\DeclareMathOperator{\curl}{curl}
\DeclareMathOperator{\Span}{span}
\DeclareMathOperator{\Ker}{Ker}
\DeclareMathOperator{\supp}{supp}
\DeclareMathOperator{\dist}{dist}
\DeclarePairedDelimiter{\IP}{\langle}{\rangle}
\DeclarePairedDelimiter{\abs}{\lvert}{\rvert}
\DeclarePairedDelimiter{\norm}{\lVert}{\rVert}
\newcommand{\C}{\mathbb C}
\newcommand{\Z}{\mathbb Z}
\newcommand{\R}{\mathbb R}
\newcommand{\N}{\mathbb N}
\newcommand{\Ab}{\mathbf A}
\newcommand{\dd}{\mathop{}\!{\mathrm{d}}}
\newcommand{\ii}{\mathrm i}
\newcommand{\ee}{\mathrm e}
\definecolor{blue(ryb)}{rgb}{0.01, 0.28, 1.0}
\definecolor{brandeisblue}{rgb}{0.0, 0.44, 1.0}
\definecolor{ceruleanblue}{rgb}{0.16, 0.32, 0.75}
\definecolor{cobalt}{rgb}{0.0, 0.28, 0.67}
\definecolor{coolblack}{rgb}{0.0, 0.18, 0.39}
\definecolor{darkblue}{rgb}{0.0, 0.0, 0.55}
\newtheorem{theorem}{Theorem}[section]
\newtheorem{proposition}[theorem]{Proposition}
\newtheorem{definition}[theorem]{Definition}
\newtheorem{corollary}[theorem]{Corollary}
\theoremstyle{remark}
\newtheorem{remark}[theorem]{Remark}
\newtheorem{assumption}[theorem]{Assumption}
\numberwithin{equation}{section}
\title[Flux effects on quantum tunneling]{Flux  and symmetry effects on quantum tunneling}
\author[B. Helffer]{Bernard Helffer}
\author[A. Kachmar]{Ayman Kachmar}
\author[M.P. Sundqvist]{Mikael Persson Sundqvist}
\address[B. Helffer]{Laboratoire de Math\'ematiques Jean Leray, CNRS,  Nantes Universit\'e, France.}
\email{bernard.helffer@univ-nantes.fr}
\address[A. Kachmar]{Department of Mathematics, Lebanese University, Nabatiyeh, Lebanon.}
\address[Current address: The Chinese University of Hong Kong (Shenzhen), School of Science and Engineering, 2001 Longxiang Blvd., Longgang District, Shenzhen, China.]{}
\email{ayman.kashmar@gmail.com}
\address[M.P. Sundqvist]{Department of Mathematics, Lund University, Sweden}
\email{mikael.persson\_sundqvist@math.lth.se}
\subjclass{81U26}
\keywords{Schrödinger operator,tunneling,magnetic field,eigenvalue splitting}
\begin{document}
\begin{abstract}
Motivated by the analysis of the tunneling effect for the magnetic Laplacian,   we introduce an abstract framework for the spectral reduction of a self-adjoint operator to a hermitian matrix. We illustrate this framework by three applications, firstly the electro-magnetic Laplacian with constant magnetic field and three equidistant potential wells, secondly a pure constant magnetic field and Neumann boundary condition in a smoothed triangle, and thirdly a magnetic step where the discontinuity line is a smoothed triangle. Flux effects are visible in the three aforementioned settings  through the occurrence of eigenvalue crossings. Moreover, in the electro-magnetic Laplacian setting with double well radial potential, we rule out an artificial condition on the distance of the wells and extend  the range of validity for the tunneling approximation recently established in~\cite{FSW, HK22}, thereby settling the problem of electro-magnetic tunneling under constant magnetic field and  a sum of translated radial electric potentials.
\end{abstract}
\maketitle

\section{Introduction}

\subsection{Tunneling and flux effects}

The tunneling  induced by symmetries is an interesting phenomenon in spectral theory featuring  an exponentially small splitting between the ground state and the next  excited state energies.  The magnetic flux has an effect on the eigenvalue multiplicity which can lead to oscillatory patterns in the spectrum: as the magnetic flux varies,  the eigenvalues may cross and split infinitely many times.  Hence it is interesting to look at the interaction between symmetry and flux effects.
We explore this question by investigating  examples of operators involving the magnetic Laplacian \((-\ii h\nabla-\Ab)^2\) perturbed in various ways by an electric potential,  a boundary condition or a magnetic field discontinuity.  We observe interesting flux effects,   manifested in endless eigenvalue crossings, when adding  symmetry assumptions on the electric potential,  the  boundary of the domain or the magnetic field discontinuity set.
  
A braid structure in the distribution of the low lying eigenvalues was predicted heuristically~\cite[Sec.~15.2.4]{FH-b} and confirmed numerically~\cite{BDMV} for the magnetic Laplacian on an equilateral triangle  with Neumann boundary condition and constant magnetic field.  We confirm this prediction by giving a proof for smoothed equilateral triangles and for other examples on the full plane (electro-magnetic Laplacian and magnetic steps).

Let us introduce a mathematical definition of (semi-classical) \emph{braid structure} of lowest eigenvalues.   Consider a family of   unbounded self-adjoint  operators \((T_h)_{h\in (0,1]}\)
on a Hilbert space \(H\).  Let us assume that,  for every \(h\in(0,1]\),  \(T_h\) is semi-bounded from below and denote by \(\lambda_1(T_h),\lambda_2(T_h),\cdots\) the  discrete eigenvalues
below the essential spectrum of \(T_h\),  counted with multiplicity.  In practical  examples,  the parameter \(h\) will be the semi-classical parameter, which tends to \(0\) and can result as the inverse of the magnetic field's intensity in problems involving  strong  magnetic fields.

\begin{definition}\label{def:BS}
The lowest eigenvalues of  \(T_h\),   \(\lambda_1(T_h)\) and \(\lambda_2(T_h)\),  are said to have a braid structure,  if 
\[
  \forall\,\epsilon>0,  ~\exists\,h_\epsilon,h'_\epsilon\in(0,\epsilon),\
  \begin{cases}
    \lambda_2(T_{h_\epsilon})-\lambda_1(T_{h_\epsilon})>0\\
    \lambda_2(T_{h'_\epsilon})-\lambda_1(T_{h'_\epsilon})=0
  \end{cases}.
\]
\end{definition}  

According to Definition 1.1, the  eigenvalues \(\lambda_1(T_h)\) and \(\lambda_2(T_h)\) exhibit infinitely many crossings and splittings as the parameter \(h\) varies in a right neighborhood of \(0\) (see Fig.~\ref{braid}). This phenomenon has  also been observed in non-simply connected domains when considering a magnetic Laplacian and assuming an Aharonov--Bohm magnetic potential. Furthermore, it has been proven in~\cite{HHHO} that these crossings and splittings occur in response to variations in the magnetic flux.

\begin{figure}[htb]
  \includegraphics[page=5]{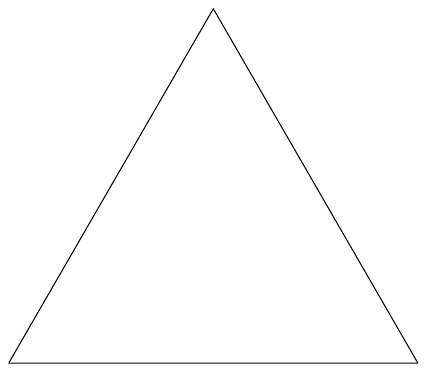}
  \caption{A schematic figure of eigenvalues with a braid structure, occurring in the presence of trilateral symmetry.   The ground state energy has multiplicity \(2\) infinitely many times.  Observe also that the energy of the second state may have multiplicity \(2\) while the ground state energy is a simple eigenvalue.}\label{braid}
\end{figure}

For the semi-classical magnetic Laplacian on a simply connected domain with Neumann boundary conditions,  the spectrum  is  related with the spectral properties of  an operator which is defined on the boundary.  Hence we actually   work on another  non-simply connected domain (i.e. the boundary)  and therefore flux effects are expected to exhibit crossings and splittings of eigenvalues. However, this is not the case when the boundary curvature has for example  a unique non-degenerate  maximum.  In this   case the eigenvalues split in the semi-classical limit~\cite{FH06}.  It is when non-degenerate minima are exchanged in the presence of symmetries (like in the case of an ellipse or a smoothed equilateral triangle),  that eigenvalue crossings are expected to occur along with tunneling effects~\cite{BHR,FH-b}. We will also prove  such a behavior for the electro-magnetic Laplacian with `potential' wells located on the vertices of an equilateral triangle,  which to our knowledge is quite novel.

\subsection{Electro-magnetic tunneling}

The analysis in this paper yields new results on the electro-magnetic Laplacian on \(\R^2\),
\[
  \mathcal L_{h,b}
  =
  (-\ii h\nabla-b\Ab)^2+V,
\]
where \(b,h\) are positive parameters,  \(\Ab=\frac12(-x_2,x_1)\) is the vector field generating the unit uniform magnetic field, \(\curl\Ab=1\), and \(V\) is a  smooth function. 

What we call the wells are the points where \(V\) attains its minimum. 
The pure electric case where $b=0$ was settled for any number of wells $n$ in~\cite{HeSj2}. We would like to address the case where \(b>0\) and \(n\geq 2\).  For $n=2$, this problem was considered in~\cite{HeSjPise} and revisited recently in~\cite{FSW, HK22}. The article  
\cite{HeSjPise} follows a perturbative approach (i.e.  considers the case $b$ relatively small) and assumes the  analyticity of the electric potential \(V\),  while the results in~\cite{FSW, HK22} hold for any $b>0$ but under the assumption that \(V\) is non-positive and  defined as a superposition of radially symmetric compactly supported functions.

\subsubsection{Double wells}

Suppose that the electric potential \(V\) is as follows
\begin{equation}\label{eq:int-V-n=2}
V(x)=v_0(|x-z_1|)+v_0(|x-z_2|)
\end{equation}
where \(v_0\in C^\infty(\overline{\R_+})\) vanishes on \([a,+\infty)\),  negative-valued on \([0,a)\) and has a unique  and non-degenerate minimum at \(0\). The wells of \(V\) are then \(z_1\) and \(z_2\).  
We prove the following theorem that, in particular, rules out eigenvalue crossings for double wells.

\begin{theorem}\label{thm:HKS1}
Assuming \(b>0\) is fixed and   \(V\) is given as in~\eqref{eq:int-V-n=2} with \(z_1=0\),  \(z_2=(L,0)\) and \(L>4a\),  then there exists a positive constant \(\mathscr E_{b,L}(v_0)\) such that
  \begin{equation}\label{eq:HKS1}
    h \ln\big(\lambda_2(\mathcal L_{h,b})-\lambda_1(\mathcal L_{h,b})\big) \underset{h\searrow0}{\sim} -  \mathscr E_{b,L}(v_0).
  \end{equation}
Moreover
  \[
    \mathscr E_{b,L}(v_0)\underset{b\searrow0}{\sim}2\int_0^{L/2}\sqrt{v_0(\rho)-v_0^{\min}}\dd \rho,
  \]
where \(v_0^{\min}=\min_{r\geq 0}v_0(r)\).
\end{theorem}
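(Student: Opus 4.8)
The plan is to combine a precise one-well WKB analysis with the paper's abstract reduction of the spectrum to a $2\times2$ hermitian matrix. First I would study the single-well operators $\mathcal L^{(j)}_{h,b}=(-\ii h\nabla-b\Ab)^2+v_0(\abs{x-z_j})$, $j\in\{1,2\}$. A magnetic translation identifies $\mathcal L^{(1)}_{h,b}$ and $\mathcal L^{(2)}_{h,b}$ unitarily, so they share a simple, isolated ground-state energy $\mu(h)=v_0^{\min}+\mathcal O(h)$, with normalized ground state $u_h^{(j)}$ localized near $z_j$. In the gauge centred at $z_j$ the operator $\mathcal L^{(j)}_{h,b}$ is rotation invariant, so its ground state lies in the zero angular-momentum sector: $u_h^{(j)}=\ee^{\ii b\sigma_j/h}\,g_h(\abs{x-z_j})$ with $\sigma_j$ a linear magnetic phase and $g_h>0$ a radial profile solving a one-dimensional semiclassical ODE with effective potential $W_b(r)=v_0(r)+\tfrac14 b^2 r^2$. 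A WKB construction then yields $g_h(r)=h^{-1/4}\bigl(a_b(r)+\mathcal O(h)\bigr)\ee^{-\Phi_b(r)/h}$, where $\Phi_b$ solves the eikonal equation $(\Phi_b')^2=W_b-\mu_0$, $\Phi_b(0)=0$, with $\mu_0=v_0^{\min}$, together with the matching Agmon decay estimates valid throughout $\R^2$; this part refines the single-well analysis of \cite{FSW, HK22}.

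Next I would apply the reduction with trial states $\psi_j$ obtained by normalizing $\chi_j u_h^{(j)}$ in $L^2$, where $\chi_j$ equals $1$ on $B(z_j,a)$ and is supported in $B(z_j,2a)$. The hypothesis $L>4a$ makes the dilated wells $B(z_j,2a)$ disjoint, which is exactly the room needed to run this construction and to keep all remainders negligible --- this is how the artificial restriction of \cite{FSW, HK22} gets removed. The reduction identifies the spectrum of $\mathcal L_{h,b}$ in an $o(h)$-window around $\mu(h)$ with that of a hermitian $2\times2$ matrix with diagonal entries $\mu(h)+r_h$, $\mu(h)+r_h'$ and off-diagonal entry $w_h$. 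The unitary equivalence of the two single-well problems, together with the antilinear symmetry of $\mathcal L_{h,b}$ given by the magnetic reflection across the perpendicular bisector of $[z_1,z_2]$ followed by complex conjugation (which exchanges the two wells), forces $r_h=r_h'$; hence $\lambda_2(\mathcal L_{h,b})-\lambda_1(\mathcal L_{h,b})=2\abs{w_h}\bigl(1+o(1)\bigr)$. In particular the splitting is strictly positive, which is why the double-well spectrum displays no eigenvalue crossing, in contrast with the trilateral situation treated elsewhere in the paper.

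Then I would compute $w_h$. From $\mathcal L_{h,b}u_h^{(2)}=\mu(h)u_h^{(2)}+v_0(\abs{x-z_1})u_h^{(2)}$, together with $\chi_2\equiv1$ on $\supp v_0(\abs{\cdot-z_2})$ and $\supp\chi_2\cap B(z_1,a)=\emptyset$, an integration by parts turns the off-diagonal entry into the potential-weighted overlap
\[
  w_h=\int_{B(z_2,a)}v_0(\abs{x-z_2})\,\overline{u_h^{(1)}(x)}\,u_h^{(2)}(x)\,\dd x+\mathcal O\bigl(\ee^{-(\mathscr E_{b,L}(v_0)+\delta)/h}\bigr),\qquad\delta>0.
\]
Inserting the WKB expansions of Step~1 and evaluating by the Laplace method --- the modulus of the integrand concentrating near the point where the segment $[z_1,z_2]$ enters $B(z_2,a)$ --- one obtains $\abs{w_h}=\ee^{-\mathscr E_{b,L}(v_0)/h+o(1/h)}$ with $\mathscr E_{b,L}(v_0)>0$; its leading part is $\min_{x\in B(z_2,a)}\bigl(\Phi_b(\abs{x})+\Phi_b(\abs{x-z_2})\bigr)=\Phi_b(a)+\Phi_b(L-a)$, to which the oscillation carried by the magnetic phases $\sigma_j$ adds a further positive contribution. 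Combined with Step~2 this gives $h\ln\bigl(\lambda_2(\mathcal L_{h,b})-\lambda_1(\mathcal L_{h,b})\bigr)=h\ln\bigl(2\abs{w_h}\bigr)+o(h)\to-\mathscr E_{b,L}(v_0)$, which is~\eqref{eq:HKS1}.

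Finally, for the behaviour as $b\searrow0$ I would track the $b$-dependence above. Since $W_b\to v_0$ locally uniformly and $\mu_0=v_0^{\min}$ does not depend on $b$, the eikonal equation degenerates to $(\Phi_0')^2=v_0-v_0^{\min}$, so $\Phi_0(r)=\int_0^r\sqrt{v_0(\rho)-v_0^{\min}}\,\dd\rho$; moreover the magnetic phases $\sigma_j$ are $\mathcal O(b)$, with $\mathcal O(b^2)$ effect on $\mathscr E_{b,L}(v_0)$. A continuity argument for the Laplace variational problem then gives $\mathscr E_{b,L}(v_0)\to\Phi_0(a)+\Phi_0(L-a)$ as $b\searrow0$, and $\Phi_0(a)+\Phi_0(L-a)=2\int_0^{L/2}\sqrt{v_0(\rho)-v_0^{\min}}\,\dd\rho$ by an elementary splitting of the integral, which is the second assertion. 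The main obstacle is Step~3: establishing the one-well WKB expansion, with its subexponential prefactor, uniformly as far as $B(z_k,a)$ with $k\neq j$ --- that is, after crossing the classically forbidden annulus of width exceeding $2a$ around $z_j$ --- and then extracting the exact exponential rate of the potential-weighted overlap in the presence of the magnetic phase, a Laplace analysis made delicate by the infinite-order vanishing of $v_0$ on $\partial B(z_k,a)$; it is here that $L>4a$ is genuinely used and that the condition of \cite{FSW, HK22} is relaxed.
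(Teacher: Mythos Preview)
Your quasi-mode construction has a fatal gap: with $\psi_j$ supported in $B(z_j,2a)$ and $L>4a$, the supports of $\psi_1$ and $\psi_2$ are disjoint, and since $T_h$ is a differential operator the off-diagonal matrix element $\langle T_h\psi_1,\psi_2\rangle$ vanishes identically. Your subsequent derivation of $w_h$ as the potential-weighted overlap cannot follow from this setup---any integration by parts applied to $T_h\psi_1$ produces terms supported in $B(z_1,2a)$, which never meet $\psi_2$. In the language of Section~\ref{sec.IM}, your $\mathsf U_h$ is exactly diagonal, so the approximation $\mathsf W_h\approx\mathsf U_h$ of Proposition~\ref{prop:int-matrix} yields no information on the splitting beyond the crude remainder $\varepsilon_h$.

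The paper takes the cutoffs \emph{large}: $\chi\equiv1$ on $D(0,L)$, supported in $D(0,L+\eta)$ (see~\eqref{eq:def-un-HKa}). Each $u_{h,j}$ then covers the other well, and $J_1(h)=\langle T_hu_{h,1},u_{h,2}\rangle$ reduces, up to a controlled remainder, to the hopping coefficient $\mathfrak c_h(v_0,L)$ of~\eqref{eq:hopping} (Proposition~\ref{prop:HK-sym}). The condition $L>4a$ is \emph{not} used to disjoin cutoff supports; it enters only through the inequality $2\hat S_a>\hat S>S(v_0,L)$ of Propositions~\ref{prop:comp-SaS0} and~\ref{prop:HK22-hopping}, which guarantees that all error terms (the overlap $\langle u_{h,1},u_{h,2}\rangle$, the diagonal $J_0(h)$, the cutoff remainders) are exponentially smaller than $|J_1(h)|$. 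This is Assumption~\ref{ass:v0L}, and it is here---not in any disjointness argument---that the paper relaxes the stronger hypothesis of~\cite{FSW}.

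You also underestimate what is needed to extract the exponential rate of the hopping integral. The naive Laplace answer on the modulus is $\Phi_b(a)+\Phi_b(L-a)$, which in the paper's notation is the constant $S_a=d(a)+d(L-a)$; but the true rate is the strictly larger $S(v_0,L)$ of~\eqref{eq:def-S(v0)**}. The gap comes from the oscillatory factor $\ee^{\ii Lx_2/2h}$, and capturing it requires the Bessel-function analysis of~\cite{HK22} summarized in Proposition~\ref{prop:HK22-hopping}, not a routine Laplace argument on the modulus. The $b\searrow0$ limit then follows from the explicit relation $\mathscr E_{b,L}(v_0)=bS(b^{-2}v_0,L)$ together with~\cite[Prop.~6.7]{HK22}.
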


\begin{remark}\label{rem:HKS1}~
\begin{enumerate}[i)]
  \item
  The asymptotics in~\eqref{eq:HKS1} was obtained earlier in~\cite{HK22} for \(b=1\) but under the assumption that
  \begin{equation}\label{assFSW}
    L > 4\Bigl(\sqrt{|v_0^{\mathrm{min}}|}+a\Bigr).
  \end{equation}
  \item 
  The use of~\eqref{assFSW} in~\cite{HK22} was technical.   In fact,  assuming~\eqref{assFSW},  it is proved in~\cite{FSW} that 
  \begin{equation}\label{eq:ho-co}
    \lambda_2(\mathcal L_{h,1})-\lambda_1(\mathcal L_{h,1})\underset{h\searrow0}{\sim}  \mathfrak c_h( v_0,L)
  \end{equation}
  where \(\mathfrak c_h(v_0,L)\) is the hopping coefficient that will be introduced in~\eqref{eq:hopping} later on.   The accurate approximation of \(\ln \mathfrak c_h(v_0,L)\) was then carried out in~\cite{HK22}.  
  \item 
  For \(b\not=1\),  by a change of semi-classical parameter,  the condition in~\eqref{assFSW} reads as follows
  \[
    L>4\Bigl(b^{-1}\sqrt{|v_0^{\mathrm{min}}|} + a\Bigr).
  \] 
  Clearly,  this is a very strong condition on \(L\) which in particular  prevents us  from  considering the limit \(b\searrow 0\).  The novelty in Theorem~\ref{thm:HKS1} is in improving the previous  condition which could appear as artificial.  However,  it is still an open question whether~\eqref{eq:HKS1} holds for \(2a<L<4a\).
  \item 
  The dependence on $b$ in the expression of \(\mathscr E_{b,L}(v_0)\) can  actually be made more explicit. We have indeed
  \[
    \mathscr E_{b,L}(v_0)=  b\, S(b^{-2}v_0,L)\,,
  \]
  where \(S(\cdot,L)\) will be introduced in~\eqref{eq:def-S(v0)**} later on.

  The leading term  of \(\mathscr E_{b,L}(v_0)\) in the limit \(b\searrow 0\) was calculated in~\cite[Prop.~6.7]{HK22},  and it is consistent with the existing results~\cite{HeSj1, Si2} without a magnetic field, \(b=0\), thereby showing a sort of continuity of the tunneling estimate with respect to the magnetic field's strength.
\end{enumerate}
\end{remark}
 
\subsubsection{Three wells and eigenvalue crossings}

Suppose now that the electric potential \(V\) is as follows
\begin{equation}\label{eq:int-V-n=3}
  V(x)=v_0(|x-z_1|)+v_0(|x-z_2|)+v_0(|x-z_3|)
\end{equation}
where \(v_0\) is the same function as in~\eqref{eq:int-V-n=3} and that the wells \(z_1,z_2,z_3\) are located on the vertices of an equilateral triangle with side length \(L\).  We prove then the existence of a braid structure in the sense of Definition~\ref{def:BS}.

\begin{theorem}\label{thm:HKS2}
Assuming \(b>0\) is fixed and   \(V\) is given as in~\eqref{eq:int-V-n=3} with 
\[
  |z_1-z_2|=|z_2-z_3|=|z_3-z_1|=L>4a,
\] 
then the lowest egigenvalues of \(\mathcal L_{h,b}\) has a braid structure.  Moreover,
\[
  \limsup_{h\searrow 0} \Bigl(h \ln\big(\lambda_2(\mathcal L_{h,b})-\lambda_1(\mathcal L_{h,b})\big)\Bigr) = - \mathscr E_{b,L}(v_0)
\]
with \(\mathscr E_{b,L}(v_0)\) the same negative quantity as in Theorem~\ref{thm:HKS1}.
\end{theorem}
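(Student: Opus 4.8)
The plan is to run the abstract spectral reduction of the earlier sections with the three one‑well quasimodes attached to $z_1,z_2,z_3$ and to exploit the threefold symmetry of the configuration to put the resulting $3\times3$ matrix in circulant form. Let $\mathcal L_{h,b}^{(1)}$ be the operator obtained from $\mathcal L_{h,b}$ by keeping only the term $v_0(|x-z_1|)$; under the hypothesis on $v_0$ it has a simple ground state energy $\mu(h)$, separated from the rest of its spectrum by a gap of order $h$, with a normalized ground state $\psi_1$ concentrated (in the WKB sense) at $z_1$. Choosing coordinates and the symmetric gauge so that the barycentre of the triangle is the origin, the magnetic rotation $R$ by $2\pi/3$ has order three, commutes with $\mathcal L_{h,b}$, and intertwines the three one‑well operators; put $\psi_2=R\psi_1$, $\psi_3=R^2\psi_1$. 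Feeding $(\psi_1,\psi_2,\psi_3)$ into the reduction scheme identifies the three lowest eigenvalues of $\mathcal L_{h,b}$ — which are $O(e^{-c/h})$‑close to $\mu(h)$ and separated from the remaining spectrum by a gap of order $h$ — with those of a Hermitian matrix $\mu(h)I+W_h$, up to an error negligible compared with the off‑diagonal entries of $W_h$.

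The structural point is that $R^3=\mathrm{Id}$ together with $R\psi_1=\psi_2$, $R\psi_2=\psi_3$, $R\psi_3=\psi_1$ forces $W_h$ to be Hermitian circulant, $W_h=w_hC+\overline{w_h}\,C^2$ with $C$ the cyclic shift and $w_h$ the (common) off‑diagonal interaction coefficient, so its eigenvalues are $2|w_h|\cos\!\big(\beta(h)+\tfrac{2\pi j}{3}\big)$, $j=0,1,2$, with $\beta(h)=\arg w_h$; these are exactly the ground energies of $\mathcal L_{h,b}$ on the three spectral subspaces of $R$. Two analytic inputs are needed. First, by the rotation symmetry the three edge interaction phases all equal $\beta(h)$, so their sum $3\beta(h)$ is gauge invariant and equals, modulo $2\pi$ and up to a bounded WKB correction, the magnetic flux of the triangle divided by $h$, i.e. $3\beta(h)=\Phi/h+O(1)$ with $\Phi=\tfrac{\sqrt3}{4}bL^{2}>0$; in particular $\beta(h)\to\infty$ and $\beta(h)\bmod\tfrac{2\pi}{3}$ covers the circle infinitely often as $h\searrow0$. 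Second, $h\ln|w_h|\to-\mathscr E_{b,L}(v_0)$: this is the content of Theorem~\ref{thm:HKS1} and its proof (which yield the asymptotics of $\ln|\mathfrak c_h(v_0,L)|$ for the two‑well problem at distance $L$), once one checks that the third well does not affect the leading behaviour of the $z_1$–$z_2$ interaction.

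Granting this, let $\mu_1(h)\le\mu_2(h)\le\mu_3(h)$ be the three lowest eigenvalues of $\mathcal L_{h,b}$, so $\{\mu_j(h)\}=\{\mu(h)+2|w_h|\cos(\beta(h)+\tfrac{2\pi j}{3})\}+o(|w_h|)$. Put $g(\beta)=(\text{second smallest})-(\text{smallest})$ of $\{\cos(\beta+\tfrac{2\pi j}{3})\}_{j=0,1,2}$; one checks that $g\ge0$, that $g(\beta)=0$ iff $\beta\in\tfrac{2\pi}{3}\mathbb Z$, and that $\max g=\tfrac32$, attained on $\beta\in\tfrac{\pi}{3}+\tfrac{2\pi}{3}\mathbb Z$. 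When $\beta(h)$ is near $\tfrac{\pi}{3}$ modulo $\tfrac{2\pi}{3}$ the three effective levels are pairwise distinct with differences of order $|w_h|$, hence $\mu_1(h)<\mu_2(h)$; when $\beta(h)$ crosses a point of $\tfrac{2\pi}{3}\mathbb Z$ the $R$‑sector carrying the ground state switches while the third sector stays strictly above, and since the two competing sector ground energies depend continuously on $h$ there is a nearby $h$ with $\mu_1(h)=\mu_2(h)<\mu_3(h)$. As $\beta(h)$ runs over every residue modulo $\tfrac{2\pi}{3}$ infinitely often as $h\searrow0$, both alternatives occur along sequences tending to $0$, which is the braid structure of Definition~\ref{def:BS}. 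Finally $\mu_2(h)-\mu_1(h)=2|w_h|\,g(\beta(h))+o(|w_h|)$, so whenever the gap is positive one has $h\ln(\mu_2-\mu_1)\le h\ln|w_h|+O(h)$, while along a sequence with $g(\beta(h))\to\tfrac32$ one gets $h\ln(\mu_2-\mu_1)\to-\mathscr E_{b,L}(v_0)$; together with $h\ln|w_h|\to-\mathscr E_{b,L}(v_0)$ this yields $\limsup_{h\searrow0}h\ln(\mu_2-\mu_1)=-\mathscr E_{b,L}(v_0)$.

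The delicate point is the comparison in the second analytic input: that the third well does not alter the leading exponential behaviour of the $z_1$–$z_2$ interaction coefficient. This is where $L>4a$ is used — one must check that the Agmon/Finsler geodesic from $z_1$ to $z_2$ for the distance attached to $\mathcal L_{h,b}$ stays away from the ball $B_a(z_3)$, so that the geodesic and the associated WKB interaction integral agree with those of the two‑well problem, and that the route through $z_3$, of length at least twice the common pairwise geodesic distance, is exponentially smaller. By contrast, the circulant structure of $W_h$ and the intermediate‑value argument producing the crossings are soft once the one‑well analysis and the abstract reduction are available.
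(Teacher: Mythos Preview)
Your approach is essentially the paper's own: reduce via the abstract framework and the $\mathbb Z_3$-symmetry to a Hermitian circulant $3\times3$ matrix, compute its eigenvalues as $I_0+2|I_1|\cos\bigl(\theta(h)+\tfrac{2\pi j}{3}\bigr)$, identify $|I_1|\sim|\mathfrak c_h(v_0,L)|$ and $\theta(h)$ with the flux-driven phase $\phi_3/(2h)$, and read off both the braid structure and the $\limsup$ from the oscillation of $\theta(h)$ (the paper's Theorem~\ref{thm:HK-3wells}). One clarification on your ``delicate point'': the paper does not invoke an Agmon/Finsler geodesic argument but rather estimates the third-well contribution to $J_1(h)$ directly as $\mathcal O(e^{-2\hat S_a/h})$ in Proposition~\ref{prop:HK-sym} and then checks $2\hat S_a>S(v_0,L)$ under $L>4a$ via Proposition~\ref{prop:comp-SaS0}, which amounts to the same conclusion as your geodesic heuristic.
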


Not only Theorem~\ref{thm:HKS2} establishes the existence of infinitely many eigenvalue crossings and splittings,  but it also establishes  an accurate estimate for the magnetic tunneling induced by three symmetric potential wells,  thereby extending the recent results of~\cite{FSW, HK22} on double wells.  

\subsection{Geometrically induced braid structure}

We present here results on the pure magnetic Laplacian where the eigenvalue crossings are induced by a combination of the geometry and the flux in the semi-classical limit. We shall describe the results when \(\Omega\) is a smoothed triangle  (see Fig~\ref{fig1-intro}).  That is,  \(\Omega\) is a simply connected domain,  invariant under  rotation  by \(2\pi/3\),  with three points of maximum curvature that are equidistant with respect to the arc-length distance on the boundary.  

\begin{figure}[h]
  \includegraphics[page=3]{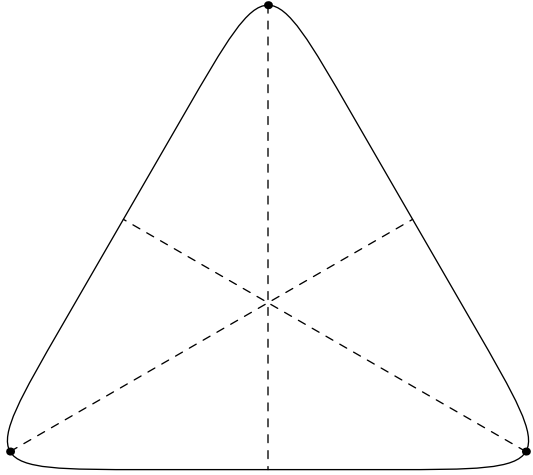}
  \caption{Illustration of the domain \(\Omega\): a smoothed triangle invariant under the rotation by \(2\pi/3\).}
  \label{fig1-intro}
\end{figure}

\subsubsection{The magnetic Neumann Laplacian under constant magnetic field}

In the Hilbert space  \(L^2(\Omega)\),  we consider the magnetic Neumann Laplacian \(\mathscr L_h^N=(-\ii h\nabla-\Ab)^2\) on \(\Omega\),  with uniform  magnetic field \(\curl\Ab=1\) and  (magnetic) Neumann condition
\[
  \nu\cdot (-\ii h\nabla-\Ab)u|_{\Gamma}=0.
\]

\begin{theorem}\label{thm:HKS3}
Let the domain \(\Omega\) be a smoothed triangle,  invariant under the rotation by \(2\pi/3\)  (see Fig.~\ref{fig1-intro}). The lowest eigenvalues of the operator \(\mathscr L_h^N\) have a braid structure.
\end{theorem}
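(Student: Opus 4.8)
The plan is to reduce the semi-classical spectral analysis of $\mathscr L_h^N$ near the bottom of its spectrum to a $3\times 3$ hermitian matrix model, exploiting both the $\Z/3\Z$ rotational symmetry of $\Omega$ and the known concentration of low-lying eigenfunctions near the boundary points of maximal curvature. First I would recall the boundary-concentration picture for the magnetic Neumann Laplacian: as $h\searrow 0$, the first eigenvalues behave like $\Theta_0 h - C_1 \kappa_{\max} h^{3/2} + o(h^{3/2})$, where $\Theta_0$ is the de Gennes constant, $C_1>0$ and $\kappa_{\max}$ is the maximal curvature of $\Gamma$; the associated eigenfunctions localize, in the arc-length variable, near the three equidistant points $p_1,p_2,p_3$ where this maximum is attained (cf.~\cite{FH06, FH-b, BHR}). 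Around each $p_j$ one builds a quasimode $u_j$ by importing the model boundary state (a harmonic-oscillator-type profile in the normal variable times a Gaussian in the tangential variable), cut off to a neighborhood of $p_j$; the symmetry $g$ (rotation by $2\pi/3$, suitably gauge-transformed so that it commutes with $\mathscr L_h^N$) permutes these quasimodes cyclically up to a phase, $g u_j = e^{\ii\theta_h} u_{j+1}$.

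Next I would set up the interaction matrix. Letting $W_h = \Span(u_1,u_2,u_3)$ be the approximate eigenspace and $M_h$ the hermitian $3\times 3$ matrix representing $\mathscr L_h^N$ restricted and compressed to $W_h$ (after orthonormalization, which costs only exponentially small corrections), the circulant structure forced by the symmetry gives
\begin{equation}\label{eq:circ-HKS3}
  M_h = \alpha_h I + \beta_h \begin{pmatrix} 0 & 1 & \bar\omega_h \\ \bar\omega_h & 0 & 1 \\ 1 & \bar\omega_h & 0 \end{pmatrix} + \overline{\beta_h}\begin{pmatrix} 0 & \omega_h & 1 \\ 1 & 0 & \omega_h \\ \omega_h & 1 & 0 \end{pmatrix},
\end{equation}
where $\alpha_h\in\R$ is the diagonal (self) energy, $\beta_h\in\C$ is the nearest-neighbor hopping coefficient (exponentially small in $1/h$, roughly $e^{-S/\sqrt h}$ with $S$ an Agmon-type distance between consecutive $p_j$'s along the boundary), and $\omega_h = e^{\ii\Phi_h}$ encodes the magnetic flux accumulated around the triangle — this is the crucial flux-dependent phase, varying through many periods as $h\searrow 0$. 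Diagonalizing the circulant matrix, its eigenvalues are $\alpha_h + 2\,\mathrm{Re}\big(\beta_h e^{\ii(2\pi k/3 + \Phi_h')}\big)$, $k=0,1,2$, for an appropriate composite phase $\Phi_h'$; the gap $\lambda_2-\lambda_1$ is then governed by $|\beta_h|$ times a bounded trigonometric factor in $\Phi_h'$ that vanishes exactly when two of the three values $\{2\pi k/3 + \Phi_h'\}$ are reflections of each other across the real axis, i.e.\ at a discrete $h$-sequence, and is strictly positive on a complementary $h$-sequence. Producing those two interlacing sequences $h_\epsilon, h_\epsilon'$ verifies Definition~\ref{def:BS}, provided one controls that $\Phi_h'$ genuinely sweeps out all of $\R/2\pi\Z$ (or at least infinitely many full turns) as $h\to 0$ — which follows from the magnetic flux contribution being of order $h^{-1}$ times a fixed nonzero constant, while the error terms in $\Phi_h'$ are $o(1)$.

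The main obstacle, I expect, is the rigorous justification that the true bottom of the spectrum is faithfully captured by the $3\times 3$ model~\eqref{eq:circ-HKS3} \emph{with errors small compared to the hopping term $|\beta_h|$}, not merely small compared to the spectral gap $\sim h^{3/2}$ to the next cluster. This requires: (a) sharp exponential decay estimates (Agmon estimates in the boundary arc-length variable, in the regime where the effective potential is the curvature) to show the single-well quasimodes are accurate to $O(e^{-(S-\delta)/\sqrt h})$ for every $\delta>0$; (b) a precise computation of the hopping coefficient $\beta_h$ via a Green/WKB-type formula localized on the boundary, so that $\sqrt h \ln|\beta_h| \to -S$ and in particular $\beta_h \neq 0$; and (c) tracking the magnetic phase through the gauge transformations relating the local charts at $p_1,p_2,p_3$, to identify $\Phi_h$ as $h^{-1}$ times (essentially) the magnetic flux through $\Omega$ modulo boundary corrections. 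Steps (a) and (c) are by now standard in the magnetic semi-classical literature (adapting \cite{FH06, BHR, HK22} and the abstract reduction framework of this paper), so the genuinely delicate point is (b): obtaining a two-term asymptotics of $\beta_h$ precise enough to guarantee non-vanishing and to pin down its phase, which is where the abstract spectral-reduction machinery introduced in the earlier sections of the paper does the heavy lifting.
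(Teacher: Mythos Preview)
Your overall strategy coincides with the paper's: construct single-curvature-well quasi-modes, use the $\Z/3\Z$ symmetry to reduce to a circulant $3\times3$ interaction matrix, and extract the braid structure from the oscillating phase of the hopping entry, importing the tunneling computation from~\cite{BHR}. Two quantitative points, however, are wrong and would derail the execution.

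First, your semiclassical scales are off by one boundary-reduction. After passing to tubular coordinates and rescaling the normal variable by $h^{1/2}$, the effective parameter becomes $\hbar=h^{1/2}$ (see \eqref{eq:tilde-Nh}--\eqref{eq:tilde-Nh-bar}); the tangential Agmon weight then lives at scale $\hbar^{1/2}=h^{1/4}$. Consequently the hopping term decays like $\ee^{-\mathsf S_3/h^{1/4}}$, not $\ee^{-S/\sqrt h}$, and its phase oscillates at rate $h^{-1/2}$, not $h^{-1}$ (see \eqref{eq:N-J1}). Your step~(a) must deliver tangential Agmon estimates at this $h^{1/4}$ scale; if you formulate them at scale $\sqrt h$ as written, they are simply false, and the abstract error control of Section~\ref{sec.IM} cannot be calibrated.

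Second, the oscillating phase is not governed by the bare flux through $\Omega$. The relevant constant is $\Phi_0=|\Omega|-2L\xi_0$ (equation~\eqref{eq:def-flux-Om}), where the term $-2L\xi_0=2L\sqrt{\Theta_0}$ comes from the $\ee^{\ii\sigma\xi_0/\hbar}$ factor in the boundary WKB ansatz \eqref{eq:N-WKB-c} and is of the \emph{same order} as $|\Omega|$; your ``modulo boundary corrections'' is hiding a leading-order contribution, and step~(c) as stated would produce the wrong phase. Finally, with the rotation-invariant gauge $\Ab=\tfrac12(-x_2,x_1)$ the operator $\mathscr L_h^N$ commutes with $M(g_3)$ on the nose---no gauge twist or extra factor $\ee^{\ii\theta_h}$ is needed in $u_{j+1}=M(g_3)u_j$---so the entire flux phase sits inside the single coefficient $I_1(h)$ as in \eqref{eq:Mh-exp}--\eqref{eq:formn=3}, rather than being distributed through separate $\omega_h$ factors as in your displayed matrix.
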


The presence of the Neumann boundary condition plays a vital role in the preceding theorem. This condition is responsible for the semi-classical localization of the bound states near the boundary of \(\Omega\) and this  is this localization that gives rise to the observed flux effects.

\subsubsection{The Landau Hamiltonian under a magnetic step}

We consider here the Landau Hamiltonian  \(\mathscr L_h^B=(-\ii h\nabla-\Ab)^2\) on \(\R^2\)
with the discontinuous magnetic field
\[
  B
  =\curl\Ab
  =
  \begin{cases}
    1         &\text{ on }\Omega,\\
    \vartheta\in(-1,0)&\text{ on }\R^2\setminus\overline\Omega.
  \end{cases}
\]
Such magnetic fields have been called `magnetic steps' in the literature, and the semi-classical limit for the operator \(\mathcal L_h^B\) has been  studied recently in~\cite{AHK, FHK} (and in~\cite{FHKR} for \(\vartheta =-1\)). The bound states of the system become increasingly concentrated along the discontinuity of $B$ in the semi-classical limit. Consequently, we expect the emergence of flux effects, and they are indeed realized when \(\Omega\) is a smoothed triangle with  $3$-fold  symmetry.

\begin{theorem}\label{thm:HKS4}
Assume that \(\Omega\) is a smoothed  triangle,  invariant under the rotation by \(2\pi/3\) (see Fig.~\ref{fig1-intro}). The lowest eigenvalues of the operator  \(\mathscr L_h^B\) have in the semi-classical limit a braid structure.
\end{theorem}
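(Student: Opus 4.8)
\textbf{Proof proposal for Theorem~\ref{thm:HKS4}.}

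The plan is to combine the semiclassical reduction of $\mathscr L_h^B$ near the discontinuity curve $\Gamma=\partial\Omega$ with a magnetic tunneling analysis between three symmetric wells, and then to read off the braid structure from the spectrum of a $3\times3$ circulant matrix whose off-diagonal phase winds as $h\searrow0$. The rotation invariance will enter twice: it pins the matrix to be circulant, and it splits the Hilbert space into non-interacting sectors, which is what turns an \emph{approximate} crossing into an \emph{exact} one.

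\emph{Reduction to an effective one-dimensional operator.} By the semiclassical analysis of magnetic steps in \cite{AHK, FHK}, the eigenvalues of $\mathscr L_h^B$ below the essential spectrum are, up to a negligible error, those of an effective one-dimensional operator on $\Gamma\simeq\R/(|\Gamma|\Z)$ of Schr\"odinger type, whose potential is a function of the curvature of $\Gamma$ and attains its minimum exactly at the three equidistant points of maximal curvature. Since $\Omega$ — hence $B$ and $\mathscr L_h^B$ — is invariant under the rotation $\mathbf R$ by $2\pi/3$, this effective potential has three identical nondegenerate wells permuted cyclically by $\mathbf R$. I would then construct $\mathbf R$-covariant WKB/Agmon quasimodes $\psi_1,\psi_2,\psi_3$ localized at the three wells, with $\psi_{j+1}=U\psi_j$, where $U$ is a magnetic rotation commuting with $\mathscr L_h^B$ and with $U^3$ a scalar multiple of the identity; feeding the span of the $\psi_j$ into the abstract reduction framework produces a $3\times3$ Hermitian matrix $M_h$ whose eigenvalues approximate $\lambda_1(\mathscr L_h^B)\le\lambda_2(\mathscr L_h^B)\le\lambda_3(\mathscr L_h^B)$ with error $o(|w(h)|)$, where $w(h)=|w(h)|\ee^{\ii\Psi(h)}$ is the hopping coefficient between two neighbouring wells, $|w(h)|\sim c_0h^{N}\ee^{-S_0/h}>0$.

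\emph{The flux-twisted matrix and the braid structure.} The $\mathbf R$-invariance forces $M_h=E_0(h)\,I+W_h$ with $W_h$ circulant up to a flux boundary condition: all diagonal entries are equal, all cyclic off-diagonal entries have modulus $|w(h)|$, and their product around the cycle equals $\ee^{\ii\Phi(h)}$, where $\Phi(h)=\tfrac1h\oint_\Gamma\Ab\cdot\dd\ell+O(1)=\tfrac{|\Omega|}{h}+O(1)$ is the flux of $\curl\Ab$ enclosed by the tunneling loop around $\Gamma$. Hence the eigenvalues of $M_h$ are $E_0(h)+2|w(h)|\cos\!\big(\Psi(h)+\tfrac{2\pi k}{3}\big)+o(|w(h)|)$ for $k\in\{0,1,2\}$, with $3\Psi(h)\equiv\Phi(h)\pmod{2\pi}$, so that $\Psi(h)=\tfrac{|\Omega|}{3h}+O(1)$ is continuous and runs through every residue modulo $2\pi$ infinitely often as $h\searrow0$. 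Now decompose $L^2(\R^2)=\mathcal H_0\oplus\mathcal H_1\oplus\mathcal H_2$ into the eigenspaces of $U$; since these are $\mathscr L_h^B$-invariant and mutually non-interacting, the bottom eigenvalue $\mu_k(h)$ of $\mathscr L_h^B|_{\mathcal H_k}$ is a genuine continuous function of $h$, equal by the reduction to $E_0(h)+2|w(h)|\cos(\Psi(h)+2\pi k/3)+o(|w(h)|)$. For $h$ with $\Psi(h)\equiv\pi/3\pmod{2\pi}$ one gets $\mu_1(h)<\min(\mu_0(h),\mu_2(h))$ with a gap $\ge c\,|w(h)|>0$ for a constant $c>0$, hence $\lambda_2(\mathscr L_h^B)-\lambda_1(\mathscr L_h^B)>0$; such $h$ occur in every $(0,\epsilon)$. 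On the other hand $\mu_1(h)-\mu_2(h)=-2\sqrt3\,|w(h)|\sin\Psi(h)+o(|w(h)|)$ is, on a short interval over which $\Psi$ increases through a multiple of $2\pi$, positive at one end and negative at the other once $h$ is small enough to beat the error term; the intermediate value theorem then provides $h'$ in that interval with $\mu_1(h')=\mu_2(h')$, and choosing the interval around $\Psi\equiv0\pmod{2\pi}$ guarantees in addition $\mu_0(h')>\mu_1(h')=\mu_2(h')$. Then $\lambda_1(\mathscr L_{h'}^B)=\lambda_2(\mathscr L_{h'}^B)$ — one eigenfunction in $\mathcal H_1$, one in $\mathcal H_2$ — and such $h'$ also occur in every $(0,\epsilon)$. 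This is precisely Definition~\ref{def:BS}.

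\emph{Main obstacle.} The genuinely hard part is the tunneling analysis along the discontinuity curve: constructing quasimodes adapted to the magnetic step, evaluating the hopping coefficient $w(h)$ together with its geometric (Aharonov--Bohm) phase $\Psi(h)$, and making the reduction error honestly $o(|w(h)|)$, with the circulant structure and the equality of all diagonal entries of $M_h$ harvested from the exact $\mathbf R$-symmetry. By comparison, the passage from $M_h$ to the braid structure is soft: the crossings come from the intermediate value theorem applied to $\mu_1(h)-\mu_2(h)$ across the non-interacting symmetry sectors $\mathcal H_1,\mathcal H_2$, and no symmetry beyond the rotation $\mathbf R$ is needed.
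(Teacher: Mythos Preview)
Your strategy is the same as the paper's: reduce to an effective boundary problem with three symmetric curvature wells, build rotation-covariant quasimodes, get a $3\times3$ circulant interaction matrix, and extract crossings from the winding phase via the intermediate value theorem. Your extraction of \emph{exact} crossings through the $U$-eigenspace decomposition $\mathcal H_0\oplus\mathcal H_1\oplus\mathcal H_2$ is equivalent to what the paper does implicitly: in the paper's framework the eigenvectors $\mathbf e_k$ of the exact circulant matrix $\mathsf W_h$ are precisely the symmetry-adapted combinations, so the labelled eigenvalues $\mu_k(h)$ are already the bottom eigenvalues in each sector, and the paper applies the IVT to the exact argument $\theta(h)$ of the matrix entry $I_1(h)$ rather than to $\mu_1-\mu_2$.

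Where your proposal is off is in the quantitative asymptotics specific to the magnetic-step model. After passing to tubular coordinates and rescaling, the effective semiclassical parameter is $\hbar=h^{1/2}$, so the hopping coefficient decays like $h^{13/8}\ee^{-\mathsf S_3(\vartheta)/h^{1/4}}$ (not $\ee^{-S_0/h}$), and its phase winds like $2\Phi_\vartheta/(3h^{1/2})$ (not $|\Omega|/(3h)$). Moreover the relevant ``flux'' is not the naive $\oint_\Gamma\Ab\cdot\dd\ell=|\Omega|$ but the shifted quantity $\Phi_\vartheta=|\Omega|-2L\zeta_\vartheta$, where $\zeta_\vartheta$ is the minimizer of the one-dimensional band function $\mu_\vartheta$; this shift comes from the WKB phase $\ee^{\ii\sigma\zeta_\vartheta/\hbar}$ in the single-well ground state, not from the line integral of $\Ab$. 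None of this affects the braid-structure conclusion, since all you need is that the phase winds through every residue class infinitely often and that $|w(h)|>0$, but the formulas you wrote should be corrected.
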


We therefore have an example where the eigenvalues of the  Landau Hamiltonian on the full plane cross and split infinitely many times.  This is the consequence of having a sign changing magnetic field with a discontinuity along a simple smooth curve.  

Recently, an estimate for the  tunneling induced by  a smooth magnetic field that vanishes non-degenerately along a smooth curve has been established in~\cite{Al}. It  seems  natural to expect the existence of a braid structure in that setting too when adding symmetry assumptions.

\subsection{Organization}

The paper is organized as follows.  Since we work with various symmetry configurations, Section~\ref{sec.IM} is devoted to an abstract spectral reduction to a hermitian matrix (so often called the interaction matrix in the literature on tunneling effects~\cite{HeSj1, He88}).  This provides us with a robust methodology when analyzing tunneling effects in various settings. Loosely speaking,  all we need is the construction of adequate quasi-modes.

In Section~\ref{sec.HK22} we discuss the electro-magnetic Laplacian.  Our investigation has two ingredients,  the first is to appply the abstract methodology in Section~\ref{sec.IM},  and the second is to control  the errors produced by the interaction terms; the later task is achieved by using the analysis  in the recent work~\cite{HK22}.  Section~\ref{sec.HK22} concludes with the proofs of Theorems~\ref{thm:HKS1} and \ref{thm:HKS2}.

In Section~\ref{sec.Neumann},  we prove Theorem~\ref{thm:HKS3} by applying the results of Section~\ref{sec.IM}. The control of the error terms and the computation of the interaction term were done in~\cite{BHR}.

Finally,  in Section~\ref{sec.edge},  we prove Theorem~\ref{thm:HKS4} by applying the constructions in Section~\ref{sec.IM}.  We will be more succinct here since the analysis is similar to Section~\ref{sec.Neumann}.  The control of the error terms and the asymptotics of the  interaction terms were done  in~\cite{FHK} and are actually rather close to those in~\cite{BHR}.

\section{Abstract framework for a spectral reduction to a  hermitian matrix }\label{sec.IM}

In this section we consider a family of operators dependent on a positive semi-classical parameter \(h\ll 1\).  Assuming the existence of certain quasi-modes (see Assumption~\ref{ass1} below),  we can approximate the eigenvalues of the operator with those of the matrix of its restriction on a specific basis (Proposition~\ref{prop:int-matrix} below).  Assuming an additional symmetry hypothesis (Assumption~\ref{ass:sym} below) we can relabel the eigenvalues and spot their possible crossings and splittings as the semi-classical parameter approaches \(0\) (Eqs.~\eqref{eq:formn=2},  \eqref{eq:formn=3} and Paragraphs~\ref{sec:2-wells}, \ref{sec:3-wells}).  

\subsection{Preliminaries}

Consider a Hilbert space \(H\) endowed with an inner product \(\langle \cdot,\cdot\rangle\) and a family of self-adjoint unbounded operators \(T_h\colon D_h\to H\), \(h\in(0,1]\). Assume furthermore that, for every \(h\in(0,1]\),  \(T_h\) is semi-bounded from below and has a sequence of discrete eigenvalues
\[
  \lambda_1(h)
  \leq 
  \lambda_2(h)
  \leq
  \lambda_3(h) 
  \leq
  \ldots
  <
  \Sigma_h \coloneqq  \inf \sigma_{\mathrm{ess}}(T_h) \in \R \cup \{+\infty\},
\]
counted with multiplicity. By the min-max principle the eigenvalues can be represented as
\[
  \lambda_n(h)
  =
  \inf _ {\substack{M\subset D_h\\ \dim(M)=n}}
  \left(
    \max_{\substack{\phi\in M\\ \|\phi\|=1}}
    \langle T_h\phi,\phi\rangle
  \right).
\]
We will work under  additional assumptions on the operators \((T_h)_{h\in(0,1]}\).

\begin{assumption}[\(n\) wells]\label{ass1}
  Let \(n\geq 2\) be an integer. There exist positive constants $ \mathfrak S_1,\mathfrak S_2,\mathfrak S_3, c,p,q$ and $h_0 \in (0,1]$ such that $p<q$ and for all \(h\in(0,h_0]\), there exists a subspace \(E_h= \Span (u_{h,1},\ldots ,u_{h,n})\subset D_h\) such that:
  \begin{enumerate}
  \item 
  $\max_{1\leq i \leq n}  \| T_h u_{h,i} \| =\mathcal O(\ee^{-\mathfrak S_1/h}).$
  \item 
  \(
    \langle u_{h,i},u_{h,j} \rangle =
    \begin{cases}
      1+\mathcal O(\ee^{-\mathfrak S_2/h}) & i = j, \\
      \mathcal O(\ee^{-\mathfrak S_3/h})   & i \neq j.
    \end{cases}
  \)
  \item
  \( \lambda_1(h)\geq -c h^{q} \).
  \item  
  \( \lambda_{n+1}(h) \geq c\,h^{p} \).
\end{enumerate}
\end{assumption}

It results from (2) above that \(\dim(E_h)=n\) for \(h\) small enough. As a consequence of Assumption~\ref{ass1}, we now prove that the operator \(T_h\) has precisely \(n\) eigenvalues that are \emph{exponentially small} in \(h\),  and  there is a gap to \(\lambda_{n+1}(h)\), since it is at least of polynomial size in \(h\).

\begin{proposition}\label{prop:ev-bounds}
  Under Assumption~\ref{ass1},  there exist positive constants \(C,h_1\) such that,  for $h\in (0,h_1]$,
  \begin{equation}\label{eq:lambdan} 
     \lambda_n(h)\leq C e^{-\mathfrak{S}_1/h}.
  \end{equation}
  In particular \(\lambda_n(h)<\lambda_{n+1}(h)\) for \(h\) sufficiently small.
\end{proposition}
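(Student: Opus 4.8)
The plan is to bound \(\lambda_n(h)\) from above by the min-max principle, using as test space the \(n\)-dimensional subspace \(E_h=\Span(u_{h,1},\dots,u_{h,n})\) provided by Assumption~\ref{ass1}, and then to compare the resulting exponentially small bound with the polynomial lower bound on \(\lambda_{n+1}(h)\) in item~(4). Only items~(1), (2) and (4) are needed here; item~(3) enters later, in the reduction to the interaction matrix.

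First I would record that the Gram matrix \(G_h=\big(\langle u_{h,i},u_{h,j}\rangle\big)_{1\le i,j\le n}\) satisfies \(G_h=I_n+\mathcal O(\ee^{-\mathfrak S/h})\) in operator norm, with \(\mathfrak S=\min(\mathfrak S_2,\mathfrak S_3)\), by item~(2). Hence for \(h\) small \(G_h\) is positive definite with smallest eigenvalue \(\ge 1/2\); in particular the \(u_{h,i}\) are linearly independent, so \(\dim E_h=n\), and for \(\phi=\sum_{i=1}^n\alpha_i u_{h,i}\in E_h\) one has \(\|\phi\|^2=\langle G_h\alpha,\alpha\rangle_{\C^n}\ge\tfrac12|\alpha|^2\), so that \(\|\phi\|=1\) forces \(|\alpha|^2\le 2\). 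Next I would estimate the quadratic form on \(E_h\): expanding and applying Cauchy--Schwarz together with items~(1)--(2) gives \(|\langle T_h u_{h,i},u_{h,j}\rangle|\le \|T_h u_{h,i}\|\,\|u_{h,j}\|\le C'\ee^{-\mathfrak S_1/h}\) for \(h\) small, whence
\[
  |\langle T_h\phi,\phi\rangle|
  \le C'\ee^{-\mathfrak S_1/h}\Big(\sum_{i=1}^n|\alpha_i|\Big)^2
  \le C'n\,\ee^{-\mathfrak S_1/h}\,|\alpha|^2
  \le 2C'n\,\ee^{-\mathfrak S_1/h}.
\]
By the min-max principle, \(\lambda_n(h)\le \sup_{\phi\in E_h\setminus\{0\}}\langle T_h\phi,\phi\rangle/\|\phi\|^2\le 2C'n\,\ee^{-\mathfrak S_1/h}\), which is~\eqref{eq:lambdan} with \(C=2C'n\).

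For the last assertion, item~(4) gives \(\lambda_{n+1}(h)\ge c h^{p}\); since \(\ee^{-\mathfrak S_1/h}=o(h^{p})\) as \(h\searrow0\), we get \(C\ee^{-\mathfrak S_1/h}<c h^{p}\le\lambda_{n+1}(h)\) for \(h\) small, and combining with~\eqref{eq:lambdan} yields \(\lambda_n(h)<\lambda_{n+1}(h)\); since always \(\lambda_{n+1}(h)\le\Sigma_h\), this also shows \(\lambda_n(h)<\Sigma_h\), i.e.\ \(\lambda_n(h)\) is a genuine discrete eigenvalue. There is no serious obstacle in this proof; the one point that needs a little care is the invertibility of \(G_h\) and the control of \(|\alpha|\) by \(\|\phi\|\), which is precisely where the exponential smallness of the off-diagonal overlaps in item~(2) is used to guarantee \(\dim E_h=n\) and a uniform bound \(\|G_h^{-1}\|\le 2\).
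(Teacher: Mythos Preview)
Your proof is correct and follows essentially the same route as the paper: use \(E_h\) as the test space in the min--max principle, bound \(\langle T_h\phi,\phi\rangle\) by Cauchy--Schwarz via item~(1), bound \(\|\phi\|^2\) from below via item~(2), and compare with the polynomial lower bound from item~(4). Your presentation is in fact slightly more explicit than the paper's, since you spell out the control of \(|\alpha|\) through the Gram matrix \(G_h=I_n+\mathcal O(\ee^{-\mathfrak S/h})\), whereas the paper leaves the normalization of \(\alpha\) implicit.
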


\begin{proof}
Since \( \dim(E_h)=n \), we can use the min-max principle. Let \( \phi = \sum_{j}\alpha_ju_{h,j} \) be in \(E_h\). Then, the triangle inequality and Cauchy--Schwarz inequality, together with (1) and (2) from Assumption~\ref{ass1}, provide the existence of constants \(A_1\) and \(A_2\), such that if \(h\) is small enough,
\[
  \IP{T_h\phi,\phi}
  \leq 
  \sum_{j,k} \abs{\alpha_j}\cdot \abs{\alpha_k} \cdot \norm{ T_h u_{h,j} } \cdot \norm{u_{h,k}}
  \leq
  A_1 \ee^{-\mathfrak{S}_1/h}(1+A_2\ee^{-\mathfrak{S}_2/h}).
\]
On the other hand, we can use (2) of Assumption~\ref{ass1} to bound the norm of \(\phi\) from below. We get indeed constants \(A_3, A_4\) and \(A_5\) such that, if \(h\) is small enough,
\[
  \norm{\phi}^2 \geq A_3(1-A_4\ee^{-\mathfrak{S}_2/h} - A_5\ee^{-\mathfrak{S}_3/h}).
\]
This gives the bound in~\eqref{eq:lambdan}. Combining this bound with (4) in Assumption~\ref{ass1} we conclude that that \(\lambda_{n+1}(h) > \lambda_n(h)\) if \(h\) is sufficiently small.
\end{proof}

We want to link the quasi mode constructions \(\{u_{h,j}\}\) to the low-lying eigenvalues of \(T_h\). To do this, we want to show that the symmetric matrix \(\mathsf{U}_h=(\mathsf{u}_{j,k})\),
\begin{equation}\label{eq:def-u-jk}
  \mathsf{u}_{j,k} = \IP{T_h u_{h,j},u_{h,k}},
\end{equation}
does not differ much (component-wise) from the matrix \(\mathsf{W}_h\) that  will be constructed as the restriction of \(T_h\) to the eigenspace
\begin{equation}\label{eq:def-Fh}
 F_h\coloneqq \bigoplus_{j=1}^n \Ker (T_h-\lambda_j(h)),
\end{equation}
written in an orthonormal basis. We will do the approximation in two steps. We first consider the projected functions 
\[
  v_{h,j} = \Pi_{F_h}u_{h,j}
\]
and show that the norms \(\norm{v_{h,j} - u_{h,j}}\) are small. Since the span of \(\{u_{h,j}\}\) is \(n\)-dimensional by Assumption~\ref{ass1} (2), it will follow that the \(\{v_{h,j}\}\) are linearly independent, and thus constitute a basis for \(F_h\). 

\begin{proposition}\label{prop:dim-F}
  If Assumption~\ref{ass1} holds for $E_h$, then for \(h>0\) sufficiently small, we have \(\dim (F_h)=n\), and the vectors
  \[
    v_{h,i}=\Pi_{F_h}u_{h,i}\quad (i\in\{1,\cdots,n\}),
  \]
  form a basis of \(F_h\). Moreover they satisfy 
  \begin{equation}\label{eq:2.2}
    \max_{1\leq i\leq n}
    \norm{v_{h,i}-u_{h,i}}
    =
    \mathcal O(h^{-p}\ee^{-\mathfrak S_1/h}).
  \end{equation}
\end{proposition}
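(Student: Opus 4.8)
The plan is to leverage the spectral gap $\lambda_n(h)<\lambda_{n+1}(h)$ established in Proposition~\ref{prop:ev-bounds}, sharpened by the quantitative lower bound $\lambda_{n+1}(h)\ge c h^{p}$ from Assumption~\ref{ass1}~(4), together with the quasi-mode bound of Assumption~\ref{ass1}~(1); the idea is that $v_{h,i}-u_{h,i}$ lies in a subspace on which $T_h$ is invertible with a polynomially controlled inverse, which forces it to be exponentially small.

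First I would identify $F_h$ with a spectral subspace. By Proposition~\ref{prop:ev-bounds}, for $h$ small we have $\lambda_n(h)\le C\ee^{-\mathfrak S_1/h}<\lambda_{n+1}(h)$, so the eigenvalues of $T_h$ in $(-\infty,\lambda_n(h)]$ have total multiplicity exactly $n$; hence $F_h=\operatorname{Ran}\mathbf 1_{(-\infty,\lambda_n(h)]}(T_h)$, $\dim F_h=n$, and $\Pi_{F_h}$ is the corresponding spectral projection. As a spectral projection, $\Pi_{F_h}$ leaves $D_h$ invariant and commutes with $T_h$ on $D_h$, and the part of $T_h$ in the reducing subspace $\operatorname{Ran}(\mathrm{Id}-\Pi_{F_h})$ is a self-adjoint operator with spectrum in $[\lambda_{n+1}(h),\infty)$; since $\lambda_{n+1}(h)\ge c h^{p}>0$, it is invertible with the norm of its inverse at most $(c h^{p})^{-1}$.

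With this at hand, $\norm{v_{h,i}-u_{h,i}}=\norm{(\mathrm{Id}-\Pi_{F_h})u_{h,i}}$ because $v_{h,i}=\Pi_{F_h}u_{h,i}$, and using the commutation relation $T_h(\mathrm{Id}-\Pi_{F_h})u_{h,i}=(\mathrm{Id}-\Pi_{F_h})T_h u_{h,i}$ one estimates
\[
  \norm{v_{h,i}-u_{h,i}}
  \le \lambda_{n+1}(h)^{-1}\,\norm{(\mathrm{Id}-\Pi_{F_h})T_h u_{h,i}}
  \le (c h^{p})^{-1}\,\norm{T_h u_{h,i}}
  = \mathcal O(h^{-p}\ee^{-\mathfrak S_1/h}),
\]
the last bound being Assumption~\ref{ass1}~(1). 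This is exactly~\eqref{eq:2.2}.

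Finally I would check that $(v_{h,1},\dots,v_{h,n})$ is a basis of $F_h$. By Assumption~\ref{ass1}~(2) the Gram matrix of $(u_{h,1},\dots,u_{h,n})$ is $\mathrm{Id}+\mathcal O(\ee^{-\min(\mathfrak S_2,\mathfrak S_3)/h})$; since $\norm{v_{h,i}}\le\norm{u_{h,i}}=\mathcal O(1)$ and $\norm{v_{h,i}-u_{h,i}}=o(1)$ by~\eqref{eq:2.2}, writing $\IP{v_{h,j},v_{h,k}}=\IP{u_{h,j},u_{h,k}}+\IP{v_{h,j}-u_{h,j},u_{h,k}}+\IP{v_{h,j},v_{h,k}-u_{h,k}}$ shows the Gram matrix of $(v_{h,1},\dots,v_{h,n})$ is also $\mathrm{Id}+o(1)$, hence invertible for $h$ small; thus the $v_{h,i}$ are linearly independent, and being $n$ vectors in the $n$-dimensional space $F_h$ they form a basis. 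The only mildly delicate point is the functional-analytic bookkeeping of the second paragraph — that $\Pi_{F_h}$ preserves $D_h$, commutes with $T_h$ there, and that the reduced operator is bounded below by $\lambda_{n+1}(h)$ — and it is precisely this step, through Assumption~\ref{ass1}~(4), that produces the polynomial loss $h^{-p}$ in~\eqref{eq:2.2}. Note that hypothesis~(3) of Assumption~\ref{ass1} is not needed.
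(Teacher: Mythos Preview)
Your proof is correct and follows essentially the same approach as the paper: both use the spectral gap from Assumption~\ref{ass1}~(4) to bound the inverse of $T_h$ on $F_h^\perp$ by $(ch^p)^{-1}$, combine it with the quasi-mode bound~(1), and deduce linear independence of the $v_{h,i}$ from that of the $u_{h,i}$. Your use of the commutation $T_h(\mathrm{Id}-\Pi_{F_h})=(\mathrm{Id}-\Pi_{F_h})T_h$ to get $\norm{T_h(u_{h,i}-v_{h,i})}\le\norm{T_hu_{h,i}}$ directly is in fact slightly cleaner than the paper's triangle-inequality split $\norm{T_h(u_{h,i}-v_{h,i})}\le\norm{T_hu_{h,i}}+\norm{T_hv_{h,i}}$, which then needs a separate bound on $\norm{T_hv_{h,i}}$.
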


\begin{proof}
  Since we count multiplicities, we know in general that \(\dim (F_h)\geq n\). However, by (4) in Assumption~\ref{ass1}, we get from Proposition~\ref{prop:ev-bounds} that \(\dim(F_h)=n\).

  With \(v_{h,i}=\Pi_{F_h} u_{h,i}\) we note that \(u_{h,i} - v_{h,i} \in H \ominus F_h\). Since \(T_h\), restricted to \(H \ominus F_h\) is bounded below by \(ch^p\) by Assumption~\ref{ass1} (4), we find that
  \[
    \norm{T(u_{h,i} - v_{h,i})} \geq c\, h^p\norm{u_{h,i}-v_{h,i}}.
  \] 
  On the other hand, According to Assumption~\ref{ass1} (1) and (2), and Proposition \ref{prop:ev-bounds},
  \[
    \norm{T(u_{h,i} - v_{h,i})}
    \leq
    \norm{Tu_{h,i}} + \norm{T v_{h,i}}
    \leq
    C\ee^{-\mathfrak{S}_1/h}.
  \]
  Combining these inequalities we get~\eqref{eq:2.2}. From this and Assumption~\ref{ass1} (2), we find that \(\{v_{h,1},\ldots,v_{h,n}\}\) are linearly independent, and hence a basis for \(F_h\).
\end{proof}

\subsection{Reduction to a matrix through a suitable orthonormal basis}
The aim in this subsection is to find an orthonormal basis for $F_h$ such that the matrix of the restriction of $T_h$ in this basis can be well approximated. Later in the applications to multiple wells problems this matrix will be according to the previous literature called the interaction matrix.

The basis \(\{v_{h,j}\}\) of \(F_h\) that we just constructed will, in general, not be orthogonal. We construct, by a symmetry-preserving Gram--Schmidt procedure an orthonormal basis \(\{w_{h,j}\}\). The matrix \(\mathsf{W}_h\) will be the matrix of \(T_h\) restricted to \(F_h\), written in this new basis \(\{w_{h,j}\}\). 
 
Let us denote by \(\mathsf G_h=(\mathsf g_{ij}(h))_{1\leq i,j\leq n}\) the Gram matrix of the basis \(\{v_{h,1},\ldots,v_{h,n}\}\) of \(F_h\), where
\begin{equation}\label{eq:def-Gij}
  \mathsf g_{ij}=\langle v_{h,i},v_{h,j}\rangle.
\end{equation}
Since the  \(\{v_{h,j}\}\) are linearly independent, the Gram matrix becomes positive definite, so \(\mathsf G_h^{-1/2}\) is well defined and positive definite. We obtain an orthonormal basis \(\mathcal V_h=\{w_{h,1},\ldots,w_{h,n}\}\)  of \(F_h\) as follows\footnote{We could  have worked in the basis obtained by the standard Gram--Schmidt process but the downside is that the Gram--Schmidt process does not respect the symmetry invariance that we will impose later.}
\begin{equation}\label{eq:def-fij}
  \begin{pmatrix}
    w_{h,1}\\ \vdots\\w_{h,n}
  \end{pmatrix}
  = 
  \mathsf G_h^{-1/2}
  \begin{pmatrix}
    v_{h,1}\\ \vdots\\v_{h,n}
  \end{pmatrix}.
\end{equation}

We consider the restriction of \(T_h\) to the space \(F_h\) and  denote by \(\mathsf W_h=(\mathsf w_{ij})_{1\leq i,j\leq n}\) its  matrix  in the basis \(\mathcal V_h\), so \(\mathsf{w}_{ij}=\IP{T_h w_{h,i},w_{h,j}}\). The matrix \(\mathsf W_h\) is hermitian, with eigenvalues \(\{\lambda_1(h),\ldots,\lambda_n(h)\}\).  The next proposition controls how   \(\mathsf W_h\) is approximated by the matrix \(\mathsf U_h\) defined by~\eqref{eq:def-u-jk}.  

\begin{proposition}\label{prop:int-matrix}
  Let 
  \begin{equation}\label{eq:def-Lambda-h}
    \left.
    \begin{aligned}
      \Lambda_h &\coloneqq \| T_h|_{F_h}\| =\max_{1\leq i\leq n}|\lambda_i(h)|,\\
      C_h &= h^{-p}\ee^{-\mathfrak S_1/h}+\ee^{-\min(\mathfrak S_2,\mathfrak S_3)/h},\\
      \varepsilon_h &=(\Lambda_h+C_h)C_h . 
    \end{aligned}
    \right\}
  \end{equation}
  Then the matrix \(\mathsf R_h = (\mathsf{r}_{ij})\),
  \begin{equation}\label{eq:inter-a}
    \mathsf R_h
    \coloneqq 
    \mathsf{W}_h - \mathsf U_h,
  \end{equation}
  is symmetric, and satisfies
  \begin{equation}\label{eq:inter-c} 
    \mathsf r_{ij}
    = 
      \mathcal O(\varepsilon_h)\,.
  \end{equation}
\end{proposition}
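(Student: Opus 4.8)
The plan is to interpolate between $\mathsf U_h$ and $\mathsf W_h$ through the auxiliary matrix $\tilde{\mathsf V}_h=(\tilde{\mathsf v}_{ij})$ of the quadratic form of $T_h$ in the (non-orthonormal) basis $\{v_{h,i}\}$ of $F_h$, namely $\tilde{\mathsf v}_{ij}=\IP{T_h v_{h,i},v_{h,j}}$. The symmetry of $\mathsf R_h$ is immediate once one notes that both $\mathsf W_h$ and $\mathsf U_h$ are symmetric (the former because it represents the self-adjoint operator $T_h|_{F_h}$ in an orthonormal basis of $F_h$, the latter directly from the self-adjointness of $T_h$ together with $u_{h,i}\in D_h$), so the whole content is the entrywise bound~\eqref{eq:inter-c}, which I would establish as the combination of two steps: $\tilde{\mathsf v}_{ij}-\mathsf u_{ij}=\mathcal O(\varepsilon_h)$ and $\mathsf w_{ij}-\tilde{\mathsf v}_{ij}=\mathcal O(\varepsilon_h)$.

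For the first step I would write $\rho_{h,i}:=u_{h,i}-v_{h,i}\in H\ominus F_h$, so that $\norm{\rho_{h,i}}=\mathcal O(h^{-p}\ee^{-\mathfrak S_1/h})$ by~\eqref{eq:2.2} in Proposition~\ref{prop:dim-F}. Since $v_{h,i}\in F_h$ and $F_h$ is $T_h$-invariant, $T_h v_{h,i}\in F_h$ is orthogonal to $\rho_{h,j}$, whence
\[
  \tilde{\mathsf v}_{ij}=\IP{T_h v_{h,i},v_{h,j}}=\IP{T_h v_{h,i},u_{h,j}}=\IP{v_{h,i},T_h u_{h,j}}=\mathsf u_{ij}-\IP{\rho_{h,i},T_h u_{h,j}},
\]
using self-adjointness in the third equality and $u_{h,i}=v_{h,i}+\rho_{h,i}$ in the last. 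By Cauchy--Schwarz and Assumption~\ref{ass1}(1), $|\IP{\rho_{h,i},T_h u_{h,j}}|\le\norm{\rho_{h,i}}\,\norm{T_h u_{h,j}}=\mathcal O(h^{-p}\ee^{-2\mathfrak S_1/h})$, and this is $\mathcal O(C_h^2)\subset\mathcal O(\varepsilon_h)$ because $h^{-p}\ee^{-\mathfrak S_1/h}\le C_h$ and $\ee^{-\mathfrak S_1/h}\le C_h$ for $h$ small.

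For the second step I would use the change of basis~\eqref{eq:def-fij}. Expanding $\IP{v_{h,i},v_{h,j}}$ via $v_{h,i}=u_{h,i}-\rho_{h,i}$ and invoking Assumption~\ref{ass1}(2) together with the bound on $\norm{\rho_{h,i}}$ gives $\mathsf g_{ij}=\delta_{ij}+\mathcal O(C_h)$; since $n$ is fixed this means $\norm{\mathsf G_h-I}=\mathcal O(C_h)$, hence $\mathsf G_h^{-1/2}=I+\mathsf N_h$ with $\norm{\mathsf N_h}=\mathcal O(C_h)$ by analyticity of $z\mapsto(1+z)^{-1/2}$ near $0$. The same computation that shows $\{w_{h,i}\}$ is orthonormal shows, with $T_h$ inserted, that $\mathsf W_h=\mathsf G_h^{-1/2}\tilde{\mathsf V}_h\mathsf G_h^{-1/2}$, so
\[
  \mathsf W_h-\tilde{\mathsf V}_h=\mathsf N_h\tilde{\mathsf V}_h+\tilde{\mathsf V}_h\mathsf N_h+\mathsf N_h\tilde{\mathsf V}_h\mathsf N_h.
\]
Here $\norm{\tilde{\mathsf V}_h}=\mathcal O(\Lambda_h)$, since $|\tilde{\mathsf v}_{ij}|\le\norm{T_h v_{h,i}}\norm{v_{h,j}}\le\Lambda_h\norm{v_{h,i}}\norm{v_{h,j}}$ (because $v_{h,i}\in F_h$ and $\norm{T_h|_{F_h}}=\Lambda_h$) and $\norm{v_{h,i}}\le\norm{u_{h,i}}\le 2$ for small $h$ by Assumption~\ref{ass1}(2). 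Therefore $\mathsf w_{ij}-\tilde{\mathsf v}_{ij}=\mathcal O(\Lambda_h C_h)$.

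Putting the two steps together yields $\mathsf r_{ij}=\mathcal O(\Lambda_h C_h)+\mathcal O(C_h^2)=\mathcal O\big((\Lambda_h+C_h)C_h\big)=\mathcal O(\varepsilon_h)$. I expect the only genuine subtlety to be the bookkeeping: $\Lambda_h$ is merely polynomially (not exponentially) small, so one must be sure it enters the final estimate only multiplied by the exponentially small quantity $C_h$. This is precisely why the argument is organized so that $\Lambda_h$ appears only in the Gram-matrix correction step, and why the crude bound $\norm{\tilde{\mathsf V}_h}=\mathcal O(\Lambda_h)$ is exactly the right input there; everything else is routine Cauchy--Schwarz together with the elementary fact that $T_h$ leaves the finite-dimensional spectral subspace $F_h$ invariant with operator norm $\Lambda_h$.
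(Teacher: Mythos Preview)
Your proof is correct and follows the same two-step scheme as the paper: pass from $\mathsf U_h$ to the intermediate matrix $\tilde{\mathsf V}_h$ (the paper's Step~2, second half) and then from $\tilde{\mathsf V}_h$ to $\mathsf W_h$ via the Gram-matrix correction (the paper's Step~1 and first half of Step~2). Your version is in fact a little cleaner: exploiting the orthogonality $T_h v_{h,i}\perp\rho_{h,j}$ lets you get $\tilde{\mathsf v}_{ij}-\mathsf u_{ij}=\mathcal O(C_h^2)$ directly (the paper only obtains $\mathcal O(\varepsilon_h)$ at this step, via a less symmetric splitting), and the matrix identity $\mathsf W_h=\mathsf G_h^{-1/2}\tilde{\mathsf V}_h\mathsf G_h^{-1/2}$ replaces the paper's term-by-term expansion of $\langle T_h w_{h,i},w_{h,j}\rangle$.
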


\begin{proof}~\\
  {\bf Step 1.} Proposition~\ref{prop:dim-F} says that 
  \[
    \langle v_{h,i},v_{h,j}\rangle=\langle u_{h,i},u_{h,j}\rangle+\mathcal O(h^{-p}\ee^{-\mathfrak S_1/h}).
  \]
  With \(\mathsf I\) denoting the \(n\times n\) identity matrix, we get from (2) in Assumption~\ref{ass1},
  \[
    \mathsf G_h=\mathsf I+\mathcal O(C_h)
  \]
  so that
  \[
    \mathsf G_h^{-1/2}=\mathsf I+\mathcal O(C_h).
  \]
  It follows then that
  \begin{equation}\label{eq:f=v}
    \left.
    \begin{aligned}
    \|w_{h,i}-v_{h,i}\|&=\mathcal O(C_h),\\
    \|T_h w_{h,i}-T_h v_{h,i}\|&=\mathcal O\big(\Lambda_hC_h\big)
    \end{aligned}
    \right\}
  \end{equation}
  where in the second estimate in~\eqref{eq:f=v}, we simply combine the first estimate and the definition of $\Lambda_h$.

  {\bf Step 2.} We may write
  \begin{multline*}
    \mathsf w_{ij}
    =
    \langle T_{h}w_{h,i},w_{h,j}\rangle
    =
    \langle T_hv_{h,i},v_{h,j}\rangle
    +
    \langle T_hv_{h,i},w_{h,j}-v_{h,j}\rangle 
    +    \langle T_h (w_{h,i}-v_{h,i}),w_{h,j}\rangle.
  \end{multline*}
 Using this identity,~\eqref{eq:f=v} and Proposition~\ref{prop:dim-F}, we get
  \begin{equation}\label{eq:2.11}
    \langle T_h w_{h,i},w_{h,j}\rangle
    =
    \langle T_h v_{h,i},v_{h,j}\rangle
    +
    \mathcal O(\Lambda_hC_h).
  \end{equation}
  Then, we use
  \[
    \langle T_hv_{h,i},v_{h,j}\rangle
    =
    \langle T_h u_{h,i},u_{h,j}\rangle
    +
    \langle T_h u_{h,i},v_{h,j}-u_{h,j}\rangle
    +
    \langle v_{h,i}-u_{h,i}, T_hv_{h,j}\rangle.
  \]
  By Proposition~\ref{prop:dim-F} and (1) in Assumption~\ref{ass1}, we have
  \[ 
    \begin{aligned}
      \langle T_hu_{h,i},v_{h,j}-u_{h,j}\rangle  
      & =
      \mathcal O(h^{-p}\ee^{-2\mathfrak S_1/h})=\mathcal O(C_h^2)\,,\\
      \langle v_{h,i}-u_{h,i}, T_hv_{h,j}\rangle 
      & =\mathcal O(\Lambda_hC_h).
    \end{aligned}
  \]
  So we get
  \begin{equation}\label{eq:2.12}
    \langle T_hv_{h,i},v_{h,j}\rangle
    =
    \langle T_h u_{h,i},u_{h,j}\rangle + \mathcal O (\varepsilon_h)\,,
    \end{equation}
    which together with~\eqref{eq:2.11} implies~\eqref{eq:inter-c}. 
\end{proof}

An immediate consequence of Proposition~\ref{prop:int-matrix} is an improved lower  bound on the lowest eigenvalue \(\lambda_1(h)\).

\begin{corollary}\label{eq:lb-1st-ev}
  If Assumption~\ref{ass1} holds,   then there exist positive constants \(C,h_1\) such that,  for $h\in (0,h_1]$,
  \begin{equation}\label{eq:lambda-1n} 
    \lambda_1(h)\geq -C \ee^{-\min(\mathfrak{S}_1,2\mathfrak{S}_2,2\mathfrak{S}_3)/h}.
  \end{equation}
\end{corollary}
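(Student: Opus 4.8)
The plan is to extract the eigenvalue bound directly from the matrix comparison in Proposition~\ref{prop:int-matrix} together with the crude bounds on the diagonal entries of \(\mathsf U_h\) coming from Assumption~\ref{ass1}. Since \(\mathsf W_h\) is the matrix of \(T_h|_{F_h}\) in an orthonormal basis, its smallest eigenvalue is \(\lambda_1(h)\); equivalently, for any unit vector \(x=(x_1,\dots,x_n)\in\C^n\) we have \(\lambda_1(h)\le \langle \mathsf W_h x,x\rangle\), and \(\lambda_1(h)=\min_{\|x\|=1}\langle \mathsf W_h x,x\rangle\). So the first step is to write \(\lambda_1(h)=\min_{\|x\|=1}\langle(\mathsf U_h+\mathsf R_h)x,x\rangle\ge \lambda_{\min}(\mathsf U_h)-\|\mathsf R_h\|\), where \(\|\mathsf R_h\|\) denotes the operator norm of the \(n\times n\) hermitian matrix \(\mathsf R_h\).

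Next I would bound the two terms. For \(\mathsf R_h\): by~\eqref{eq:inter-c} every entry is \(\mathcal O(\varepsilon_h)\), so \(\|\mathsf R_h\|\le n\max_{ij}|\mathsf r_{ij}|=\mathcal O(\varepsilon_h)\). Unwinding the definition of \(\varepsilon_h=(\Lambda_h+C_h)C_h\) with \(\Lambda_h=\mathcal O(\ee^{-\mathfrak S_1/h})\) from Proposition~\ref{prop:ev-bounds} (here \(\Lambda_h=\max_i|\lambda_i(h)|\le C\ee^{-\mathfrak S_1/h}\) once \(\lambda_1(h)\ge -ch^q\) is improved, but in any case \(\Lambda_h\le \max(|\lambda_1(h)|,|\lambda_n(h)|)\) and \(|\lambda_n(h)|=\mathcal O(\ee^{-\mathfrak S_1/h})\) by~\eqref{eq:lambdan}) and \(C_h=h^{-p}\ee^{-\mathfrak S_1/h}+\ee^{-\min(\mathfrak S_2,\mathfrak S_3)/h}\), one checks that \(\varepsilon_h=\mathcal O(\ee^{-\mathfrak S_1/h}C_h)+\mathcal O(C_h^2)\). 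Expanding \(C_h^2\) gives terms \(h^{-2p}\ee^{-2\mathfrak S_1/h}\), \(h^{-p}\ee^{-(\mathfrak S_1+\min(\mathfrak S_2,\mathfrak S_3))/h}\) and \(\ee^{-2\min(\mathfrak S_2,\mathfrak S_3)/h}\); each of these is \(\mathcal O(\ee^{-\min(\mathfrak S_1,2\mathfrak S_2,2\mathfrak S_3)/h})\) after absorbing polynomial factors into a slightly smaller exponent (note \(\mathfrak S_1+\min(\mathfrak S_2,\mathfrak S_3)\ge \min(\mathfrak S_1,2\mathfrak S_2,2\mathfrak S_3)\) because if \(\mathfrak S_1\le\min(\mathfrak S_2,\mathfrak S_3)\) the sum is \(\ge 2\mathfrak S_1\ge\mathfrak S_1\), and otherwise it is \(\ge 2\min(\mathfrak S_2,\mathfrak S_3)\)). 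Similarly \(\ee^{-\mathfrak S_1/h}C_h\) produces \(h^{-p}\ee^{-2\mathfrak S_1/h}\) and \(\ee^{-(\mathfrak S_1+\min(\mathfrak S_2,\mathfrak S_3))/h}\), both again \(\mathcal O(\ee^{-\min(\mathfrak S_1,2\mathfrak S_2,2\mathfrak S_3)/h})\). Hence \(\|\mathsf R_h\|=\mathcal O(\ee^{-\min(\mathfrak S_1,2\mathfrak S_2,2\mathfrak S_3)/h})\).

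For \(\lambda_{\min}(\mathsf U_h)\): again \(\|\mathsf U_h\|\le n\max_{ij}|\mathsf u_{ij}|\) and \(\mathsf u_{ij}=\langle T_hu_{h,i},u_{h,j}\rangle\), which by Cauchy--Schwarz together with (1) and (2) of Assumption~\ref{ass1} is \(\mathcal O(\ee^{-\mathfrak S_1/h})\) (the norms \(\|u_{h,j}\|\) are bounded). Therefore \(\lambda_{\min}(\mathsf U_h)\ge -\|\mathsf U_h\|=\mathcal O(\ee^{-\mathfrak S_1/h})\), i.e. \(\lambda_{\min}(\mathsf U_h)\ge -C\ee^{-\mathfrak S_1/h}\ge -C\ee^{-\min(\mathfrak S_1,2\mathfrak S_2,2\mathfrak S_3)/h}\). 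Combining, \(\lambda_1(h)\ge \lambda_{\min}(\mathsf U_h)-\|\mathsf R_h\|\ge -C\ee^{-\min(\mathfrak S_1,2\mathfrak S_2,2\mathfrak S_3)/h}\), which is~\eqref{eq:lambda-1n}.

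The one genuinely non-routine point — the ``main obstacle'' — is the bookkeeping that the error exponents \(2\mathfrak S_1\), \(\mathfrak S_1+\min(\mathfrak S_2,\mathfrak S_3)\) and \(2\min(\mathfrak S_2,\mathfrak S_3)\) arising from \(\varepsilon_h\) are all at least \(\min(\mathfrak S_1,2\mathfrak S_2,2\mathfrak S_3)\), and that the \(\mathfrak S_1\) exponent from \(\mathsf U_h\) is also \(\ge\) that quantity; the polynomial prefactors \(h^{-p}, h^{-2p}\) are harmless since for any \(\delta>0\) we have \(h^{-2p}=\mathcal O(\ee^{\delta/h})\) and \(\min(\mathfrak S_1,2\mathfrak S_2,2\mathfrak S_3)\) can be replaced by a marginally smaller constant if one insists on a clean exponential (or one simply keeps the polynomial factor, which the \(\mathcal O\) in~\eqref{eq:lambda-1n} tacitly allows with a slightly smaller constant in the exponent). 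Everything else is a direct application of the min-max characterization of \(\lambda_1(h)\), Weyl's perturbation inequality for hermitian matrices, and the estimates already proved.
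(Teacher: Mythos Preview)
Your argument has a circularity at the point where you assert \(\Lambda_h=\mathcal O(\ee^{-\mathfrak S_1/h})\) ``from Proposition~\ref{prop:ev-bounds}''. That proposition only gives the \emph{upper} bound \(\lambda_n(h)\le C\ee^{-\mathfrak S_1/h}\); it says nothing about a lower bound on \(\lambda_1(h)\), which is precisely what the corollary is meant to establish. A priori one only has \(\lambda_1(h)\ge -ch^q\) from Assumption~\ref{ass1}(3), so \(\Lambda_h=\max_i|\lambda_i(h)|\) could be as large as \(ch^q\). Your parenthetical remark (``once \(\lambda_1(h)\ge -ch^q\) is improved'') acknowledges this but does not resolve it. And with only \(\Lambda_h=\mathcal O(h^q)\), the contribution \(\Lambda_hC_h\) to \(\varepsilon_h\) contains the term \(h^q\ee^{-\min(\mathfrak S_2,\mathfrak S_3)/h}\), which is \emph{not} \(\mathcal O(\ee^{-\min(\mathfrak S_1,2\mathfrak S_2,2\mathfrak S_3)/h})\); you would only obtain the weaker bound with exponent \(\min(\mathfrak S_1,\mathfrak S_2,\mathfrak S_3)\).

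The fix, and this is what the paper does, is not to bound \(\varepsilon_h\) a priori but to keep \(\Lambda_h\) on both sides and solve for it. From \(\Lambda_h\le\|\mathsf W_h\|\le\|\mathsf U_h\|+\|\mathsf R_h\|\) together with \(\|\mathsf U_h\|=\mathcal O(\ee^{-\mathfrak S_1/h})\) and \(\|\mathsf R_h\|=\mathcal O(\varepsilon_h)=\mathcal O(\Lambda_hC_h+C_h^2)\) one gets
\[
  \bigl(1+\mathcal O(C_h)\bigr)\Lambda_h=\mathcal O(\ee^{-\mathfrak S_1/h})+\mathcal O(C_h^2).
\]
Since \(C_h=o(1)\), the coefficient on the left is bounded below for small \(h\), and one reads off \(\Lambda_h=\mathcal O(\ee^{-\mathfrak S_1/h})+\mathcal O(C_h^2)=\mathcal O(\ee^{-\min(\mathfrak S_1,2\mathfrak S_2,2\mathfrak S_3)/h})\). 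Once this step is made, the rest of your computation (the bound on \(\mathsf U_h\) and the bookkeeping on the exponents in \(C_h^2\)) is correct.
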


\begin{proof}
It follows from H\(\ddot{\rm o}\)lder's inequality and (1)--(2) in Assumption~\ref{ass1} that \(\mathsf u_{ij}=\mathcal O(\ee^{-\mathfrak S_1/h})\),  where \(\mathsf u_{ij}\) is introduced in~\eqref{eq:def-u-jk}.   By Proposition~\ref{prop:int-matrix},  we have
\[
  \Lambda_h\leq \|\mathsf W_h\|\leq \|\mathsf U_h\|+\|\mathsf R_h\|
  =
  \mathcal O( \ee^{-\mathfrak S_1/h})+\mathcal (\Lambda_h C_h+C_h^2)
\]
which yields 
\[
  \bigl(1+\mathcal O(C_h)\bigr) \Lambda_h
  =
  \mathcal O( \ee^{-\mathfrak S_1/h})+ C_h^2.
\]
To conclude, we just notice that \(C_h=o(1)\).
\end{proof}

As we  will see in the next subsection, we can actually say much more about the spectrum of these matrices,  when we impose some symmetry condition involving $T_h$ and the choice of the $u_{h,j}$.

\subsection{Implementing invariance assumptions}

Our task in this subsection is to analyze the case when the  matrix \(\mathsf W_h\) of \(T_h|_{F_h}\) enjoys certain invariance properties. We shall see that this corresponds to what occurs in the case of \emph{symmetric} wells in the applications,  starting from the double well case as mathematically considered by E.~Harrell~\cite{Ha} and later extended to the multiple wells case in~\cite{HeSj1,HeSj2,Si1,Si2}. Here we mainly follow in a more abstract way~\cite{HeSj2}  and the heuristic presentation given in~\cite{FH-b}. We denote by $\mathbb Z_n$ the cyclic group of order $n$ and by $\mathfrak g \mapsto \rho(\mathfrak g)$  a faithful unitary representation of $\mathbb Z_n$ in $H$. We denote by $ a_n$ its generator, so $ a_n^n=e$ where $e$ is the identity element of the group.

In addition to the properties in Assumption~\ref{ass1}, we assume
  
\begin{assumption}\label{ass:sym}~
  \begin{enumerate}
     \item 
     The operator \(T_h\) commutes with \(\rho(\mathfrak g)\) for all $\mathfrak g \in \mathbb Z_n$.
     \item 
     \(u_{h,i+1} = \rho(  a_n) u_{h,i} \) for \(1\leq i\leq n-1\).
  \end{enumerate}
\end{assumption}

\begin{remark}\label{remlessabstract}
In the applications considered in this article, the Hilbert space will be $H=L^2(\Omega)$ where $\Omega$ is a domain in $\R^2$. We first consider the unitary representation $\rho_0$ of $\mathbb Z_n$ as the group $G_n$ of the $n$-fold rotations, i.e. the representation such that
\[
  \rho_0 ( a_n)\coloneqq g_n
\]
is the rotation in $\R^2$ by $2\pi/n$ around the origin in $\R^2$.

We let the rotation \(g_n\) act on functions as 
\begin{equation}\label{eq:def-Mgn}
  \bigl(M(g_n)u\bigr)(x)= u ( g_n^{-1} x).
\end{equation}
This gives by extension to any element of $G_n$ a representation of $G_n$ in $L^2(\Omega)$ if $\Omega\subset\R^2$ is a domain invariant by $G_n$ and we then define $\rho$ by
\[
  \rho(\mathfrak g) = M (\rho_0(\mathfrak g )) \,.
\]
Equivalently to Assumption \eqref{ass:sym}, we can then write in this case

\begin{assumption}\label{ass:symbis}~
  \begin{enumerate}
    \item $\Omega\subset\R^2$ is a domain invariant by $G$ and \(H=L^2(\Omega)\).
    \item The operator \(T_h\) commutes with \(M(g_n)\).
    \item \(u_{h,i+1} = M( g_n) u_{h,i}=u_{h,1}(g_n^{-i}x)\) for \(1\leq i\leq n-1\).
  \end{enumerate}
\end{assumption}
Assumption \ref{ass:sym}  permits to treat more general situations which for example occur in the case of manifolds or in the case of higher dimension.
\end{remark}

\begin{remark}\label{rem:reflection}
  If \(n=2\) we can define another group of symmetry \(\tilde G\) defined by the reflection \(\tilde g_2 \begin{psmallmatrix*}[r]x_1\\x_2\end{psmallmatrix*}=\begin{psmallmatrix*}[r]-x_1\\x_2\end{psmallmatrix*}\). We can then consider a variant of Assumption~\ref{ass:sym} by instead assuming that the domain \(\Omega\) is invariant by the reflection \(\tilde g_2\) and that \(u_{h,2}=\overline{M(\tilde g_2)u_{h,1}}\).  This symmetry invariance was assumed   by the papers considering the  magnetic tunneling induced by the geometry of the domain~\cite{Al,BHR,FHK,KR}. Notice that  $M (\tilde g_2)$ does not commute with $T_h$ and that  we have consequently  to compose $M(\tilde g_2)$ with the complex conjugation $\Gamma$ in order to get
  \[
    T_h \Gamma M(\tilde g_2) = \Gamma M(\tilde g_2) T_h \,.
  \]
\end{remark}

\begin{proposition}\label{prop:basis-B}
  If Assumption~\ref{ass:sym} holds, then the orthonormal basis \(\mathcal V_h=\{w_{h,1},\ldots,w_{h,n}\}\) of \(F_h\) satisfies,
  \[
    w_{h,i+1} = \rho( a_n) w_{h,i} \quad(1\leq i\leq n-1).
  \]
\end{proposition}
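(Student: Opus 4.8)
The plan is to track how the symmetry intertwines each stage of the construction that produced $\mathcal V_h$, and then combine the pieces. Write $R\coloneqq\rho(a_n)$ for brevity. The key observation is that every operation used to pass from $\{u_{h,i}\}$ to $\{w_{h,i}\}$ — spectral projection onto $F_h$, forming the Gram matrix, taking $\mathsf G_h^{-1/2}$ — is canonical and hence commutes with the unitary $R$, provided $R$ preserves the relevant spaces and pairings.

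First I would check that $R$ maps $F_h$ to itself. Since $T_h$ commutes with $R$ by Assumption~\ref{ass:sym}(1) and $R$ is unitary, $R$ maps $\Ker(T_h-\lambda_j(h))$ isometrically onto itself, hence preserves $F_h=\bigoplus_{j=1}^n\Ker(T_h-\lambda_j(h))$; consequently $R$ commutes with the orthogonal projection $\Pi_{F_h}$. Combined with Assumption~\ref{ass:sym}(2), this gives, for $1\le i\le n-1$,
\[
  v_{h,i+1}=\Pi_{F_h}u_{h,i+1}=\Pi_{F_h}Ru_{h,i}=R\,\Pi_{F_h}u_{h,i}=R\,v_{h,i},
\]
so the basis $\{v_{h,i}\}$ of $F_h$ is $R$-cyclic. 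Next, because $R$ is unitary, the Gram matrix is unaffected by a cyclic relabelling induced by $R$: $\mathsf g_{i+1,j+1}=\langle v_{h,i+1},v_{h,j+1}\rangle=\langle Rv_{h,i},Rv_{h,j}\rangle=\langle v_{h,i},v_{h,j}\rangle=\mathsf g_{ij}$ (indices mod $n$), i.e.\ $\mathsf G_h$ is a circulant matrix with respect to this indexing. Any real-analytic function of a circulant matrix is again circulant with the same cyclic structure, so $\mathsf G_h^{-1/2}=(\mathsf s_{ij})$ satisfies $\mathsf s_{i+1,j+1}=\mathsf s_{ij}$ as well.

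Finally I would feed this into the defining relation~\eqref{eq:def-fij}, $w_{h,i}=\sum_{j=1}^n \mathsf s_{ij}v_{h,j}$. Using $v_{h,j}=R^{\,j-1}v_{h,1}$ together with the circulant identity $\mathsf s_{i+1,j}=\mathsf s_{i,j-1}$, a one-line reindexing gives, for $1\le i\le n-1$,
\[
  w_{h,i+1}=\sum_{j=1}^n \mathsf s_{i+1,j}v_{h,j}
  =\sum_{j=1}^n \mathsf s_{i,j-1}v_{h,j}
  =\sum_{k=1}^n \mathsf s_{i,k}v_{h,k+1}
  =\sum_{k=1}^n \mathsf s_{i,k}R\,v_{h,k}
  =R\,w_{h,i},
\]
where the sums over $j$, $k$ run cyclically modulo $n$ (so $v_{h,n+1}=Rv_{h,n}=R^nv_{h,1}=v_{h,1}$, consistent with $R^n=\mathrm{id}$). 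This is exactly the claimed relation $w_{h,i+1}=\rho(a_n)w_{h,i}$.

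I do not expect a genuine obstacle here; the only point deserving a little care is the claim that $\mathsf G_h^{-1/2}$ inherits the circulant structure. The cleanest justification is abstract: let $S\colon\C^n\to\C^n$ be the cyclic shift; the circulant condition on $\mathsf G_h$ is precisely $S\mathsf G_hS^{-1}=\mathsf G_h$, and since $\mathsf G_h$ is positive definite and $x\mapsto x^{-1/2}$ is continuous (indeed analytic) on the spectrum, $S\mathsf G_h^{-1/2}S^{-1}=(S\mathsf G_hS^{-1})^{-1/2}=\mathsf G_h^{-1/2}$, i.e.\ $\mathsf G_h^{-1/2}$ is circulant. Alternatively one can diagonalize any circulant matrix by the discrete Fourier transform and compute the square root eigenvalue-by-eigenvalue. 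Either way the argument is routine, and the footnote in the text already flags that it is this symmetry-preservation — which ordinary Gram--Schmidt would destroy — that motivates the particular choice~\eqref{eq:def-fij}.
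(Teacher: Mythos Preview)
Your proof is correct and follows essentially the same route as the paper's: the key point in both is that $\Pi_{F_h}$ commutes with $\rho(a_n)$, which makes the basis $\{v_{h,i}\}$ cyclic under $R$. The paper stops there and declares this ``suffices,'' whereas you additionally spell out why the circulant structure passes through $\mathsf G_h^{-1/2}$ to the $w_{h,i}$; this extra detail is welcome and fills in exactly what the paper leaves implicit.
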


\begin{proof}
  Recall that \(\{w_{h,1},\ldots,w_{h,n}\}\) is defined in~\eqref{eq:def-fij} by the Gram matrix  
  starting from the basis consisting of the vectors \(v_{h,i}=\Pi_{F_h}u_{h,i}\), \(1\leq i\leq n\). It suffices to observe that  the projector \(\Pi_{F_h}\) on the eigenspace \(F_h\)  commutes with $\rho(\mathfrak g)$. Actually, if \(\{w_{h,1},\ldots,w_{h,n}\}\) is an orthonormal basis of \(F_h\) consisting of eigenvectors \(T_h\), then, since \(T_h\) commutes with  \(\rho( a_n)\), we get that \( \rho( a_n)w_{h,1},\ldots,\rho( a_n)w_{h,n}\) are eigenvectors of \(T_h\) and form an orthonormal basis of \(F_h\).  Consequently
    \[
      \begin{split}
        \Pi_{F_h} \rho( a_n)u
        &=
        \sum_{k=1}^n\langle \rho( a_n)u,\rho( a_n)w_{h,k}\rangle \rho( a_n)w_{h,k}\\
        &=
        \sum_{k=1}^n\langle u,w_{h,k}\rangle \rho( a_n)w_{h,k}=\rho( a_n)\Pi_{F_h}u.\qedhere
      \end{split}
    \]
\end{proof}
  
The matrix of $\rho( a_n)$ in the basis \(\mathcal V_h\) is the same as the matrix of the shift operator $\tau$ on \(\ell^2(\Z/n\Z)\), whose matrix is given by 
\begin{equation}\label{eq:deftau}
  \tau_{j,k} = \delta_{j+1,k} \mbox{ for } 1\leq j,k\leq n
\end{equation} 
where $\delta_{i,k}$ denotes the Kronecker symbol, with \(i\) computed in \(\Z/n\Z\). When $n=2$ and $n=3$, the matrix $\tau$ is respectively given by
 \[ 
  \begin{pmatrix} 
    0 & 1 \\ 
    1 & 0 
  \end{pmatrix}
  \quad\text{and}\quad
  \begin{pmatrix} 
    0 & 1 & 0 \\ 
    0 & 0 & 1 \\ 
    1 & 0 & 0 
  \end{pmatrix}\,.
\]
We observe that 
\[
  \tau^{n-1}=\tau^{-1}=\tau^*\,.
\]
The property that the operator $T_h$ commutes with $\rho( a)$ implies that the matrix \(\mathsf W_h\) (of \(T_h|_{F_h}\) in the basis $\mathcal V_h$) commutes with \(\tau\), i.e., \(\tau \mathsf  W_h = \mathsf  W_h  \tau\). Note that this invariance condition yields that 
\begin{equation}\label{eq:Mh-exp}
  \mathsf W_h = \sum_{k=0}^{n-1} I_k(h)  \tau^k,
\end{equation}
for some coefficients $I_0(h), \ldots, I_{n-1}(h) \in \C$. Here \(\tau^0\) denotes the identity matrix.

The Hermitian property of $\mathsf{W}_h$ gives, in addition,
\begin{equation}\label{eq:saadj}
  I_0(h) \in \mathbb R\,,\, 
  I_k(h) = \overline {I_{n-k}(h)}\;\mbox{ for } k=1,\dots, n-1\,.
\end{equation}

Notice that the matrix \(\mathsf U_h\) introduced in Proposition~\ref{prop:int-matrix} satisfies the same properties as $\mathsf W_h$. Hence we can also write
\begin{equation}
  \mathsf U_h = \sum_{k=0}^{n-1} J_k(h) \tau^k \,,
\end{equation}
for some coefficients $J_0(h), \ldots,J_{n-1}(h) \in \C\,$ and the Hermitian property of $\mathsf U_h$ also implies
\begin{equation}\label{eq:saadjA}
  J_0(h) \in \mathbb R\,,\, 
  J_k(h) = \overline {J_{n-k}(h)}\;\mbox{ for } k=1,\dots, n-1\,.
\end{equation}

All these invariant matrices (\(\mathsf W_h\) or \(\mathsf U_h\)) share the property to be diagonalizable in the same orthonormal basis of eigenfunctions $\ee_k$ $(k=1,\dots,n)$ whose coordinates in our selected basis are given by
\[
  (\ee_k)_\ell = \omega_n^{(k-1) \ell}\,,
  \quad\text{with}\quad
  \omega_n\coloneqq  \exp (2 i \pi/n)\,.
\]
It is then easy to compute the corresponding eigenvalues.

In particular, we get an explicit representation of the eigenvalues of \(\mathsf W_h\) which  illustrates when $n=3,4$,  the possibility of eigenvalue crossings (i.e. change of multiplicity).
\begin{itemize}
  \item When \(n=2\),  \(\mathsf W_h\) assumes the form
  \(
    \begin{pmatrix}
      I_0&I_1\\
      I_1&I_0
    \end{pmatrix}
  \)
  with \(I_1\) real. This matrix has two eigenvalues
  \begin{equation}\label{eq:formn=2}
    \lambda_1 = I_0 - |I_1|\,,\, \lambda_2= I_0 + |I_1|\,.
  \end{equation}
  \item When \(n=3\),  \(\mathsf W_h\) assumes the form
  \[
  \mathsf{W}_h=
    \begin{pmatrix}
      I_0&\overline{I_1}&I_1\\
      I_1&I_0& \overline{I_1}\\
      \overline{I_1}&I_1&I_0
    \end{pmatrix}
  \]
  with \(I_1=\rho\ee^{\ii\theta}\), \(\rho \geq 0\), \(\theta \in [0,2\pi)\). This matrix has three eigenvalues
  \begin{equation}\label{eq:formn=3}
    \mu_k = I_0 + 2 \rho\cos\left(\theta+(k-1)\frac{2\pi}{3}\right),\quad k\in \{1,2,3\}.
  \end{equation}
  \item When \(n=4\), we meet the matrix
  \[ \mathsf{W}_h=
    \begin{pmatrix}
      I_0&  \overline{I_1} &I_2& I_1 \\
      I_1& I_0 & \overline{I_1}& I_2  \\
      I_2 & I_1 & I_0 & \overline{I_1} \\
      \overline{I_1}&I_2& I_1 &I_0 
    \end{pmatrix}
  \] 
  with \(I_2\) real and \(I_1=\rho\ee^{\ii\theta}\), \(\rho \geq 0\), \(\theta \in [0,2\pi)\). We refer to~\cite{FH-b} for a  further discussion of this case.  Figure~\ref{fig2} illustrates the braid startucture of the eigenvalues of the matrix \(\mathsf W_h\).
  \begin{figure}[htb]
     \includegraphics[page=6]{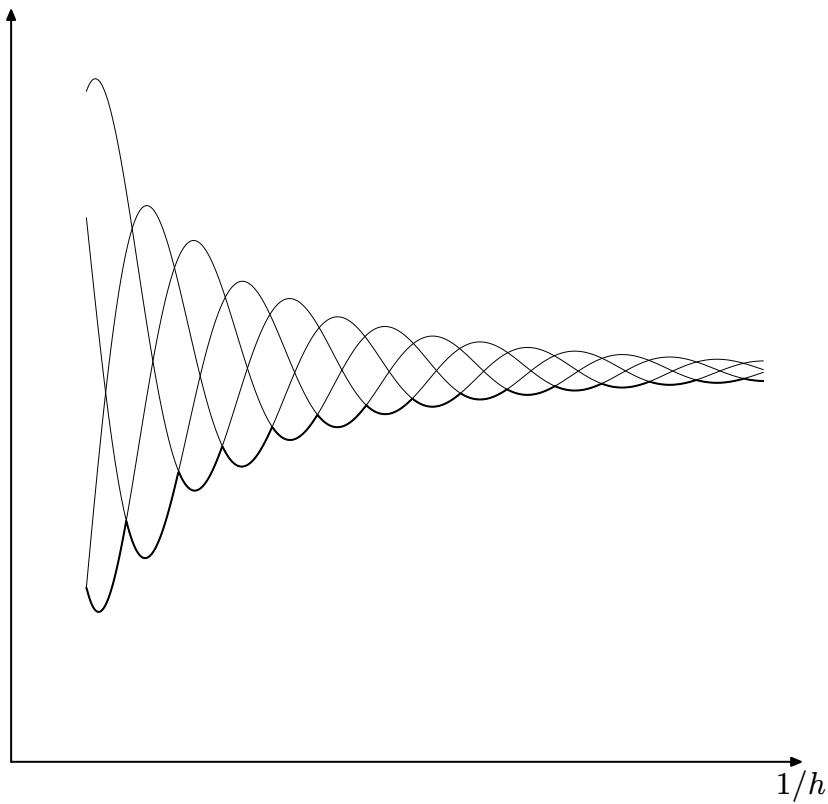}
     \caption{A schematic figure of four eigenvalues with a braid structure in the case of a rotational symmetry of order 4. } 
     \label{fig2}
   \end{figure}
\end{itemize}

\subsection{Applications}

\subsubsection{Additional hypothesis}\label{rem:optimal-error}
We can strengthen the  estimate of \(\Lambda_h\) in~\eqref{eq:def-Lambda-h} if we assume additionally that\footnote{We shall see in the applications that the rough estimate of \(J_k(h)\) by H\(\rm\ddot o\)lder's inequality and (1)--(2) in Assumption~\ref{ass1} is not sufficient to the accurate estimate of tunneling.}

\begin{assumption}\label{ass-error}
There exists a positive  constant \(\mathfrak S\) such that
\begin{equation}\label{eq:newcond}
  \mathfrak S < 2\min_{1\leq j\leq 3} \mathfrak S_j\,,
\end{equation}
\begin{subequations}\label{eq:assnew1}
  \begin{equation}
    J_{k}(h) \underset{h\searrow0}{=}\mathcal O(\ee^{- \mathfrak S/h}) \quad  (k=0,\cdots,n-1),
  \end{equation}
  and 
  \begin{equation}
    |J_1(h)|\underset{h\searrow0}{=}\ee^{-(\mathfrak S+o(1))/h}.
  \end{equation}
\end{subequations}
\end{assumption}

\begin{proposition}\label{prop:control-Lambda-h}
There exist positive constants \(C,h_0>0\) such that, if Assumptions~\ref{ass1} and \ref{ass-error} hold,  then for all \(h\in(0,h_0]\), the symmetric matrix \(\mathsf{R}_h=(\mathsf r_{ij})\) introduced in~\eqref{eq:inter-a} satisfies
\[ 
  \|\mathsf R_h\|
  \underset{h\searrow0}{=} 
  \mathcal O\big(\ee^{-3\mathfrak S/2h}\big)=o\big(|J_1(h)|\big).
\]
\end{proposition}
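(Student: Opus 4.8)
The plan is to feed the conclusion of Proposition~\ref{prop:int-matrix} with the sharpened bound on $\Lambda_h$ coming from Assumption~\ref{ass-error}, and then simply track exponents. First I would record that, by Assumption~\ref{ass-error}, the matrix $\mathsf U_h=\sum_{k=0}^{n-1}J_k(h)\tau^k$ has norm $\|\mathsf U_h\|=\mathcal O(\ee^{-\mathfrak S/h})$, since each $J_k(h)=\mathcal O(\ee^{-\mathfrak S/h})$ and $\tau$ is unitary. Next, from Proposition~\ref{prop:int-matrix} we have $\mathsf W_h=\mathsf U_h+\mathsf R_h$ with $\|\mathsf R_h\|=\mathcal O(\varepsilon_h)=\mathcal O((\Lambda_h+C_h)C_h)$, and $\Lambda_h=\|\mathsf W_h\|\leq\|\mathsf U_h\|+\|\mathsf R_h\|$. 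Feeding this back gives
\[
  \Lambda_h\leq C\ee^{-\mathfrak S/h}+C(\Lambda_h+C_h)C_h,
\]
and since $C_h=o(1)$ (indeed $C_h=\mathcal O(\ee^{-\delta/h})$ for some $\delta>0$ by its definition and Assumption~\ref{ass1}), this can be absorbed to yield $\Lambda_h=\mathcal O(\ee^{-\mathfrak S/h})+\mathcal O(C_h^2)$.

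The second step is to compare the two exponential rates. By definition $C_h=h^{-p}\ee^{-\mathfrak S_1/h}+\ee^{-\min(\mathfrak S_2,\mathfrak S_3)/h}$, so $C_h=\mathcal O(\ee^{-(\min_j\mathfrak S_j-o(1))/h})$, and hence $C_h^2=\mathcal O(\ee^{-(2\min_j\mathfrak S_j-o(1))/h})$. Assumption~\ref{ass-error} gives $\mathfrak S<2\min_{1\leq j\leq 3}\mathfrak S_j$, so $C_h^2=o(\ee^{-\mathfrak S/h})$ and therefore $\Lambda_h=\mathcal O(\ee^{-\mathfrak S/h})$. This already improves the rough bound $\Lambda_h=\mathcal O(\ee^{-\mathfrak S_1/h})$ used before (noting $\mathfrak S$ could a priori be larger than $\mathfrak S_1$).

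The third step is the actual estimate of $\mathsf R_h$. Plugging $\Lambda_h=\mathcal O(\ee^{-\mathfrak S/h})$ and $C_h=\mathcal O(\ee^{-(\min_j\mathfrak S_j-o(1))/h})$ into $\varepsilon_h=(\Lambda_h+C_h)C_h$, the dominant contribution is $\Lambda_h C_h=\mathcal O(\ee^{-(\mathfrak S+\min_j\mathfrak S_j-o(1))/h})$ versus $C_h^2=\mathcal O(\ee^{-(2\min_j\mathfrak S_j-o(1))/h})$. Using $\mathfrak S<2\min_j\mathfrak S_j$ once more, i.e. $\mathfrak S/2<\min_j\mathfrak S_j$, we get $\mathfrak S+\min_j\mathfrak S_j>\mathfrak S+\mathfrak S/2=3\mathfrak S/2$ and likewise $2\min_j\mathfrak S_j>3\mathfrak S/2$; the $o(1)$ in the exponent is harmless since the gap $\min_j\mathfrak S_j-\mathfrak S/2>0$ is a fixed positive number. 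Hence $\|\mathsf R_h\|=\mathcal O(\varepsilon_h)=\mathcal O(\ee^{-3\mathfrak S/2h})$. Finally, the lower bound $|J_1(h)|=\ee^{-(\mathfrak S+o(1))/h}$ from Assumption~\ref{ass-error} gives $\ee^{-3\mathfrak S/2h}=\ee^{-\mathfrak S/2h}\cdot\ee^{-\mathfrak S/h}=o(|J_1(h)|)$, since $\ee^{-3\mathfrak S/2h}/|J_1(h)|=\ee^{-(\mathfrak S/2+o(1))/h}\to0$. This yields the claimed $\|\mathsf R_h\|=\mathcal O(\ee^{-3\mathfrak S/2h})=o(|J_1(h)|)$.

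The only delicate point — and the one I would be most careful about — is the bookkeeping of the $o(1)$ terms in the exponents: one must check that every strict inequality used (namely $\mathfrak S<2\min_j\mathfrak S_j$, hence $\mathfrak S/2<\min_j\mathfrak S_j$ and $3\mathfrak S/2<\mathfrak S+\min_j\mathfrak S_j$ and $3\mathfrak S/2<2\min_j\mathfrak S_j$) leaves a fixed positive margin, so that subtracting an $o(1)$ from the exponent does not destroy the comparison for $h$ small enough. No other subtlety arises; the argument is a short algebraic manipulation of Proposition~\ref{prop:int-matrix} combined with Assumption~\ref{ass-error}.
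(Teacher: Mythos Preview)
Your proof is correct and follows essentially the same approach as the paper's: bootstrap $\Lambda_h$ via $\Lambda_h\leq\|\mathsf U_h\|+\|\mathsf R_h\|$ together with Proposition~\ref{prop:int-matrix} and Assumption~\ref{ass-error} to obtain $\Lambda_h=\mathcal O(\ee^{-\mathfrak S/h})+\mathcal O(C_h^2)$, then feed this into $\varepsilon_h=(\Lambda_h+C_h)C_h$ and use $\mathfrak S<2\min_j\mathfrak S_j$ to conclude $\varepsilon_h=\mathcal O(\ee^{-3\mathfrak S/2h})=o(|J_1(h)|)$. The paper's version is simply terser, referring back to the argument of Corollary~\ref{eq:lb-1st-ev} rather than spelling out the absorption step and the exponent bookkeeping as you do.
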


\begin{proof}
This follows by applying Proposition~\ref{prop:int-matrix} (the same argument as in Corollary~\ref{eq:lambda-1n}).  
Indeed,  we have by the first identity in~\eqref{eq:assnew1}, 
\begin{equation}\label{eq:imp1}
  \Lambda_h=\mathcal O(\ee^{-\mathfrak S/h})+\mathcal O(C_h^2)\,.
\end{equation}
Inserting~\eqref{eq:imp1} into the definition of  \(\varepsilon_h\) in~\eqref{eq:def-Lambda-h}, we obtain that
\begin{equation}\label{eq:def-delta-h}
  \varepsilon_h
  =
  \mathcal O(\delta_h)\mbox{ where }\delta_h=C_h\ee^{-\mathfrak S/h}+C_h^2
  =
  \mathcal O(\ee^{- 3\mathfrak S/2h}).
\end{equation}
We can improve the bounds~\eqref{eq:inter-c} of the symmetric matrix \(\mathsf{R}_h\) into \(\mathsf r_{ij} = \mathcal O(\delta_h)\). To finish the proof,  we observe that,  by the second identity in~\eqref{eq:assnew1}, \( \delta_h=o(|J_1(h)|)\).
\end{proof}

\subsubsection{The case $n=2$}\label{sec:2-wells}
A first consequence of the previous  analysis is a full understanding of  the case corresponding to   $n=2$ where the symmetry $g_2$ reads $(x_1,x_2) \mapsto (-x_1,-x_2)$. Assuming that Assumptions~\ref{ass:symbis} and \ref{ass-error} hold, we get from~\eqref{eq:formn=2} and Proposition~\ref{prop:control-Lambda-h}
\begin{equation}\label{eq:tun-error-a} 
  \lambda_2(h)-\lambda_1(h)=2 |J_1(h) |+ o\big(|J_1(h)|\big).
\end{equation}

\subsubsection{The case $n=3$, braid structure of eigenvalues}\label{sec:3-wells}
Suppose that Assumptions~\ref{ass:symbis} and \ref{ass-error} hold with \(n=3\).  Let  \(I_0(h),I_1(h)\) be as in~\eqref{eq:Mh-exp} and let us write 
\[
  I_{1}(h)=\rho(h) \ee^{\ii\theta(h)}
  \text{  where } \rho(h)\geq 0 \text{ and } \theta(h)\in[0,2\pi).
\]
Then, by~\eqref{eq:formn=3}, we have a relabeling \(\mu_1(h),\mu_2(h)\), and \(\mu_3(h)\) of the eigenvalues \(\lambda_1(h),\lambda_2(h)\), and \(\lambda_3(h)\) of $T_h$ with $\rho=\rho(h)$ and $\theta=\theta(h)$. Moreover,
\begin{subequations}\label{eq:Ik=Jk}
  \begin{equation} 
    I_0(h)= J_0(h) +\mathcal O(\delta_h),\quad 
    I_1(h) = J_1(h)+\mathcal O(\delta_h)\sim J_1(h)
  \end{equation}
  with \(\delta_h=o\big(J_1(h)\big)\) defined in~\eqref{eq:def-delta-h} and
  \begin{equation}
    J_0(h)=\langle T_hu_{h,1},u_{h,1}\rangle,\quad 
    J_1(h)= \langle T_hu_{h,1},u_{h,2}\rangle.
  \end{equation}
\end{subequations}
Notice that there is possibility for eigenvalue crossings between 

\begin{itemize}
  \item \(\mu_1(h)\) and \(\mu_2(h)\) if \(\theta(h) \in \{ 2\pi/3, 5\pi/3 \}\);
  \item \(\mu_2(h)\) and \(\mu_3(h)\) if \(\theta(h) \in \{ 0, \pi \}\);
  \item \(\mu_1(h)\) and \(\mu_3(h)\) if \(\theta(h) \in \{ \pi/3, 4\pi/3 \}\).
\end{itemize}

The point is then to seek an accurate approximation of \(\theta(h)\).  Notice that~\eqref{eq:Ik=Jk} yields \( I_1(h)\underset{h\searrow0}{\sim}J_1(h)\). Defining \(\theta_1(h)\) by \(J_1(h) =\rho_1 (h) \ee^{ i \theta_1 (h)}\), we can approximate \(\theta (h)\) by \(\theta_1(h)\) modulo \(2\pi\Z\). This could confirm the predicted braid structure mentioned in~\cite{FH-b,BDMV}. We will consider in detail three models where such a phenomenon holds (see Theorem~\ref{thm:HK-3wells}, Section~\ref{sec.Neumann}, and Section~\ref{sec.edge}). 

\section{Electro-magnetic tunneling}\label{sec.HK22}

\subsection{Introduction}

In this section, the  Hilbert space is \(H=L^2(\R^2)\),  and for given \(n\in\mathbb{N}\) we consider \(g = g_n\) to be rotation around the origin by \(2\pi/n\), and \(M(g)\) to be as in~\eqref{eq:def-Mgn}.  We are interested in the spectrum of the electro-magnetic Schr\"odinger operator
\[
  \mathcal L_{h,b}=(-\ii h\nabla-b\Ab)^2+V,
\]
where \(b,h>0\), and 
\begin{equation}\label{eq:A(x)}
  \Ab(x)=\frac12(-x_2, x_1)\,.
\end{equation}
Notice that \(\Ab\) generates the constant magnetic field \(\curl\Ab=1\). For the potential $V$ we assume that
\begin{subequations}
  \begin{equation}
    V\in C^\infty(\R^2,\R)\,,
  \end{equation}
  \begin{equation}\label{eq:V<0}
    V\leq 0\,,
  \end{equation}
  \begin{equation}\label{eq:sym}
    V \text{ is invariant by the rotation } g_n\,.
  \end{equation}
  Moreover, we assume that
  \begin{equation}
    \text{The minimum of } V \text{  is attained at } n  \text{ non-degenerate minima.}
  \end{equation}
\end{subequations}
Then it results from the invariance property of $V$ that  these minima are  \(n\) equidistant points of \(\R^2\setminus \{0\}\). We will refer to these points as the \emph{wells}.
 
Notice that, when dealing with a fixed \(b>0\) we can reduce  the analysis  to the case where \(b=1\) by introducing an effective semi-classical  parameter \(\hbar=b^{-1}h\) so that
\begin{equation}\label{eq:red-b=1}
  \mathcal L_{h,b}
  =
  b^2\bigl((-\ii \hbar\nabla-\Ab)^2+b^{-2}V\bigr).
\end{equation}
So we will assume henceforth that \(b=1\). To relate with the discussion in Section~\ref{sec.IM}, our operator \(T_h\) is  the electro-magnetic Laplacian shifted by a certain scalar \(\lambda(h)\).
\begin{equation}\label{eq:H-h}
  T_h
  \coloneqq 
  \mathcal L_h-\lambda(h),\quad 
  \mathcal L_h = (-\ii h\nabla-\Ab)^2 + V.
\end{equation}
Note that the  assumption in~\eqref{eq:sym} implies that $T_h$ commutes with $M(g)$. Hence the condition of invariance of the preceding section holds.
The shift constant \(\lambda(h)\) in~\eqref{eq:H-h} will be chosen as the ground state energy of a reference single well operator. 

\subsection{Single well ground states} 

Let us first discuss the one well operator.

\subsubsection{Preliminary discussion and assumptions}
There are various possible approaches to create one well problems in the presence of multiple wells.  The approach considered in~\cite{HeSj1,HeSj2}  starts from a general electric potential \(V\) and introduce suitable Dirichlet conditions to create an infinite barrier leading to  a one well problem. The so-called LCAO\footnote{for Linear Combination of Atomic Orbitals.} approach,  frequently used in Physics and Atomic Chemistry,   and considered in~\cite{FSW,HK22}, particularly applies when \(V\) is  a superposition of single well potentials.
 
As in~\cite{FSW,HK22},  we consider a radial single well potential.  More precisely,  we assume in this section that \(V=\mathfrak v_0\),  where  $\mathfrak v_0\in C_c^\infty(\R^2)$  is a non-positive radial function satisfying
\begin{equation}\label{eq:v0}
  \begin{cases}
  \mathfrak v_0(x)=v_0(|x|)
  ~\&~
  v_0^{\min}\coloneqq \min\limits_{r\geq 0}v_0(r)<0\,,\\
  \supp \mathfrak v_0\subset  \overline{D(0,a)}\coloneqq \{x\in\R^2\,:\,|x|\leq a\}\,,\\
  \mathfrak{v}_0^{-1}(v_0^{\mathrm{min}}) = \{0\}
  \quad\&\quad v_0''(0)>0\,.
\end{cases}
\end{equation}
We choose $\overline{D(0,a)}$ as the smallest  closed disc containing ${\rm supp}\,\mathfrak v_0$, i.e.
\begin{equation}\label{eq:def-a}
  a = a(v_0)\coloneqq \inf\{r>0~:~v_0|_{[r,+\infty)}=0\}.
\end{equation}

\subsubsection{Preliminary inequalities}
As in~\cite{HK22},   we  will encounter various errors of exponential order which are defined in terms of  the function \(v_0\) and the  constant  
\begin{equation}\label{eq:L2a} 
  L > 2a(v_0)\,.
\end{equation}
With $(v_0,L)$  as above,  \(a=a(v_0)\) and with
\begin{equation}\label{eq:defd}
  d(r)
  =
  d_{v_0} (r)
  \coloneqq 
  \int_0^{r}\sqrt{\frac{\rho^2}{4}+v_0(\rho)-v_0^{\min}}\dd\rho,
\end{equation}
we introduce the four constants (we will often skip the depends on \(v_0\) and \(L\))
\begin{equation}\label{eq:defSs}
  \left.
  \begin{aligned}
    S_0 = S_0(v_0,L)&\coloneqq d (L),\\
    S_a =S_a(v_0,L) &\coloneqq  d (a) + d (L-a),\\
    \hat{S}_a =\hat S_a(v_0,L)&\coloneqq  d (L-a), \\
    \hat S=\hat S(v_0,L)&\coloneqq \inf_{0<r<a}\biggl[ \frac{Lr}{2}+d (r)+d (L-r)\biggr].
  \end{aligned}
  \right\}
\end{equation}
Notice also that, since \(v_0\) vanishes on \([a,+\infty)\),  we can  express the constant \(S_a\) as follows
\[
  S_a
  =
  2\int_0^a\sqrt{\frac{r^2}{4}+v_0(r)-v_0^{\min}}\dd r+\int_0^{L-a}\sqrt{\frac{r^2}{4}-v_0^{\min}}\dd r.
\]
It is important when comparing the errors  to compare the three constants introduced in~\eqref{eq:defSs}. 

\begin{proposition}\label{prop:comp-SaS0}
Assume that \( v_0\) and \(L\) satisfy~\eqref{eq:def-a} and~\eqref{eq:L2a}.  Then we have, with $a=a(v_0)$,
\begin{equation}\label{eq:3.8}
  \hat S_a(v_0,L)
  <
  S_a(v_0,L)
  <
  \hat S(v_0,L)
  <
  \min\Bigl( S_0(v_0,L),\frac{La}{2}+S_a(v_0,L)\Bigr).
\end{equation}
Moreover,  if \(v_0\) and \(L\) satisfy \(L\geq 4a(v_0)\),  then \(2\hat S_a(v_0,L)>\hat S(v_0,L)\).
\end{proposition}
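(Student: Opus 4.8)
The inequalities in~\eqref{eq:3.8} all compare integrals of the same integrand
\[
  f(\rho)=\sqrt{\tfrac{\rho^2}{4}+v_0(\rho)-v_0^{\min}}\geq 0,
\]
so my strategy is to reduce each comparison to a statement about $d(r)=\int_0^r f$, using that $f$ is continuous, nonnegative, strictly positive away from $\rho=0$ (since $v_0(\rho)-v_0^{\min}>0$ for $\rho>0$ small and $\rho^2/4>0$ for $\rho>0$), and that $f(\rho)=\rho/2$ for $\rho\geq a$. Hence $d$ is strictly increasing and strictly convex-ish enough that $d(x)+d(y)<d(x+y)$ whenever $x,y>0$ — this superadditivity is the workhorse. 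I would first record the superadditivity lemma: for $x,y>0$,
\[
  d(x+y)-d(x)-d(y)=\int_0^y\big(f(x+t)-f(t)\big)\dd t>0,
\]
which holds because $f$ is nondecreasing (both $\rho^2/4$ and $v_0(\rho)-v_0^{\min}$... wait, $v_0$ need not be monotone) — so instead I argue $f(x+t)\geq \tfrac{x+t}{2}> f(t)$ is false in general; the clean route is: for $t\geq a$, $f(x+t)=\tfrac{x+t}{2}>\tfrac t2=f(t)$; and to handle $t<a$ I split the integral and use that $\int_0^{L-a}$-type tails only ever see the region $\rho\geq$ something, where $f(\rho)=\rho/2$ is strictly increasing. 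Concretely each of the four constants is an integral of $f$ over a union of intervals, and I would just compare the interval configurations directly.

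**Step-by-step.** (i) $\hat S_a<S_a$: immediate since $S_a-\hat S_a=d(a)>0$, as $a>0$ and $f>0$ on $(0,a)$. (ii) $S_a<\hat S$: write $\hat S=\inf_{0<r<a}[\tfrac{Lr}{2}+d(r)+d(L-r)]$; I claim the bracket exceeds $S_a=d(a)+d(L-a)$ for every $r\in(0,a)$, then take the infimum (attained or approached, but the strict inequality survives because the bracket is continuous on $[0,a]$ with value at $r=a$ equal to $\tfrac{La}{2}+d(a)+d(L-a)>S_a$ and value at $r=0$ equal to $2d(0)+... = d(L)=S_0$; actually I must check the inf is $>S_a$, not merely that each value is). Here I use $d(L-r)=d(L-a)+\int_{L-a}^{L-r}f=d(L-a)+\int_{L-a}^{L-r}\tfrac{\rho}{2}\dd\rho$ since $L-r\geq L-a\geq a$ (using $L>2a$), giving $d(L-r)-d(L-a)=\tfrac{(L-r)^2-(L-a)^2}{4}=\tfrac{(a-r)(2L-a-r)}{4}$. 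Similarly $d(a)-d(r)=\int_r^a f\dd\rho$. So the bracket minus $S_a$ equals $\tfrac{Lr}{2}-\int_r^a f\dd\rho+\tfrac{(a-r)(2L-a-r)}{4}$, and I bound $\int_r^a f\dd\rho\leq \int_r^a \tfrac{\rho}{2}\dd\rho+\int_r^a\sqrt{v_0(\rho)-v_0^{\min}}\dd\rho$... — cleaner: since $f(\rho)\le \tfrac{\rho}{2}+\sqrt{|v_0^{\min}|}$ is wasteful; better note $f(\rho)\leq$ value bounded by the integrand's max on $[0,a]$, but the sharp statement is that $\tfrac{Lr}{2}$ dominates because $L>2a>2\rho$, so $\tfrac{Lr}{2}>\int_0^r\tfrac{L}{2}\dd\rho\ge\int_0^r f$... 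I'll instead just use the defining property: the bracket at $r$ equals $\tfrac{Lr}{2}+d(r)+d(L-r)$ and I compare to $S_a$ by showing the derivative in $r$ is $\tfrac{L}{2}+f(r)-f(L-r)=\tfrac L2+f(r)-\tfrac{L-r}{2}=\tfrac r2+f(r)\geq 0$ — wait that shows the bracket is increasing, so its inf over $(0,a)$ is the limit at $r\to 0^+$, which is $d(0^+)+d(L)=d(L)=S_0$. Then $\hat S=S_0$?! That contradicts $\hat S<S_0$. So the infimum is NOT at the endpoint — meaning $\tfrac r2+f(r)\geq 0$ is the wrong derivative sign analysis, or the inf is genuinely interior. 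I need to recompute $\tfrac{d}{dr}d(L-r)=-f(L-r)$, correct, and $\tfrac{d}{dr}d(r)=f(r)$, so derivative $=\tfrac L2+f(r)-f(L-r)$, and with $L-r>a$ this is $\tfrac L2+f(r)-\tfrac{L-r}{2}=\tfrac r2+f(r)>0$, genuinely increasing, inf at $r=0^+$ equal to $d(L)=S_0$. So $\hat S=S_0$ and the claimed strict inequality $\hat S<S_0$ is between a value and its own infimum-limit — it should be $\hat S\le S_0$ with equality, unless the infimum in~\eqref{eq:defSs} is over the \emph{closed} interval or the minimum is attained strictly inside; I will re-read the definition and, if needed, use that $f(0^+)$ could be positive ($v_0(0)-v_0^{\min}=0$ but $v_0''(0)>0$ gives $f(\rho)\sim c\rho$, so $f(0)=0$ and $d$ is $C^1$ at $0$), so indeed $\hat S=\lim_{r\to0}[\cdots]=S_0$, forcing me to interpret the strict inequality as: the infimum over the \emph{open} interval is $S_0$ but is not attained, hence $\hat S=S_0$; I will flag this and prove the weak inequality $\hat S\le S_0$ plus attainment analysis, OR — more likely — I have mis-signed and the correct reading makes the bracket's infimum interior; I will determine this carefully from~\eqref{eq:defSs} as the main point below. (iii) $\hat S<\tfrac{La}{2}+S_a$: evaluate the bracket at $r=a$: $\tfrac{La}{2}+d(a)+d(L-a)=\tfrac{La}{2}+S_a$, and since the bracket at some $r<a$ is strictly smaller (by the derivative sign just discussed, or by the fact that the inf over the open interval doesn't include $r=a$), $\hat S<\tfrac{La}{2}+S_a$. (iv) The addendum $2\hat S_a>\hat S$ under $L\geq 4a$: $2\hat S_a=2d(L-a)$ and $\hat S\le$ bracket at $r=a/2$ (say) $=\tfrac{La}{4}+d(a/2)+d(L-a/2)$; I'd show $2d(L-a)>\tfrac{La}{4}+d(a/2)+d(L-a/2)$. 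Using $d(L-a/2)-d(L-a)=\int_{L-a}^{L-a/2}\tfrac\rho2\dd\rho=\tfrac{(L-a/2)^2-(L-a)^2}{4}=\tfrac{(a/2)(2L-3a/2)}{4}$, and $d(L-a)-d(a/2)=\int_{a/2}^{L-a}f\dd\rho$, reduce to showing $\int_{a/2}^{L-a}f\dd\rho>\tfrac{La}{4}+\tfrac{a(2L-3a/2)}{8}$; the left side is $\ge\int_{a}^{L-a}\tfrac\rho2\dd\rho=\tfrac{(L-a)^2-a^2}{4}=\tfrac{L(L-2a)}{4}$, and with $L\ge 4a$ one checks $\tfrac{L(L-2a)}{4}>\tfrac{La}{4}+\tfrac{a(2L-3a/2)}{8}$ by a direct polynomial inequality in $L,a$. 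I would optimize the choice of test $r$ (or take the actual minimizer) to make the constant work; $L=4a$ is the boundary case and the computation should be tight but valid.

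**Main obstacle.** The delicate point is step (ii), i.e. pinning down whether $\hat S$ is attained in the interior of $(0,a)$ and why it is strictly below both $S_0$ and $\tfrac{La}{2}+S_a$: since the integrand $f$ vanishes to first order at $0$, the naive monotonicity of the bracket $r\mapsto \tfrac{Lr}{2}+d(r)+d(L-r)$ suggests its infimum sits at $r\to 0$ with value $S_0$, which would \emph{violate} the claimed $\hat S<S_0$. Resolving this tension is the crux: either the strict inequality in~\eqref{eq:3.8} should be read with equality $\hat S=S_0$ in the limiting case (and the paper's $<$ refers to attainment by a genuine interior minimizer that I must exhibit using finer convexity of $d$ coming from $v_0\not\equiv 0$ on $(0,a)$), or there is a sign/definition subtlety in~\eqref{eq:defSs} that I have to read off correctly. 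I expect the resolution is that $d$ is strictly concave on $[0,a]$ in the relevant sense while the $\tfrac{Lr}{2}$ term is linear, so the bracket is strictly concave and its infimum over the open interval $(0,a)$ is attained at an endpoint limit — making $\hat S=\min(S_0,\ \tfrac{La}{2}+S_a)$ rather than strictly less. I will therefore state and prove the chain~\eqref{eq:3.8} with the understanding fixed by a careful reading of~\eqref{eq:defSs}, deriving the strict inequalities from strict positivity of $f$ on $(0,a)$ and strict monotonicity of $\rho\mapsto\rho/2$ on $[a,\infty)$, and treating the endpoint/attainment question as the one genuinely substantive lemma; the remaining inequalities (i), (iii), (iv) are then short computations with the explicit antiderivative $\rho^2/4$ on $[a,\infty)$.
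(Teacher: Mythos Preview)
Your central confusion stems from a single computational slip that you repeat throughout: you assert that \(f(\rho)=\rho/2\) for \(\rho\geq a\). This is false. For \(\rho\geq a\) one has \(v_0(\rho)=0\), but \(v_0^{\min}<0\), so
\[
  f(\rho)=\sqrt{\frac{\rho^2}{4}-v_0^{\min}}=\sqrt{\frac{\rho^2}{4}+|v_0^{\min}|}>\frac{\rho}{2}.
\]
Once this is corrected, your ``main obstacle'' evaporates. The derivative of the bracket \(r\mapsto \frac{Lr}{2}+d(r)+d(L-r)\) is \(\frac{L}{2}+f(r)-f(L-r)\); at \(r=0\) this equals \(\frac{L}{2}-\sqrt{\frac{L^2}{4}+|v_0^{\min}|}<0\). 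So the bracket is strictly decreasing near \(r=0\), its infimum over \((0,a)\) lies strictly below the boundary value \(d(L)=S_0\), and the strict inequality \(\hat S<S_0\) is genuine. Your monotonicity computation, your ``\(\hat S=S_0\)'' paradox, and your worry about endpoint attainment all arose from the wrong formula for \(f\) on \([a,\infty)\). Likewise your identity \(d(L-r)-d(L-a)=\frac{(L-r)^2-(L-a)^2}{4}\) used in steps (ii)--(iii) is incorrect for the same reason.

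For the addendum \(2\hat S_a>\hat S\) under \(L\geq 4a\), the paper's route is much shorter than yours and worth adopting: use the already-established upper bound \(\hat S<\frac{La}{2}+S_a\) together with \(S_a=\hat S_a+d(a)\) to write
\[
  2\hat S_a-\hat S>2\hat S_a-S_a-\frac{La}{2}=\int_a^{L-a}f(\rho)\dd\rho-\frac{La}{2}\geq \int_a^{L-a}\frac{\rho}{2}\dd\rho-\frac{La}{2}=\frac{L^2-4aL}{4}\geq 0,
\]
with strict inequality since \(f(\rho)>\rho/2\) on \([a,L-a]\). This avoids choosing a test point \(r=a/2\) and the ensuing polynomial bookkeeping. (Note also that the paper cites \cite[Prop.~3.5]{HK22} for the chain~\eqref{eq:3.8} and only proves the addendum here.)
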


\begin{proof}
The inequality \(\hat S_a<S_a\) is obvious. The other  inequalities  in~\eqref{eq:3.8} are proved in~\cite[Prop.~3.5]{HK22}. So assume that \(L>4a\) and let us prove that \(2\hat S_a>\hat S\).  Notice that \(S_a=\hat S_a+d(a)\) and \(\hat S<\frac{La}{2}+S_a\),  hence
\[
  2\hat S_a-\hat S
  >
  2\hat S_a-S_a-\frac{La}{2}
  =
  \int_a^{L-a}\underset{\geq\rho/2}{\uwave{\sqrt{\frac{\rho^2}{4}+v_0(\rho)-v_0^{\min}}}}\dd \rho-\frac{La}2
  \geq
  \frac{L^2-4aL}{4}>0.
\]
\end{proof}

\subsubsection{The one well approximate eigenfunction}

Consider the single well operator
\begin{equation}\label{eq:op-1well}
  \mathcal L_h^{\rm sw}
  =
  (-\ii h\nabla-\Ab)^2+\mathfrak v_0
\end{equation}
whose ground state energy is 
\begin{equation}\label{eq:gse-1well}
  \lambda(h)
  =
  \inf\sigma(\mathcal L_h).
\end{equation}
Since \(\mathfrak v_0\leq 0\), \(\mathcal L_h^{\rm sw}\) is smaller (in the sense of comparison of self-adjoint operators) than the Landau Hamiltonian \((-\ii h\nabla-\Ab)^2\). Hence  we have by the min-max principle  
\begin{equation}\label{eq:3.11a}
  \lambda(h)\leq h\,.
\end{equation}

In light of the conditions on \(v_0\) in~\eqref{eq:v0}, we know from~\cite[Thm.~1.1]{HK22} that \(\lambda(h)\) is a simple eigenvalue and that  \(\mathcal L_h^{\rm sw}\) has a positive radial ground state satisfying, for any  relatively compact domain $K$ of \(\R^2\), 
\begin{equation}\label{eq:d-main0}
  \mathfrak u_h(x)=u_h(|x|),\quad 
  \int_{\R^2}|\mathfrak u_h(x)|^2\dd x=1,\quad 
  \norm{ e^{d(\abs{x})/h}\mathfrak  u_h(x) }_{H^2(K)}=\mathcal O(h^{-1/2}),
\end{equation}
 
An important term in the context of tunneling is  the hopping coefficient defined  in~\cite{FSW, HK22} (in the case $n=2$) by
\begin{equation}\label{eq:hopping}
  \mathfrak c_h(v_0,L)
  =
  \int_{D(0,a)} \mathfrak v_0(x)\mathfrak u_h(x)\mathfrak u_h(x_1+L,x_2)\ee^{\frac{\ii Lx_2}{2h}}\dd x
\end{equation}
where \(a=a(v_0)\) (see~\eqref{eq:def-a}).

As observed in~\cite{FSW}, the hopping coefficient is a negative real number and an accurate estimate of it was established recently  in~\cite[Sec.~6 \& Eq.~(6.25)]{HK22},  which we recall below.\footnote{Notice that the estimate of \(\mathfrak c_h(v_0,L)\) does not require the assumption \(L>4\big(a(v_0) +\sqrt{v_0^{\min}}\big)\) imposed in~\cite{FSW} (see also~\cite[comments after Eq.~(5.2)]{HK22}).} 

\begin{proposition}\label{prop:HK22-hopping}
Assume that \(\mathfrak v_0\) and \(L\) satisfy~\eqref{eq:v0} and~\eqref{eq:L2a}.  Then there exists a positive constant \(S(v_0,L)\) such that
\begin{equation}\label{eq:3.14-a}
  \lim_{h\searrow0}\bigl(h\ln|\mathfrak c_h(v_0,L)|\bigr)=-S(v_0,L).
\end{equation}
Moreover,   
\begin{equation}\label{eq:3.14a}
  S_a(v_0,L)<S(v_0,L)<\hat S(v_0,L)
\end{equation}
where \(S_a(v_0,L)\) and \(\hat S(v_0,L)\) are introduced in (\ref{eq:defSs}).
\end{proposition}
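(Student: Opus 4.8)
The plan is to combine the WKB approximation of the single-well ground state $\mathfrak u_h$ with a (complex) stationary-phase analysis of the oscillatory integral~\eqref{eq:hopping}. Since $\mathfrak u_h(x)=u_h(|x|)$ solves the radial reduction of $\mathcal L_h^{\rm sw}$, whose effective potential $W(r)=\frac{r^2}{4}+v_0(r)-v_0^{\min}$ is non-negative and vanishes non-degenerately only at $r=0$, the WKB expansion established in~\cite{HK22} (refining the Agmon bound~\eqref{eq:d-main0}) gives, uniformly on compact subsets of $(0,+\infty)$,
\[
  \mathfrak u_h(x)=h^{-1/2}\bigl(\alpha_0(|x|)+\mathcal O(h)\bigr)\ee^{-d(|x|)/h},\qquad \alpha_0>0,
\]
with $d$ as in~\eqref{eq:defd}. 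Inserting this into~\eqref{eq:hopping} and using $\mathfrak v_0(x)=v_0(|x|)$ together with $\mathfrak u_h(x_1+L,x_2)=u_h(|x+L\eb_1|)$ (where $\eb_1=(1,0)$), one obtains
\[
  \mathfrak c_h(v_0,L)=h^{-1}\int_{D(0,a)}\bigl(g(x)+\mathcal O(h)\bigr)\ee^{-\phi(x)/h}\dd x,\qquad
  \phi(x)=d(|x|)+d\bigl(|x+L\eb_1|\bigr)-\frac{\ii Lx_2}{2},
\]
with $g(x)=v_0(|x|)\,\alpha_0(|x|)\,\alpha_0(|x+L\eb_1|)\leq 0$.

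The naive bound $|\ee^{-\phi(x)/h}|=\ee^{-\re\phi(x)/h}$ with $\inf_{\overline{D(0,a)}}\re\phi=S_a$ (attained at $x=-a\eb_1$, by monotonicity of $d$ and $|x+L\eb_1|\geq L-|x|$) yields only $h\ln|\mathfrak c_h|\leq-S_a+o(1)$; the extra exponential decay comes from the oscillatory factor. To capture it, I would treat $\phi$ by complex stationary phase in $x_2$ on each fibre: the fibre phase $-\re\phi(x_1,\cdot)+\frac{\ii Lx_2}{2}$ has a complex critical point at $x_2^\ast(x_1)=\ii L/(2\beta(x_1))$, where $\beta(x_1)=\partial_{x_2}^2\re\phi(x_1,0)=\frac{d'(|x_1|)}{|x_1|}+\frac{d'(|x_1+L|)}{|x_1+L|}>0$, contributing a factor $\ee^{-L^2/(8\beta(x_1)h)}$. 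This reduces the problem to a one-dimensional Laplace integral in $x_1=-\rho\in(-a,0)$ with phase $f(\rho)=d(\rho)+d(L-\rho)+\frac{L^2}{8\beta(\rho)}$ (here $\beta(\rho):=\frac{d'(\rho)}{\rho}+\frac{d'(L-\rho)}{L-\rho}$) and amplitude proportional to $v_0(\rho)<0$, whence
\[
  h\ln|\mathfrak c_h(v_0,L)|\underset{h\searrow0}{\longrightarrow}-S(v_0,L),\qquad S(v_0,L)=\min_{0\leq\rho\leq a}f(\rho);
\]
the matching lower bound follows because the amplitude does not vanish at the minimizer and $f$ is non-degenerate there.

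It remains to place $S(v_0,L)$ strictly between $S_a$ and $\hat S$. The lower bound is elementary: $\rho\mapsto d(\rho)+d(L-\rho)$ is decreasing on $[0,a]$ — its derivative $d'(\rho)-d'(L-\rho)$ is negative because $L-\rho>\rho$ and $v_0\leq0$ force $d'(L-\rho)>d'(\rho)$ — with minimum $S_a=d(a)+d(L-a)$, while $\frac{L^2}{8\beta(\rho)}\geq\frac{L^2}{8\max_{[0,a]}\beta}=:c_0>0$; hence $f\geq S_a+c_0>S_a$. The upper bound $S<\hat S$ is more delicate and reduces, after evaluating $f$ at a suitable $\rho$, to a pointwise comparison of $\frac{L^2}{8\beta(\rho)}$ with $\frac{L\rho}{2}$ using $d'(r)=\sqrt{r^2/4+v_0(r)-v_0^{\min}}$ and $L>2a$; this is carried out in~\cite[Sec.~6]{HK22} in the same spirit as Proposition~\ref{prop:comp-SaS0}.

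I expect the main obstacle to be the rigorous justification of the complex stationary-phase step: since $v_0$, and hence the amplitude $g$ and (through $d$) the phase $\phi$, are only $C^\infty$ and not analytic, one cannot deform the $x_2$-contour through the saddle $x_2^\ast(x_1)$ by holomorphy. One instead uses almost-analytic extensions in $x_2$ and controls the $\bar\partial$-errors generated by the deformation, in the Helffer--Sjöstrand manner, which is precisely the technical core of~\cite[Sec.~6]{HK22}; the matching \emph{lower} bound, needed so that the limit (not merely a $\limsup$) exists, requires in addition a careful localization showing that the Laplace contribution at the dominant $\rho\in(0,a)$ survives the oscillation. The $\mathcal O(h)$ WKB remainder and the infinite-order vanishing of $v_0$ near $|x|=a$ are routine to absorb and do not affect the exponential rate.
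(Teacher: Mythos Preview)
The paper does not give its own proof of this proposition: it is quoted from~\cite[Sec.~6 \& Eq.~(6.25)]{HK22}, with the explicit formula~\eqref{eq:def-S(v0)**}--\eqref{eq:def-Psi} for $S(v_0,L)$ recalled immediately afterwards. The method used there is, however, visible in the present paper through the proof of Proposition~\ref{lem:HK-int}, and it is \emph{not} the complex stationary phase scheme you describe. The key observation is that for $x\in D(0,a)$ one has $|x+L\eb_1|\geq L-a>a$, so $\mathfrak u_h(x+L\eb_1)$ is evaluated outside $\supp\mathfrak v_0$, where the radial equation is that of the pure Landau Hamiltonian and $u_h(\rho)$ admits an \emph{exact} integral representation (\cite[Eq.~(2.9)]{FSW}). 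After passing to polar coordinates, the $\theta$-integral is then computed exactly in terms of the Bessel function $I_0$ (see~\eqref{eq:int-Kh}--\eqref{eq:def-Gh}), which turns the hopping coefficient into a genuine \emph{real} Laplace integral in $(r,t)$ with phase $\Psi(r,t)$ from~\eqref{eq:def-Psi}. No contour deformation, no almost-analytic extensions, and no Helffer--Sj\"ostrand machinery are needed; your description of ``the technical core of~\cite[Sec.~6]{HK22}'' is inaccurate in this respect.

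More importantly, your own reduction has a gap. The formula
\[
  f(\rho)=d(\rho)+d(L-\rho)+\frac{L^2}{8\beta(\rho)},\qquad \beta(\rho)=\frac{d'(\rho)}{\rho}+\frac{d'(L-\rho)}{L-\rho},
\]
comes from the \emph{quadratic} Taylor expansion of $\re\phi(x_1,\cdot)$ at $x_2=0$. Since $d'(r)/r$ is bounded, one has $\beta(\rho)=\mathcal O(1)$ and hence the putative saddle $x_2^\ast=\ii L/(2\beta(\rho))$ sits at distance of order $L$ from the real axis, i.e.\ well outside the disc $D(0,a)$ and far beyond the regime where the second-order Taylor polynomial of $d(\sqrt{\rho^2+x_2^2})+d(\sqrt{(L-\rho)^2+x_2^2})$ in $x_2$ is a controlled approximation of the full phase. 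There is therefore no reason for $\min_{[0,a]}f$ to coincide with the actual exponential rate, and indeed the paper's formula $S(v_0,L)=-F(v_0)+\inf_{(r,t)\in[0,a]\times(0,\infty)}\Psi(r,t)$ does not reduce to your $f$. In short, replacing the fibre integral by its Gaussian model throws away the very contribution that determines $S(v_0,L)$; a correct argument along your lines would have to locate the true complex critical point of the full phase and evaluate it there, which in the absence of analyticity is exactly what the integral-representation trick of~\cite{FSW,HK22} is designed to bypass. Your argument for $S>S_a$ is fine once the correct reduced phase is in hand (and is close in spirit to the computation in Appendix~\ref{sec:A}), but as written it rests on the wrong $f$.
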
  

We shall use Proposition~\ref{prop:HK22-hopping} later on in the proof of  Proposition~\ref{prop:HK-sym} when dealing with \(n\) potential wells ($n\geq 2$). Let us recall the  definition of \(S(v_0,L)\) given in~\cite[Eq.~(6.18) and (6.20)]{HK22}. 
We have 
\begin{equation}\label{eq:def-S(v0)**}
  S(v_0,L)
  \coloneqq 
  -F(\mathfrak  v_0)+\inf_{\substack{r\in[0,a]\\ t\in(0,+\infty)}}\Psi(r,t),
\end{equation}
where $a=a(v_0)$ is introduced in~\eqref{eq:def-a} and, with $d$ introduced in~\eqref{eq:def-a}, 
\begin{equation}\label{eq:def-Psi}
  \left.
  \begin{aligned}
    F(v_0)
    &\coloneqq 
    \frac{a}{4}\sqrt{a^2+4|v_0^{\min}|} + \frac{|v_0^{\min}|}{2} \ln\frac{\big(\sqrt{a^2+4|v_0^{\min}|}+a\big)^2 }{4|v_0^{\min}|}-d(a),\\
    \Psi(r,t)
    &\coloneqq 
    d(r)+\frac{r^2+L^2}{4}(2t+1)+\frac{|v_0^{\min}|}{2}\ln\left(1+\frac1{t}\right)-Lr\sqrt{t(t+1)}.
  \end{aligned}
  \right\}
\end{equation}
The following proposition deals with a term similar to the hopping coefficient and plays a key role in the approximation of various error terms that we will encounter later, e.g.  when we verify Assumption~\ref{ass1} for an electro-magnetic Schr\"odinger operator with multiple wells, as in Proposition~\ref{prop:HK-ass1}.  

\begin{proposition}\label{lem:HK-int}
We have, as $h\to 0$,
\[
  w
  \coloneqq 
  \int_{D(0,L)} \mathfrak u_h(x)\mathfrak u_h(x_1+L,x_2)\ee^{\frac{\ii Lx_2}{2h}}\dd x 
  =
  \mathcal O\left(\ee^{\frac{-S_a+o(1)}{h}}\right),
\]
where $a=a(v_0)$,  $L >2a$ and $S_a$ is defined in~(\ref{eq:defSs}b).
\end{proposition}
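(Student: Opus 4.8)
The plan is to insert the pointwise weighted bound for the single well ground state $\mathfrak u_h$ from~\eqref{eq:d-main0} into the integral, split $D(0,L)$ according to the two wells relevant for the integrand ($0$ and $(-L,0)$), estimate the two pieces lying near these wells by a crude amplitude bound, and treat the remaining ``channel'' piece by exploiting the oscillation $\ee^{\ii Lx_2/2h}$ via a complex deformation, exactly as in the analysis of the hopping coefficient $\mathfrak c_h$ in~\cite[Sec.~6]{HK22}. Set $\tilde\psi(x):=\mathfrak u_h(x_1+L,x_2)\,\ee^{\ii Lx_2/2h}$, so that $w=\int_{D(0,L)}\mathfrak u_h\,\tilde\psi\dd x$. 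From~\cite[Thm.~1.1]{HK22} and~\eqref{eq:d-main0} (using the Sobolev embedding $H^2\hookrightarrow L^\infty$ in dimension two on the fixed compact set $\overline{D(0,2L)}$) one has $|\mathfrak u_h(x)|\le Ch^{-1/2}\ee^{-d(|x|)/h}$, and hence $|\tilde\psi(x)|\le Ch^{-1/2}\ee^{-d(|x+(L,0)|)/h}$. Since $L>2a$ the closed discs $\overline{D(0,a)}$ and $\overline{D((-L,0),a)}$ are disjoint; I would split $w=w_0+w_L+w_R$, with $w_0=\int_{D(0,a)}\mathfrak u_h\tilde\psi\dd x$, $w_L=\int_{D((-L,0),a)\cap D(0,L)}\mathfrak u_h\tilde\psi\dd x$, and $w_R=\int_R\mathfrak u_h\tilde\psi\dd x$, where $R:=D(0,L)\setminus\bigl(\overline{D(0,a)}\cup\overline{D((-L,0),a)}\bigr)$.

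For $w_0$ the amplitude bound gives $|\mathfrak u_h(x)\tilde\psi(x)|\le Ch^{-1}\ee^{-(d(|x|)+d(|x+(L,0)|))/h}$, and a short elementary computation — using $d(|x|)\ge d(|x_1|)$ and $d(|x+(L,0)|)\ge d(|x_1+L|)$, the monotonicity of $d$, and the strict monotonicity of $d'(r)=\sqrt{r^2/4+|v_0^{\min}|}$ on $[a,\infty)$ — shows that $x\mapsto d(|x|)+d(|x+(L,0)|)$ attains its minimum over $D(0,a)$ at $x=(-a,0)$, with value $d(a)+d(L-a)=S_a$. Hence $|w_0|\le Ch^{-1}|D(0,a)|\,\ee^{-S_a/h}=\mathcal O(\ee^{(-S_a+o(1))/h})$. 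The same argument with the roles of the two factors exchanged (the minimizer now being $x=(-(L-a),0)$, again of value $S_a$) gives $|w_L|=\mathcal O(\ee^{(-S_a+o(1))/h})$.

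The delicate term is $w_R$: on $R$ the amplitude bound only yields $\mathcal O(\ee^{-2d(L/2)/h})$, and by the convexity of $d$ on $[a,\infty)$ together with $L>2a$ one has $2d(L/2)<S_a$, so the oscillation must genuinely be used. Both $\mathfrak u_h$ and $x\mapsto\mathfrak u_h(x+(L,0))$ are real-analytic on a neighbourhood of $\overline R$ (being radial solutions of ODEs with analytic coefficients away from the supports of $\mathfrak v_0$ and of its translate), so the integrand of $w_R$ equals $\ee^{-\Phi(x)/h}$ times an amplitude of sub-exponential size, with $\Phi(x)=d(|x|)+d(|x+(L,0)|)-\tfrac{\ii}2Lx_2$ holomorphic on a complex neighbourhood of $\overline R$ (here one uses $d(r)=\tilde d(r^2)$ with $\tilde d$ holomorphic near $[a^2,(L+a)^2]$, a consequence of $v_0''(0)>0$ and of $r^2/4+v_0(r)-v_0^{\min}>0$ for $r>0$). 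On the real contour $\min_{\overline R}\re\Phi=2d(L/2)$, attained at $x=(-L/2,0)$, where however $\partial_{x_2}\Phi=-\tfrac{\ii}2L\neq0$: the phase is non-stationary, and pushing the $x_2$-contour into the complex plane (the same manipulation carried out for $\mathfrak c_h$ in~\cite[Sec.~6]{HK22}, \cite{FSW}) trades the gain $\ee^{L\sigma/2h}$ from the term $-\tfrac{\ii}2Lx_2$ against the variation of $\re\bigl(d(|x|)+d(|x+(L,0)|)\bigr)$, together with the complex-Agmon bounds of~\cite{HK22} on the shifted contour; since $\re\Phi=S_a$ on the inner boundaries $\partial D(0,a)$ and $\partial D((-L,0),a)$ at the channel-facing points, this raises the effective rate to $S_a$, giving $|w_R|=\mathcal O(h^{-N}\ee^{-S_a/h})=\mathcal O(\ee^{(-S_a+o(1))/h})$. (In fact the sharper rate $\hat S(v_0,L)>S_a$ of~\eqref{eq:defSs}, bounded below via Proposition~\ref{prop:comp-SaS0}, should be reachable, but that is not needed here.)

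Adding the three estimates gives $|w|\le|w_0|+|w_L|+|w_R|=\mathcal O(\ee^{(-S_a+o(1))/h})$, the claim. The main obstacle is the channel term $w_R$: without the oscillatory factor it contributes $\ee^{-2d(L/2)/h}$, which overshoots the target $\ee^{-S_a/h}$, so the complex-deformation / complex-Agmon machinery of~\cite{HK22} is genuinely needed. Since only the exponential rate $S_a$ is at stake — not a sharp constant, and not the finer rate $S\in(S_a,\hat S)$ obtained for $\mathfrak c_h$ using the vanishing of $\mathfrak v_0$ on $\partial D(0,a)$ — a soft version of that argument suffices; in particular it is enough to push $\re\Phi$ up to the level $S_a$ along the deformed contour.
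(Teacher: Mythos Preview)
Your overall decomposition and the treatment of the near-well pieces $w_0$ and $w_L$ are correct and essentially match what the paper does for its pieces $w_1$ and $w_3$ (the paper uses the concentric annuli $D(0,a)$, $D(0,L-a)\setminus D(0,a)$, $D(0,L)\setminus D(0,L-a)$ instead of your two small discs plus channel, but the amplitude argument reducing to $\min_{r\in[0,a]}\bigl(d(r)+d(L-r)\bigr)=S_a$ is the same). One minor point: your justification ``the strict monotonicity of $d'(r)=\sqrt{r^2/4+|v_0^{\min}|}$ on $[a,\infty)$'' is not quite what is used; what one needs is $d'(r)<d'(L-r)$ for $0<r<a$, which follows from $v_0\le 0$ and $r<L-r$.

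The genuine gap is your treatment of the channel term $w_R$. You correctly observe that the amplitude bound alone only gives $2d(L/2)<S_a$, so the oscillation must be used. But the ``complex deformation in $x_2$'' you sketch is not carried out, and the references you invoke do not perform this manipulation: \cite{FSW} and \cite[Sec.~6]{HK22} analyze $\mathfrak c_h$, whose domain of integration is $D(0,a)$ --- there is no channel piece there --- and their method is not a spatial contour shift. Moreover, you appeal to ``complex-Agmon bounds of~\cite{HK22} on the shifted contour'', but \eqref{eq:d-main0} is a real $H^2$ bound and no control of the holomorphic extension of $\mathfrak u_h$ is established in that paper. Finally, asserting that the deformation ``raises the effective rate to $S_a$'' because $\re\Phi=S_a$ at the channel-facing boundary points is a heuristic, not a proof: you would need to exhibit a deformation on which $\re\Phi\ge S_a$ globally and control the amplitude along it.

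The paper's route for its channel piece $w_2$ is quite different and fully explicit. For $a<r<L-a$ one has $\sqrt{r^2+L^2+2Lr\cos\theta}>a$, so the integral representation of $u_h$ outside the support (\cite[Eq.~(2.9)]{FSW}) applies; the $\theta$-integral of $K_h(r,\theta)$ is then computed in closed form as a one-dimensional integral involving the modified Bessel function $I_0$ (see \eqref{eq:int-Kh}--\eqref{eq:def-Gh}). This recasts $w_2$ as a Laplace-type integral in $(r,t)$ with phase $\Psi(r,t)-F(v_0)$, and the required bound reduces to the inequality $-F(v_0)+\inf_{[a,L-a]\times\R_+}\Psi\ge S_a$, proved by elementary calculus in Appendix~\ref{sec:A}. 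If you want to salvage your argument, this is the machinery to import; a bare spatial contour deformation would require analytic control of $\mathfrak u_h$ that is not available in the cited works.
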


\begin{proof}
We can estimate \(w\) in the same way as the hopping coefficient \(\mathfrak c_h(\mathfrak v_0,L)\) was estimated in~\cite[Prop.~5.1]{HK22}. 

The only difference is that in the expression of \(\mathfrak c_h(v_0,L)\) (see~\eqref{eq:hopping})  we encounter the potential energy term \(v_0\) in the integrand and the integral is consequently over the smaller disc \(D(0,a)\).  Hence the new term to estimate corresponds to $w_2$  below, $w_3$ being of the same type as $w_1$ after a symmetry.

We decompose the integral defining \(w\) into three terms
\[
  \begin{aligned}
    w&=w_1+w_2+w_3\,,\\
    w_1&=\int_{D(0,a)} \mathfrak u_h(x)\mathfrak u_h(x_1+L,x_2)\ee^{\frac{\ii Lx_2}{2h}}\dd x\,, \\
    w_2&=\int_{D(0,L-a)\setminus D(0,a)} \mathfrak u_h(x)\mathfrak u_h(x_1+L,x_2)\ee^{\frac{\ii Lx_2}{2h}}\dd x\,, \\
    w_3&=\int_{D(0,L)\setminus D(0,L-a)} \mathfrak u_h(x)\mathfrak u_h(x_1+L,x_2)\ee^{\frac{\ii Lx_2}{2h}}\dd x\,. 
  \end{aligned}
\]

\textbf{Step 1: Contribution of the integral in  \(D(0,a)\).} We express the integral defining \(w_1\) in polar coordinates
\begin{equation}\label{eq:w-int1}
  w_1
  =
  \int_0^{2\pi}\int_0^{a} u_h(r) u_h\bigl(\sqrt{r^2+L^2+2Lr\cos\theta}\bigr) \ee^{\frac{\ii Lr\sin\theta}{2h}} r\dd r.
\end{equation}
In light of the decay property in~\eqref{eq:d-main0}, we get
\[
  w_1=\mathcal O(\ee^{-\tilde S/h}),
\]
where
\[
  \tilde S
  =
  \inf_{0<\theta<2\pi}
  \Big\{
    \inf_{0<r<a}
      \bigl[
        d(r)+d(\sqrt{r^2+L^2+2Lr\cos\theta})
      \bigr]
  \Big\}.
\]
Since \(\sqrt{r^2+L^2+2Lr\cos\theta}>L-r\) and since \(r\mapsto d(r)\) is non-decreasing,
\[
  \tilde S
  \geq
  \inf_{0<r<a}\bigl(d(r)+d(L-r) \bigr)
  =
  d(a) + d(L-a)
  =
  S_a
\]
where \(S_a\) is introduced in (\ref{eq:defSs}b). Notice that the penultimate identity follows from the fact that
\[
  \frac{\dd}{\dd r}[d(r) + d(L-r)]
  =
  \sqrt{\frac{r^2}{4}+v_0(r)-v_0^{\min}} 
  - \sqrt{\frac{(L-r)^2}{4}-v_0^{\min}} 
  < 
  0 
  \quad (0<r<a).
\]
Therefore, we have that
\begin{equation}\label{eq:3.16}
  w_1
  =
  \mathcal O(\ee^{-S_a/h}).
\end{equation}

\textbf{Step 2: Contribution of the integral in  \(D(0,L)\setminus D(0,L-a)\).} We argue as in Step 1. Expressing the integral defining \(w_3\) in polar coordinates
\begin{equation}\label{eq:w-int3}
  w_3
  =
  \int_0^{2\pi}\int_{L-a}^{L}  u_h(r) u_h\bigl(\sqrt{r^2+L^2+2Lr\cos\theta}\bigr) \ee^{\frac{\ii Lr\sin\theta}{2h}} r\dd r,
\end{equation}
we get
\[
  w_3=\mathcal O(\ee^{-\tilde S'/h}),
\]
where
\[
  \begin{aligned}
    \tilde S'&=\inf_{0<\theta<2\pi}\left(\inf_{L-a<r<L}\Bigl(d(r)+d\bigl(\sqrt{r^2+L^2+2Lr\cos\theta}\bigr) \Bigr)\right)\\
    &\geq \inf_{L-a<r<L}\bigl(d(r)+d(L-r) \bigr)
    =\inf_{0<t<a}\bigl(d(t)+d(L-t) \bigr)=S_a.
  \end{aligned}
\] 
Therefore, we have that
\begin{equation}\label{eq:3.18}
  w_3 = \mathcal O(\ee^{-S_a/h}). 
\end{equation}

\textbf{Step 3: Contribution of the integral in   \(D(0,L-a)\setminus D(0,a)\).} We express the integral defining \(w_2\) as follows
\begin{equation}\label{eq:def-w2}
  w_2
  =
  \int_a^{L-a}  u_h(r) \biggl(\int_0^{2\pi}K_h(r,\theta)d\theta\biggr)r\dd r\,, 
\end{equation}
where
\[
  K_h(r,\theta)= u_h\left(\sqrt{r^2+L^2+2Lr\cos\theta}\right) e^{\frac{\ii Lr\sin\theta}{2h}}\,.
\] 
Observing that for \(a<r<L-a\), we have \(a<L-r<L-a\) and
\[
  \rho\coloneqq \sqrt{r^2+L^2+2Lr\cos\theta}\geq L-r>a,
\] 
so \(u_h(\rho)\) has a nice integral representation~\cite[Eq.~(2.9)]{FSW}. Consequently,  the integral of $K_h$ with respect to  $\theta$ is computed as in~\cite[Prop.~5.1]{FSW}.  In fact,  we have    (see~\cite[Eq.~(5.9)-(3.10)]{HK22})
\begin{equation}\label{eq:int-Kh} 
  \int_0^{2\pi}K_h(r,\theta)d\theta 
  =
  C_h\exp\left( -\frac{r^2+L^2}{4h}\right)\int_0^{+\infty} G_h(r,t) \dd t\,,
\end{equation}
where 
\begin{equation}\label{eq:def-Gh}
  G_h(r,t)
  =
  \exp\left( -\frac{(r^2+L^2)t}{2h} \right)
  t^{\alpha-1}(1+t)^{-\alpha}I_0\left(\frac{Lr\sqrt{t(t+1)}}{h}\right),
\end{equation}
\(C_h,\alpha\) are constants and  $z\mapsto I_0(z)$ is the modified Bessel's function of order $0$ (see~\cite[Lem.~5.2]{HK22}).

Let \(\eta\in(0,1)\) and observe that~\cite[Lem.~6.1]{HK22},
\begin{multline*}
  \int_a^{L-a}  u_h(r) \biggl(C_h\exp\biggl( -\frac{r^2+L^2}{4h}\biggr)\int_0^{\eta} G_h(r,t) dt\biggr)r\dd r \\
  =
  \mathcal O(\ee^{c\sqrt{\eta}/h})\int_a^{L-a} u_h(r)u_h(\sqrt{L^2+r^2}) r\dd r. 
\end{multline*}
By the decay properties of \(u_h\) in~\eqref{eq:d-main0}, we get  (using also~\eqref{eq:3.8} for the last estimate)
\begin{align*}
  \int_a^{L-a}  u_h(r) \biggl(C_h\exp\biggl( -\frac{r^2+L^2}{4h}\biggr)\int_0^{\eta} G_h(r,t) dt\biggr)r\dd r 
  & =
  \mathcal O(\ee^{(c\sqrt{\eta}-S_0)/h}) \\
  & =\mathcal O(\ee^{-S_a/h}),
\end{align*}
for sufficiently small \(\eta\).

Now we deal with the following integral 
\[
  \int_a^{L-a}  u_h(r) \biggl(C_h\exp\biggl( -\frac{r^2+L^2}{4h}\biggr)\int_{\eta}^{+\infty} G_h(r,t) dt\biggr)r\dd r,
\]
which is asymptotically equivalent to~\cite[Lem.\ 6.4]{HK22}
\[
  \frac{\mathfrak  m( v_0)}{\sqrt{2\pi h}}\int_\eta^a \sqrt{r}\,a_0(r)\int_\eta^{+\infty} g_0(t)\exp\biggl(-\frac{\Psi(r,t)-F(v_0)}{h}\biggr)\dd t\dd r,
\]
where \(F(v_0),\Psi(r,t)\) are introduced in~\eqref{eq:def-Psi} and $\mathfrak m(v_0),g_0(t)$ are introduced in~\cite[Eq.\ (5.3) and (6.5)]{HK22} as follows
\[
  \begin{aligned}
    \mathfrak{m}(v_0)
    &= 
    \frac{\sqrt{1+2v_0''(0)}}{2\pi}\sqrt{\frac{2a|v_0^{\min}|}{\pi}} \frac{\left(a^2+4|v_0^{\min}| \right)^{1/4} }{\sqrt{a^2+4|v_0^{\min}|}+a},\\
    g_0(t)
    &=
    \frac1{t^{5/4}(t+1)^{1/4}}
    \left(1+\frac1{t}\right)^{\frac12(\sqrt{1+2v_0''(0)}-1)}.
  \end{aligned}
\]
Of importance to us is that
\[
  -F(\mathfrak v_0)+\inf_{(r,t)\in[a,L-a]\times\R_+}\Psi(r,t)
  \geq
  S_a,
\]
which follows by the same argument as used in the proof of~\cite[Prop.~6.5]{HK22}; for convenience,  we provide details in Appendix~\ref{sec:A}.

We get eventually 
\begin{equation}\label{eq:3.23}
  w_2
  =
  \mathcal O(\ee^{(-S_a+o(1))/h}).
\end{equation}
With~\eqref{eq:3.16} and~\eqref{eq:3.18},  this achieves the proof of the proposition.
\end{proof}
  
\subsection{Verifying Assumption~\ref{ass1}---superposition of single well potentials}

We study the specific case where  the potential  \(V\) is given by
\begin{equation}\label{eq:V}
  V(x)
  =
  \sum_{k=1}^n \mathfrak v_0(x-z_k)\,,
\end{equation}
where  \(\mathfrak v_0\) is the non-positive radial function satisfying~\eqref{eq:v0} and (we identify $\mathbb C$ and $\mathbb R^2$)
\begin{equation}\label{eq-def-z}
  z_k
  \coloneqq
  \frac{L}{\sqrt{2-2\cos(2\pi/n)}}\ee^{2\ii k\pi/n},\quad L>0.
\end{equation}
The wells in this setting are the points \(\{z_k\}_{1\leq k\leq n}\) which are selected so that
\begin{equation}\label{eq:dist-z}
  \dist(z_k,z_{k+1})
  =
  L.
\end{equation}
Let us verify that $V$ satisfies~\eqref{eq:sym}. Our construction of  the points \(z_k\) is such that \(z_{k+1}=g z_k\), for \(k\in\Z/n\Z\). Since \(\mathfrak v_0\) is radial, we have
\[
  \mathfrak v_0(g^{-1}x-z_k)
  =
  v_0(|g^{-1}(x-g z_k)|)
  =
  \mathfrak v_0(x-z_{k+1}).
\]
Consequently,~\eqref{eq:sym} holds and  \(T_h\) commutes with \(M(g)\).  We still have to check that Assumption~\ref{ass1} holds with the choice in Assumption~\ref{ass:symbis} (3).

We introduce the functions
\begin{equation}\label{eq:def-un-HKa}
  u_{h,k}(x)
  =
  \chi(x-z_k)\mathfrak u_h(x-z_k)\ee^{-\ii z_k\cdot\Ab(x)/h}\quad(1\leq k\leq n),
\end{equation}
where 
\begin{itemize}
  \item \((z_k)_{1\leq k\leq n}\) are the points introduced in~\eqref{eq-def-z};
  \item \(\Ab\) is the vector field introduced in~\eqref{eq:A(x)};
  \item \(\chi\in C_c^\infty(\R^2;[0,1])\) is a  radial cut-off function satisfying \(\chi=1\) on \(D(0,L)\) and \(\supp \chi\subset D(0,L+\eta)\)   with \(\eta\in(0,1)\)  fixed arbitrarily.
\end{itemize}
The phase term in~\eqref{eq:def-un-HKa} is due to the effect of \emph{magnetic} translation, which ensures that \(\psi_{h,k}(x)=\mathfrak u_h(x-z_k)\ee^{-\ii z_k\cdot\Ab(x)/h}\) satisfies
\begin{equation}\label{eq:Lh-psi}
  \mathcal L_{h,k}^{\rm sw} \psi_{h,k}
  =
  \lambda(h)\psi_{h,k}
  \quad \text{where}\quad
  \mathcal L_{h,k}^{\rm sw}
  \coloneqq (-\ii h\nabla-\Ab)^2+\mathfrak v_0(\cdot-z_k).
\end{equation}
The above constructions ensures that Assumption~\ref{ass:symbis} (3) holds (since  \(z\cdot\Ab(g_n^{-1}x)=(g_nz)\cdot\Ab(x)\)).  Moreover, the next proposition shows that Assumption~\ref{ass1} holds too.

\begin{proposition}\label{prop:HK-ass1}
Let \(T_h\) and \(\lambda(h)\) be as  in~\eqref{eq:H-h} and~\eqref{eq:gse-1well} respectively.   The conditions in Assumption~\ref{ass1} hold with the following choices: 
\begin{enumerate}[\rm (a)]
  \item  \(\{u_{h,1},\ldots,u_{h,n}\}\) are as in~\eqref{eq:def-un-HKa};
  \item any constants \(\mathfrak S_1,\mathfrak S_2,\mathfrak S_3,p,q\)  satisfying
  \begin{equation}\label{eq:condhatS}
    \mathfrak S_1\in(0,\hat S_a),\quad \mathfrak S_2\in (0, 2\hat S_a), \quad \mathfrak S_3\in (0, \hat S_a ),\quad  p\in(0,1],\quad q\in(1,2),
  \end{equation}
  where \(\hat S_a\) is introduced in (\ref{eq:defSs}c).
\end{enumerate}
\end{proposition}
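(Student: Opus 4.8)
The plan is to verify the four conditions of Assumption~\ref{ass1} one by one for the quasi-modes $u_{h,k}$ given in~\eqref{eq:def-un-HKa}, relying on the decay estimate~\eqref{eq:d-main0} for the single-well ground state $\mathfrak u_h$, the Agmon-type distance $d$ introduced in~\eqref{eq:defd}, the comparison inequalities of Proposition~\ref{prop:comp-SaS0}, and the integral estimate of Proposition~\ref{lem:HK-int}. Conditions (3) and (4) are cheap: for (3) we use~\eqref{eq:3.11a} together with Corollary~\ref{eq:lb-1st-ev} --- actually more simply, since $V\le 0$, the operator $\mathcal L_h$ is bounded below by $(-\ii h\nabla-\Ab)^2+\min V$, and a rough bound $\lambda_1(\mathcal L_h)\ge \min V$ combined with the boundedness of $\lambda(h)$ gives $\lambda_1(h)\ge -Ch$, which is certainly $\ge -ch^q$ for $q\in(1,2)$ once $h$ is small; one can also invoke known semiclassical lower bounds on the ground state of a multiple-well electro-magnetic Laplacian. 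For (4), one uses the standard IMS localization / harmonic approximation: away from the wells the operator is $\ge$ the Landau Hamiltonian shifted, which contributes $h$, while near each well the $(n+1)$st eigenvalue of the decoupled model is $\lambda(h)+ch$ by non-degeneracy of the minimum $v_0''(0)>0$; subtracting $\lambda(h)$ gives $\lambda_{n+1}(h)\ge ch = ch^p$ with $p=1$. These are where I'd expect to cite~\cite{HK22} rather than redo the work.

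The substance is in conditions (1) and (2). For (2), the diagonal estimate $\langle u_{h,i},u_{h,i}\rangle = 1+\mathcal O(\ee^{-\mathfrak S_2/h})$ follows because $\mathfrak u_h$ is $L^2$-normalized, $\chi$ is $1$ on $D(0,L)$, and the magnetic phase $\ee^{-\ii z_k\cdot\Ab(x)/h}$ has modulus one; the error comes from the region where $\chi\ne 1$, i.e. $|x-z_k|\ge L$, on which $|\mathfrak u_h(x-z_k)|^2 = \mathcal O(\ee^{-2d(|x-z_k|)/h})\le \mathcal O(\ee^{-2d(L)/h})$, and $d(L)=S_0>\hat S$ by~\eqref{eq:3.8}, so any $\mathfrak S_2<2\hat S_a$ works (in fact $2S_0$ is available). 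For the off-diagonal term $\langle u_{h,i},u_{h,j}\rangle$ with $i\ne j$, after a magnetic translation one is left with exactly the type of oscillatory overlap integral controlled in Proposition~\ref{lem:HK-int}: the phases combine (using $z\cdot\Ab(z')+z'\cdot\Ab(z)$ cancellations and $\curl\Ab=1$) to produce a term $\int \mathfrak u_h(x)\mathfrak u_h(x-(z_i-z_j))\ee^{\ii(\cdots)/h}\,dx$ which is $\mathcal O(\ee^{(-S_a+o(1))/h})$; since $S_a>\hat S_a$ (obvious part of~\eqref{eq:3.8}), any $\mathfrak S_3<\hat S_a$ is admissible. One should be slightly careful that for non-adjacent wells the distance is $>L$, which only improves the estimate.

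The main obstacle --- and the reason the stated range for $\mathfrak S_1$ is $(0,\hat S_a)$ rather than something larger --- is condition (1), the estimate $\|T_h u_{h,k}\| = \mathcal O(\ee^{-\mathfrak S_1/h})$. Here $T_h u_{h,k} = (\mathcal L_h - \lambda(h))u_{h,k}$, and since $\psi_{h,k}$ solves the single-well equation~\eqref{eq:Lh-psi} exactly, the error splits into two pieces: a commutator term $[\mathcal L_h^{\rm sw},\chi(\cdot-z_k)]\mathfrak u_h(\cdot-z_k)\ee^{\cdots}$, supported in the annulus $L\le|x-z_k|\le L+\eta$ where $\chi$ is non-constant, contributing $\mathcal O(h^{-1/2}\ee^{-d(L)/h})$ by~\eqref{eq:d-main0} (the $h^{-1/2}$ from the $H^1$-type bound, harmless since $d(L)=S_0>\hat S_a$); and the genuinely dangerous term coming from the \emph{other} wells' potentials, $\big(V - \mathfrak v_0(\cdot-z_k)\big)u_{h,k} = \sum_{j\ne k}\mathfrak v_0(x-z_j)\chi(x-z_k)\mathfrak u_h(x-z_k)\ee^{\cdots}$. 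This is supported where $\mathfrak v_0(\cdot-z_j)\ne 0$, i.e. $|x-z_j|\le a$, and there $|\mathfrak u_h(x-z_k)|\le \mathcal O(\ee^{-d(|x-z_k|)/h})$ with $|x-z_k|\ge |z_j-z_k| - a \ge L-a$, so one gets $\mathcal O(h^{-1/2}\ee^{-d(L-a)/h}) = \mathcal O(h^{-1/2}\ee^{-\hat S_a/h})$ by the definition of $\hat S_a$ in~\eqref{eq:defSs}. Hence $\mathfrak S_1$ can be taken to be anything strictly less than $\hat S_a$, absorbing the polynomial prefactor. The last thing to check is the compatibility required by Assumption~\ref{ass1}, namely $p<q$, which holds since $p\le 1<q$; and the final sanity check $\mathfrak S_1<\hat S_a < 2\hat S_a$ guarantees $2\min_j\mathfrak S_j > \mathfrak S_1$ is \emph{not} what's needed here (that's for Assumption~\ref{ass-error}), only that the constants in~\eqref{eq:condhatS} are mutually consistent, which they visibly are.
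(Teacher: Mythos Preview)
Your treatment of conditions (1) and (2) matches the paper's proof closely and is correct: the commutator-plus-remote-wells decomposition of $T_h u_{h,k}$ (giving the $\hat S_a$ bound from the support condition $|x-z_j|\le a$, $|x-z_k|\ge L-a$) and the appeal to Proposition~\ref{lem:HK-int} for the off-diagonal overlaps are exactly what the paper does in its Steps~1--2.

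The gap is in condition (3). First, invoking Corollary~\ref{eq:lb-1st-ev} is circular: that corollary \emph{assumes} Assumption~\ref{ass1}. Second, your fallback ``rough bound'' does not deliver what you claim. From $\lambda_1(\mathcal L_h)\ge \min V$ and $\lambda(h)\le h$ you get only $\lambda_1(h)\ge \min V-h$, a negative constant, not $-Ch$; and even a bound $\lambda_1(h)\ge -Ch$ would \emph{not} yield $\lambda_1(h)\ge -ch^q$ for any $q>1$, since $h^q\ll h$ as $h\to 0$ makes $-ch^q>-Ch$. The paper instead runs an IMS localization on a ground state $f_h$ of $\mathcal L_h$ with a partition $\sum_{k=0}^n\zeta_k^2=1$ adapted to the wells: on $\supp\zeta_k$ ($k\ge 1$) only $\mathfrak v_0(\cdot-z_k)$ survives, so $\langle\mathcal L_h(\zeta_kf_h),\zeta_kf_h\rangle\ge\lambda(h)\|\zeta_kf_h\|^2$; on $\supp\zeta_0$ the potential vanishes and the Landau bound gives $\ge h\|\zeta_0f_h\|^2\ge\lambda(h)\|\zeta_0f_h\|^2$. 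The localization error $-h^2\sum\|\,|\nabla\zeta_k|f_h\|^2$ is $\mathcal O(h^2)$, whence $\lambda_1(h)\ge -M_0 n\,h^2$, which is $\ge -ch^q$ for any $q<2$. This is the missing ingredient; the ``known semiclassical lower bounds'' you allude to amount to precisely this IMS step. For condition (4), your sketch (decoupling plus the order-$h$ single-well spectral gap from $v_0''(0)>0$) is the right idea; the paper's written proof is in fact silent on (4), so there you are more explicit than the paper.
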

 
\begin{proof} \textbf{Step~1.} We have by~\eqref{eq:Lh-psi},
\begin{equation}\label{eq:def-r-HK}
  r_{h,k}
  =
  \Bigl(h^2(\Delta\chi_k) - 2\ii(\nabla\chi_k)\cdot(-\ii h\nabla-\Ab)+\sum_{i\not=k}\mathfrak v_0(x-z_i) \Bigr) \psi_{h,k}
\end{equation}
where \(\chi_k(x)=\chi(x-z_k)\). 

Since \(r_{h,k}\) is supported in \(D(z_k,L+\eta)\setminus D(z_k,L-a)\), we get by using~\eqref{eq:d-main0} and the decay of the ground state \(\mathfrak u_h\) that
\begin{equation}\label{eq:est-r-HK}
  \norm{r_{h,k}}
  =
  \mathcal O(h^{-1/2}\ee^{-\hat S_a/h}).
\end{equation}

\textbf{Step 2.} We have
\[
  \norm{u_{h,k}}^2
  =
  \norm{\mathfrak u_h}^2-\int_{\R^2}(1-\chi^2)|\mathfrak u_{h}|^2\dd x.
\]
Since \(1-\chi^2\) is supported in \(\R^2\setminus D(0,L)\), we get by the decay of \(\mathfrak u_h\) that
\[
  \norm{u_{h,k}}^2
  =
  1 + \mathcal O(h^{-1}\ee^{-2S_0/h} )
\]
where \(S_0\) is introduced in (\ref{eq:defSs}a).

Let us now consider \(i\not=j\). We first inspect the case where  \(|z_i-z_j|=L\),  which  occurs only if \(j=i\pm 1\).  By a change of variable, we check that (thanks to the invariance by rotation)
\[ 
  \langle u_{h,i},u_{h,j}\rangle
  = 
  \begin{cases}
    \langle u_{h,1}, u_{h,2}\rangle 
    &\text{if } j = i + 1\\
    \overline{\langle u_{h,1}, u_{h,2}\rangle}
    &\text{if } j = i - 1.
  \end{cases}
\]
We perform the following decomposition 
\[ 
  \langle u_{h,1}, u_{h,2}\rangle
  =
  \mathcal E_1+\mathcal E_2
\]
where, 
\[
  \begin{aligned}
    \mathcal E_1
    &\coloneqq 
    \int_{D(z_1,L)}u_{h,1}(x)\overline{u_{h,2}(x)}\dd x,\\
    \mathcal E_2
    &\coloneqq 
    \int_{\substack{L<|x-z_1|<L+\eta\\ L<|x-z_2|<L+\eta}}u_{h,1}(x)\overline{u_{h,2}(x)}\dd x= \mathcal O(h^{-1/2}\ee^{- S_0/h}).
  \end{aligned}
\]
In \(D(z_1,L)\), the cut-off functions in the definitions of \(u_{h,1}\) and \(u_{h,2}\) are equal to \(1\). By a change of variable, we get
\[
  |\mathcal E_1|
  =
  \biggl\lvert\int_{D(0,L)} \mathfrak u_h(x)\mathfrak u_h(x_1+L,x_2)\ee^{\frac{\ii Lx_2}{2h}}\dd x\biggr\rvert,
\]
and by Lemma~\ref{lem:HK-int}, we get
\[
  \mathcal E_1=\mathcal O(h^{-1/2}\ee^{ -( S_a+o(1)) /h} )+\mathcal O(h^{-1/2}\ee^{-S_0 /h} )=\mathcal O(\ee^{-\hat S_a/h}),
\]
where in the last step we used the  inequalities \(\hat S_a<S_a<S_0\) from Proposition~\ref{prop:comp-SaS0}.  

If \(|z_i-z_j|\not=L\), then  \( \langle u_{h,i}\,,\,u_{h,j}\rangle=\mathcal O(h^{-1/2}\ee^{ -( S_a+o(1)) /h} )+\mathcal O(h^{-1/2}\ee^{-S_0 /h} )\) by a similar argument. 
 
\textbf{Step 3.} Let us now estimate \(\lambda_1(h)\) from below. Consider a partition of unity on $\R^2$, $\sum_{k=0}^{n}\zeta_k^2=1$, where
\[
  \text{for }
  1\leq k\leq n,\quad 
  \zeta_k=1 \text{ on } D\left(z_k,\frac{L}2\right),\quad  
  \supp\zeta_k\subset D(z_k,L-a)\,, 
\]
and
\[
  \supp \zeta_0\subset \Omega_0\coloneqq \R^2\setminus \bigcup_{k=1}^nD\left(z_k,\frac{L}2\right).
\] 
Pick a normalized ground state $f_h$   of  $\mathcal L_h$. Then we have,
\[ 
  \mathfrak v_0(x-z_i)\zeta_k(x)=0 \text{ for } i\neq k\,,~1\leq k\leq n\,,
\]
and
\[
  \mathfrak v_0(x-z_i)\zeta_0(x)=0 \text{ for } 1\leq i\leq n\,.
\]
We have the decomposition formula
\[
  \begin{multlined}
    \lambda_1(h)
    =
    \langle T_hf_h,f_h\rangle
    = 
    \sum_{k=1}^n \langle \mathcal L_h^{\rm sw} (\zeta_kf_h) ,\zeta_kf_h\rangle \\
    +\langle(-\ii h\nabla-\Ab)^2(\zeta_0f_h),\zeta_0f_h\rangle
    -h^2\sum_{k=0}^{n}\||\nabla\zeta_k|f_h \|^2
    -\lambda(h).
  \end{multlined}
\]
By the min-max principle we have
\[
  \sum_{k=1}^n \langle \mathcal L_h^{\rm sw} (\zeta_kf_h),\zeta_kf_h\rangle
  \geq 
  \lambda(h)\|\zeta_k f_h\|^2
\]
and (with~\eqref{eq:3.11a} in mind)
\[
  \langle(-\ii\nabla-\Ab)^2(\zeta_0f_h),\zeta_0f_h\rangle
  \geq 
  h\|\zeta_0 f_h\|^2
  \geq
  \lambda(h)\|\zeta_0f_h\|^2.
\]
Hence there exists $h_0>0$ such that for $h\in (0,h_0]$
\[
  \lambda_1(h)
  \geq 
  -M_0 n h^2
\]
where
\[
  M_0
  =
  \max_{0\leq k\leq n}\|\nabla\zeta_k\|_\infty^2.\qedhere
\]
\end{proof}

The next proposition allows us to verify Assumption~\ref{ass-error}. 

\begin{proposition}\label{prop:HK-sym}
With  \(J_0(h)=\langle T_hu_{h,1},u_{h,1}\rangle\) and \(J_1(h)=\langle T_hu_{h,1},u_{h,2}\rangle\)  we  have
\[
  J_0(h)=\mathcal O(\ee^{-2\hat S_a(v_0,L)/h})
\]
and
\[
  J_1(h)
  =
  \ee^{\ii\phi_n/2h} \mathfrak c_h(v_0,L) + \mathcal O(\ee^{-2\hat S_a(v_0,L)/h} ),
\]
where \(\mathfrak c_h( v_0,L)\) is the hopping coefficient introduced in~\eqref{eq:hopping},  \(\hat S_a(v_0,L)\) is introduced in  (\ref{eq:defSs}c) and
\[
  \phi_n=-\frac{L^2\sin(2\pi/n)}{2-2\cos(2\pi/n)}.
\]
Moreover, if \(2\hat S_a(v_0,L)>S(v_0,L)\) then we have
\[
  h\ln |J_1(h)|\underset{h\searrow0}{\sim}- S( v_0,L),
\]
where \(S(v_0,L)\) is introduced in~\eqref{eq:def-S(v0)**}.
\end{proposition}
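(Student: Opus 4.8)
The plan is to start from the exact identity for $T_h u_{h,1}$ already derived in the proof of Proposition~\ref{prop:HK-ass1} (see~\eqref{eq:def-r-HK}): since $(-\ii h\nabla-\Ab)^2\psi_{h,1}=(\lambda(h)-\mathfrak v_0(\cdot-z_1))\psi_{h,1}$ and $u_{h,1}=\chi_1\psi_{h,1}$ with $\chi_1=\chi(\cdot-z_1)$,
\[
  T_h u_{h,1}
  =
  \bigl[(-\ii h\nabla-\Ab)^2,\chi_1\bigr]\psi_{h,1}
  +
  \sum_{i\neq 1}\mathfrak v_0(\cdot-z_i)\,u_{h,1}.
\]
For $J_0(h)$ I would pair this with $u_{h,1}$: the commutator term lives on the annulus $\{L\le|x-z_1|\le L+\eta\}$, where~\eqref{eq:d-main0} gives both factors the decay $\ee^{-d(L)/h}=\ee^{-S_0/h}$, contributing $\mathcal O(\ee^{-2S_0/h})$; the potential term lives on $\bigcup_{i\neq1}D(z_i,a)$, where $|x-z_1|\ge L-a$, so $|u_{h,1}|^2$ carries the factor $\ee^{-2d(L-a)/h}=\ee^{-2\hat S_a/h}$. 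Since $\hat S_a<S_0$ by Proposition~\ref{prop:comp-SaS0}, this gives $J_0(h)=\mathcal O(\ee^{-2\hat S_a/h})$.

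For $J_1(h)=\langle T_h u_{h,1},u_{h,2}\rangle$ the plan is to isolate the term $i=2$ as the main term. On $D(z_2,a)=\supp\mathfrak v_0(\cdot-z_2)$ the cut-off in $u_{h,2}$ is $\equiv 1$, and writing $\psi_{h,j}(x)=\mathfrak u_h(x-z_j)\ee^{-\ii z_j\cdot\Ab(x)/h}$ (with $\mathfrak u_h$ real) gives $\psi_{h,1}\overline{\psi_{h,2}}=\mathfrak u_h(\cdot-z_1)\mathfrak u_h(\cdot-z_2)\ee^{\ii(z_2-z_1)\cdot\Ab(\cdot)/h}$. I would then translate by $z_2$ and use that $\Ab$ is linear, together with $z_2\cdot\Ab(z_2)=0$, to factor out the constant phase $\ee^{-\ii z_1\cdot\Ab(z_2)/h}$, which a direct computation with the explicit points~\eqref{eq-def-z} identifies with $\ee^{\ii\phi_n/2h}$; then, rotating so that $z_2-z_1=(-L,0)$ — which leaves $\mathfrak u_h,\mathfrak v_0$ invariant by radiality and transforms $\Ab$ equivariantly — and using $\mathfrak c_h(v_0,L)\in\mathbb R$~\cite{FSW}, the residual integral becomes exactly $\mathfrak c_h(v_0,L)$. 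Replacing $\chi_1$ by $1$ on $D(z_2,a)$ costs only $\mathcal O(\ee^{-S_0/h})$, the discrepancy being supported on $\{|x-z_1|>L\}$. Hence $\langle\mathfrak v_0(\cdot-z_2)u_{h,1},u_{h,2}\rangle=\ee^{\ii\phi_n/2h}\mathfrak c_h(v_0,L)+\mathcal O(\ee^{-S_0/h})$.

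Next I would bound the remaining contributions to $J_1(h)$. The commutator term paired with $u_{h,2}$ is supported on $\{|x-z_1|\ge L\}$, where $\psi_{h,1}$ and $(-\ii h\nabla-\Ab)\psi_{h,1}$ decay like $\ee^{-S_0/h}$ by the elliptic bound in~\eqref{eq:d-main0}, so it is $\mathcal O(\ee^{-S_0/h})$; for $i\notin\{1,2\}$ the term $\langle\mathfrak v_0(\cdot-z_i)u_{h,1},u_{h,2}\rangle$ is supported on $D(z_i,a)$, where $|x-z_1|\ge L-a$ and $|x-z_2|\ge L-a$ (as $|z_i-z_1|,|z_i-z_2|\ge L$), so the integrand carries the two factors $\ee^{-d(L-a)/h}$ and this term is $\mathcal O(\ee^{-2\hat S_a/h})$. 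Collecting (the $\mathcal O(\ee^{-S_0/h})$ pieces being harmless since $S_0>\hat S$ by Proposition~\ref{prop:comp-SaS0}), this yields $J_1(h)=\ee^{\ii\phi_n/2h}\mathfrak c_h(v_0,L)+\mathcal O(\ee^{-2\hat S_a/h})$, up to polynomial prefactors. For the last assertion: if $2\hat S_a>S(v_0,L)$ then — since also $S_0>\hat S>S(v_0,L)$ by Propositions~\ref{prop:comp-SaS0} and~\ref{prop:HK22-hopping} — every error rate strictly exceeds $S(v_0,L)$, while $h\ln|\mathfrak c_h(v_0,L)|\to -S(v_0,L)$; as $\mathfrak c_h(v_0,L)\neq0$ and $|\ee^{\ii\phi_n/2h}|=1$, the error is $o(|\mathfrak c_h(v_0,L)|)$, so $|J_1(h)|=|\mathfrak c_h(v_0,L)|(1+o(1))$ and $h\ln|J_1(h)|\to -S(v_0,L)$. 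The one point demanding care is the magnetic-translation algebra — extracting exactly $\ee^{\ii\phi_n/2h}$ and recognizing the residual integral as $\mathfrak c_h(v_0,L)$ — together with matching each remainder to the right exponential rate; the genuinely hard analytic input, namely the asymptotics of $\mathfrak c_h(v_0,L)$ itself, is already available from~\cite{HK22} through Proposition~\ref{prop:HK22-hopping}.
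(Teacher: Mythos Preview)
Your proof is correct and follows essentially the same strategy as the paper's: decompose $T_h u_{h,1}$ via~\eqref{eq:def-r-HK}, extract the single potential term attached to the neighboring well as the main contribution, and reduce it to $\mathfrak c_h(v_0,L)$ by a magnetic translation followed by a rotation, while bounding the commutator and the remaining potential terms by the decay estimate~\eqref{eq:d-main0}. The only cosmetic difference is that the paper first invokes the rotational symmetry to rewrite $J_1(h)=\overline{\langle T_h u_{h,1},u_{h,n}\rangle}$, thereby placing the relevant well $z_n$ on the $x_1$-axis and slightly streamlining the phase computation; your direct computation with $(z_1,z_2)$ is equivalent.
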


\begin{proof}
From~\eqref{eq:def-r-HK}, we have
\[
  \begin{aligned}
    J_0(h)
    =
    \langle (\mathcal L_h-\lambda(h)) u_{h,1},u_{h,1}\rangle
    & = \int_{L-a\leq  |x-z_1|\leq L+\eta} r_{h,1}(x)\overline{u_{h,1}(x) }\dd x \\
    & = \mathcal O(h^{-1/2}\ee^{-2\hat S_a/h}).
  \end{aligned}
\]
We now move to estimate \(J_1(h)\).  First let us recall that the symmetry relations in Assumption~\ref{ass:symbis} (3) imply that \(u_{h,1}=M(g)u_{h,n}\) and\footnote{This formulation will be helpful since \(z_n\) lies on the \(x\)-axis.}
\[
  J_1(h)
  =
  \langle T_hu_{h,1},u_{h,2}\rangle
  =
  \langle T_hu_{h,n},u_{h,1}\rangle
  =
  \overline{\langle T_hu_{h,1},u_{h,n}\rangle}.
\]
Similarly to the previous  estimate of \(J_0(h)\),   by~\eqref{eq:Lh-psi} and~\eqref{eq:def-r-HK} we have
\[
  J_1(h)
  =
  J^{\rm app}_{1}(h)+\mathcal O(\ee^{-2\hat S_a/h} ),
\]
where
\[
  J^{\rm app}_{1}(h)
  =
  \int_{D(z_n,a) } \mathfrak v_0(x-z_n)\mathfrak u_h(x-z_1)\mathfrak u_h(x-z_n)\ee^{-\ii (z_n-z_1)\cdot\Ab(x)/h}\dd x.
\]
Notice that \((z_n-z_1)\cdot\Ab(x)=(z_n-z_1)\cdot\Ab(x-z_n+z_1)\). By a translation, we get
\[
  J^{\rm app}_{1}(h)
  =
  \ee^{-\ii(z_n-z_1)\cdot\Ab(z_1)/h}\int_{D(0,a) } \mathfrak v_0(y)\mathfrak u_h(y+z_n-z_1)\mathfrak u_h(y)\ee^{-\ii (z_n-z_1)\cdot\Ab(y)/h}\dd y.
\] 
With \(\ell_n=\frac{L}{\sqrt{2-2\cos(2\pi/n)}}\), we have \(z_n=(\ell_n,0)\) and \(z_1=\big(\ell_n\cos(2\pi/n),\ell_n\sin(2\pi/n)\big)\). Hence
\[
  (z_n-z_1)\cdot\Ab(z_1)=z_n\cdot\Ab(z_1)=\frac{\phi_n}{2}.
\]
Observing that \(|z_n-z_1|=L\), we write \(z_n-z_1=L\ee^{\ii\beta_n}\) where \(\beta_n\in(0,\pi]\). We denote by \(R_{\beta_n}\) the rotation by \(\beta_n\) around the origin. Noticing that
\[
  u\cdot\Ab(R_{\beta_n}v)=(R_{\beta_n}^{-1}u)\cdot \Ab(u)\quad (u,v\in\R^2),
\]
we get that 
\[
  (z_n-z_1)\cdot\Ab(R_{\beta_n} x)
  =
  \begin{pmatrix}
    L\\
    0
  \end{pmatrix}
  \cdot
  \begin{pmatrix}
    -x_2/2\\
    x_1/2
  \end{pmatrix}
  =
  -\frac{Lx_2}{2}.
\]
The change of variable \(y\mapsto x=R_{\beta_n}^{-1}y\) yields
\[
  J^{\rm app}_{1}(h)
  =
  \ee^{\ii\phi_n/2h}\int_{D(0,a) } \mathfrak v_0(x)\mathfrak u_h(x_1+L,x_2)\mathfrak u_h(x)\ee^{\ii Lx_2/h}\dd y
\]
where we used that the functions \(\mathfrak v_0,\mathfrak u_h\) are radial and that
\[
  |R_{\beta_n}x+z_n-z_1|=|x+R_{\beta_n}^{-1}(z_n-z_1)|=|(x_1+L,x_2)|.
\]
Now we have that  \(\ee^{-\ii\phi_n/2h}J^{\rm app}_{1}(h)=\mathfrak c_h(v_0,L)\) and by Proposition~\ref{prop:HK22-hopping}
\[
  h\ln|J_1^{\rm app}(h)|
  \underset{h\searrow0}{\sim}- S(  v_0,L),
\]
with \(0<S(v_0,L)<\hat S\). The same asymptotics hold for \(|J_1(h)|\) if we assume that \(2\hat S_a>S(v_0,L)\,\).
\end{proof}

From now on,  we work under the following assumption on \((v_0,L)\).

\begin{assumption}\label{ass:v0L}
With \(a=a(v_0)\)   introduced  introduced in~\eqref{eq:def-a} and \(L>2a\),  the function \(v_0\) satisfies \(2\hat S_a(v_0,L)> S(v_0,L)\), where \(\hat S_a(v_0,L)\) and \(S(v_0,L)\) are introduced in~\eqref{eq:defSs} and~\eqref{eq:def-S(v0)**} respectively.
\end{assumption}

\begin{remark}\label{rem:ass-v0L}
By Proposition~\ref{prop:comp-SaS0}, Assumption~\ref{ass:v0L} holds if \(L>4a(v_0)\).
\end{remark}

We fix the choice of \(\mathfrak S_1,\mathfrak S_2,\mathfrak S_3\) as in~\eqref{eq:condhatS} but with the additional condition that
\[
  S(v_0,L)<2\min_{1\leq j\leq 3}\mathfrak S_j.
\]
This is possible under Assumption~\ref{ass:v0L} since  \(S(v_0,L) < 2 \hat S_a(v_0,L)\).  Therefore,~\eqref{eq:newcond} holds with \(S=S(v_0,L)\) and Proposition~\ref{prop:HK-sym} ensures that the other conditions in Assumption~\ref{ass-error} hold too\footnote{This is explicitly done for \(n=2,3\) and can be easily justified for \(n\geq 4\).}.

\subsection{The case $n=2$: Double wells}
We choose here \(V\) to be the \emph{double well}  potential defined by~\eqref{eq:V} for \(n=2\). We therefore have two wells
\begin{equation}\label{eq:z-lr}
  z_1
  =
  \Big(-\frac{L}2,0\Big),\quad z_2=\Big(\frac{L}2,0\Big),\quad L>2a>0.
\end{equation}
A straightforward application of~\eqref{eq:tun-error-a},  Propositions~\ref{prop:HK-ass1}  and \ref{prop:HK-sym} (and Remark~\ref{rem:optimal-error}) yields the following.
\begin{theorem}\label{thm:HK2wells}
Assuming  the conditions in~\eqref{eq:v0} and in Assumption~\ref{ass:v0L} are fulfilled,  then  the following asymptotics holds,
\begin{equation}\label{eq:HK2wells}
  h \ln\big(\lambda_2(h)-\lambda_1(h)\big) \underset{h\searrow0}{\sim} -  S(v_0,L),
\end{equation}
where \(S(v_0,L)\) is the constant introduced in~\eqref{eq:def-S(v0)**}.
\end{theorem}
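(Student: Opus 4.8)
The plan is to deduce the statement directly from the abstract $n=2$ reduction of Section~\ref{sec.IM}, feeding in the verifications already established in Propositions~\ref{prop:HK-ass1}, \ref{prop:HK-sym} and~\ref{prop:HK22-hopping}; essentially no new analysis is required, only the assembling of these ingredients.

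First I would fix the geometric setup: take $g=g_2$ to be the rotation by $\pi$, i.e. $x\mapsto -x$, so that with the wells $z_1=(-L/2,0)$, $z_2=(L/2,0)$ of~\eqref{eq:z-lr} one has $z_2=gz_1$. The operator $T_h=\mathcal L_h-\lambda(h)$ of~\eqref{eq:H-h} commutes with $M(g)$ (the Landau Hamiltonian is rotation-invariant and $V$ is $g$-invariant by construction), and the quasimodes $u_{h,1},u_{h,2}$ of~\eqref{eq:def-un-HKa} satisfy $u_{h,2}=M(g)u_{h,1}$; hence Assumption~\ref{ass:symbis} holds with $n=2$. Proposition~\ref{prop:HK-ass1} then gives Assumption~\ref{ass1} for these quasimodes, valid for any constants $\mathfrak S_1,\mathfrak S_2,\mathfrak S_3,p,q$ obeying~\eqref{eq:condhatS}.

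Next I would install Assumption~\ref{ass-error} with $\mathfrak S=S(v_0,L)$. Under Assumption~\ref{ass:v0L} we have $S(v_0,L)<2\hat S_a(v_0,L)$, so the ranges in~\eqref{eq:condhatS} can be narrowed so that in addition $S(v_0,L)<2\min_{1\le j\le 3}\mathfrak S_j$, which is precisely~\eqref{eq:newcond}. Proposition~\ref{prop:HK-sym} supplies the rest: since $\phi_2=0$ here, $J_0(h)=\mathcal O(\ee^{-2\hat S_a/h})$ and $J_1(h)=\mathfrak c_h(v_0,L)+\mathcal O(\ee^{-2\hat S_a/h})$ with $h\ln|J_1(h)|\underset{h\searrow0}{\sim}-S(v_0,L)$, so~\eqref{eq:assnew1} holds. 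Now Proposition~\ref{prop:control-Lambda-h} yields $\|\mathsf R_h\|=o(|J_1(h)|)$, and the $n=2$ spectral formula~\eqref{eq:formn=2} combined with $I_1(h)=J_1(h)+o(|J_1(h)|)$ produces~\eqref{eq:tun-error-a}, namely $\lambda_2(h)-\lambda_1(h)=2|J_1(h)|+o(|J_1(h)|)$. Taking logarithms, $h\ln\big(\lambda_2(h)-\lambda_1(h)\big)=h\ln|J_1(h)|+o(1)\underset{h\searrow0}{\sim}-S(v_0,L)$, which is~\eqref{eq:HK2wells}.

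I do not expect any serious obstacle in this assembly: the genuine difficulty — the exponential-scale control of the hopping coefficient $\mathfrak c_h(v_0,L)$ and of the overlap and remainder terms, together with the identification of the constant $S(v_0,L)$ of~\eqref{eq:def-S(v0)**} and the strict inequality $S(v_0,L)<\hat S(v_0,L)$ — is imported wholesale from~\cite{HK22} through Propositions~\ref{prop:HK22-hopping} and~\ref{prop:HK-sym}. The only point needing a moment's care is checking that~\eqref{eq:condhatS} is compatible with~\eqref{eq:newcond}, and this is exactly what Assumption~\ref{ass:v0L} buys us; recall (Remark~\ref{rem:ass-v0L}) that this assumption follows from the clean condition $L>4a(v_0)$, so in particular Theorem~\ref{thm:HK2wells} holds under that hypothesis, which is the improvement over~\cite{HK22} announced in the introduction.
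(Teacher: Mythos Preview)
Your proposal is correct and follows essentially the same route as the paper: the paper also derives Theorem~\ref{thm:HK2wells} as an immediate consequence of~\eqref{eq:tun-error-a} together with Propositions~\ref{prop:HK-ass1} and~\ref{prop:HK-sym}, after fixing $\mathfrak S_1,\mathfrak S_2,\mathfrak S_3$ in~\eqref{eq:condhatS} with the extra constraint $S(v_0,L)<2\min_j\mathfrak S_j$ (made possible by Assumption~\ref{ass:v0L}). Your observation that $\phi_2=0$ and hence $J_1(h)=\mathfrak c_h(v_0,L)+\mathcal O(\ee^{-2\hat S_a/h})$ is the only detail you make more explicit than the paper does.
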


\subsubsection{Discussion}

Let us introduce the following classes\footnote{\(\mathscr A^{\rm FSW}\) is the admissible class introduced by Fefferman-Shapiro-Weinstein and HKS refers to the admissible classes introduced in this paper.}  of admissible \((v_0,L)\),  where \(v_0\) satisfies the conditions in \eqref{eq:v0},  and
\[
  \begin{aligned}
    \mathscr A
    &=\{(v_0,L)~:~L>2a(v_0) \mbox{ and \eqref{eq:HK2wells} holds}\},\\
    \mathscr A^{\rm FSW}
    &=\{(v_0,L)~:~L>4(a(v_0)+\sqrt{|v_0^{\min}|})\},\\
    \overline{\mathscr A}^{\rm HKS}
    &=\{(v_0,L)~:~v_0\mbox{ satisfies Assumption \ref{ass:v0L}}\}\\
    \underline{\mathscr A}^{\rm HKS}
    &=\{(v_0,L)~:~L\geq 4a(v_0)\}.
  \end{aligned}
\]
By Proposition~\ref{prop:comp-SaS0},  we have
\[
  \underline{\mathscr A}^{\rm HKS}\subset \overline{\mathscr A}^{\rm HKS}
\]
and by Theorem~\ref{thm:HK2wells}
\[
  \overline{\mathscr A}^{\rm HKS}\subset \mathscr A.
\]
The earlier results in~\cite{FSW, HK22} yield that
\[
  \mathscr A^{\rm FSW}\subset \mathscr A.
\]
However,  our results are much stronger since 
\(\mathscr A^{\rm FSW}\) is a proper subset of \(\underline{\mathscr A}^{\rm HKS}\).

\subsubsection{Proof of Theorem~\ref{thm:HKS1}}

In light of~\eqref{eq:red-b=1}, it suffices to apply Theorem~\ref{thm:HK2wells} with the effective semi-classical parameter \(\hbar =b^{-1}h\) and the effective potential defined by \(b^{-2}v_0\).  We then obtain \(\mathscr E_{b,L}(v_0)=bS(b^{-2}v_0,L)\).  

\subsection{The case \(n=3\): Three wells and flux effects}

Let us assume that \(n=3\) so that \(V\) defined by~\eqref{eq:V} is a potential with three wells 
\[
  z_1
  =
  \frac{L}{2\sqrt{3}}(-1,\sqrt{3}), \quad z_2=\frac{L}{2\sqrt{3}}(-1,-\sqrt{3}),\quad z_3= \frac{L}{\sqrt{3}}(1,0),
\]
which are located on the vertices of an equilateral triangle  with side length \(L\) and let \(D\subset\R^2\) be its interior.  The area of \(D\) is then
\(\frac{\sqrt{3}}{4}L^2\).  The flux of the magnetic field in  \(D\) is
\begin{equation}\label{eq:def-flux-n=3}
  \Phi
  \coloneqq
  \frac1{2\pi}\int_D\curl\Ab\dd x
  =
  \frac{\sqrt{3}L^2}{8\pi}.
\end{equation}
By Proposition~\ref{prop:HK-sym}, we have
\[
  J_1(h)
  \underset{h\searrow0}{\sim} 
  |J_1(h)|\ee^{-2\pi \ii \Phi/3h}, \quad 
  |J_1(h)|
  \underset{h\searrow0}{=}
  \exp\left(-\frac{ S(\mathfrak v_0,L)+o(1)}{h} \right),
\]
and
\[
  J_0(h)
  =
  \mathcal O(\ee^{-2\hat S_a/h}) \mbox{ with } S(\mathfrak v_0,L)<2\hat S_a\,. 
\]

Recall that  by~\eqref{eq:Ik=Jk},
\begin{equation}\label{eq:I1(h)=J1(h)*}
  I_1(h)
  \underset{h\searrow0}{\sim}
  J_1(h).
\end{equation}
If for a given constant \(c_0>0\) we introduce the set
\begin{equation}\label{eq:hyp-sep}
  \mathcal N(c_0)
  =
  \left\{h\in(0,1],~\dist\left(\frac{\Phi}{3h},\Z\right)\geq c_0\right\},
 \end{equation}
then it results from~\eqref{eq:I1(h)=J1(h)*} that (since \(\theta(h)\in[0,2\pi)\) in the definition of \(I_1(h)\))
\begin{equation}\label{eq:app-theta}
  \theta(h)
  \underset{\substack  {h\searrow0\\
  h\in\mathcal N(c_0)}}{=}
  2\pi\left(\frac{\Phi}{3h}-\left\lfloor\frac{\Phi}{3h}\right\rfloor \right)+o(1). 
\end{equation}
Now  by  paragraph~\ref{sec:3-wells}  we get crossing of eigenvalues  quantified via the following functions
\begin{equation}\label{eq:def-a-b}
  \begin{aligned}
    \mathsf a(x)
    &=\cos\left(\frac{x}{3}+\frac{2\pi}{3}\right)-\cos\left(\frac{x}{3}\right),\\
    \mathsf b(x)
    &=\cos\left(\frac{x}{3}+\frac{4\pi}{3}\right)-\cos\left(\frac{x}{3}+\frac{2\pi}{3}\right).
  \end{aligned}
\end{equation}

\begin{theorem}\label{thm:HK-3wells}
Assume that  the conditions in~\eqref{eq:v0} are fulfilled and that \(V\) is defined by~\eqref{eq:V} with \(n=3\).  If Assumption~\ref{ass:v0L} holds,  then  there is a relabeling  \(\mu_1(h),\mu_2(h),\mu_3(h)\) of the eigenvalues \(\lambda_1(h),\lambda_2(h),\lambda_3(h)\) of the electro-magnetic Schr\"odinger operator \(\mathcal L_h\)  such that the asymptotics 
\begin{equation}\label{eq:HK3wells}
  \begin{aligned}
    \mu_2(h)-\mu_1(h)
    &\underset{h\searrow0}{=} \bigl(\mathsf a(\Phi/h)+o(1)\bigr) \exp\left(\frac{- S( v_0,L)+o(1)}{h}\right)\\
    \mu_3(h)-\mu_2(h) 
    &\underset{h\searrow0}{=}\bigl(\mathsf b(\Phi/h) +o(1)\bigr) \exp\left(\frac{- S( v_0,L)+o(1)}{h}\right)
  \end{aligned}
\end{equation}
hold  for all  \(L>2a(v_0)\).
 
Moreover,  there exists a sequence \(\bigl(h_1(k),h_2(k),h_3(k)\bigr)_{k\geq k_0}\) which converges to \(0\) such that,  for all \(k\geq k_0\) we have
\[
  0<h_1(k+1)<h_3(k)<h_2(k)<h_1(k)<1
\]
and
\[ 
  \mu_1\big(h_1(k)\big)=\mu_3\big(h_1(k)\big),~  \mu_1\big(h_2(k)\big)=\mu_2\big(h_2(k)\big),~ \mu_2\big(h_3(k)\big)=\mu_3\big(h_3(k)\big).
\]
\end{theorem}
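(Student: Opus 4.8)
The plan is to read off everything from the abstract reduction of Section~\ref{sec.IM} specialized to $n=3$, the substantive analytic input having already been supplied upstream. The first step is to record that Assumptions~\ref{ass1}, \ref{ass:symbis} and \ref{ass-error} all hold in the present situation: this is exactly Proposition~\ref{prop:HK-ass1} together with the paragraph following Assumption~\ref{ass:v0L}, which lets us take $\mathfrak S=S(v_0,L)$ and the $\mathfrak S_j$ so that~\eqref{eq:newcond} holds. Hence paragraph~\ref{sec:3-wells} applies: there is a labelling $\mu_1(h),\mu_2(h),\mu_3(h)$ of $\lambda_1(h),\lambda_2(h),\lambda_3(h)$ with
\[
  \mu_k(h)=I_0(h)+2\rho(h)\cos\!\Big(\theta(h)+(k-1)\tfrac{2\pi}3\Big),\qquad k=1,2,3,
\]
where $I_1(h)=\rho(h)\ee^{\ii\theta(h)}$ with $\rho(h)\ge0$, $\theta(h)\in[0,2\pi)$. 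By~\eqref{eq:Ik=Jk} and Proposition~\ref{prop:HK-sym} one has $I_1(h)=J_1(h)+o(|J_1(h)|)$, with $h\ln|J_1(h)|\to-S(v_0,L)$ and $J_1(h)\sim|J_1(h)|\ee^{-2\pi\ii\Phi/(3h)}$. I would also note that $I_0(h),I_1(h)$ — being coefficients of the matrix $\mathsf W_h$ of $T_h$ restricted to the spectral subspace $F_h$, which varies continuously with $h$ (Proposition~\ref{prop:ev-bounds} furnishes a gap below $\lambda_4(h)$) — depend continuously on $h$, so each $\mu_k(h)$ is continuous in $h$, including across the points where the branch $\theta(h)\in[0,2\pi)$ jumps by $2\pi$, since $\cos$ absorbs such jumps.

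Next I would extract~\eqref{eq:HK3wells}. Subtracting pairs and using elementary trigonometry and the definitions~\eqref{eq:def-a-b},
\[
  \mu_2(h)-\mu_1(h)=2\rho(h)\,\mathsf a\big(3\theta(h)\big),\qquad
  \mu_3(h)-\mu_2(h)=2\rho(h)\,\mathsf b\big(3\theta(h)\big).
\]
The amplitude satisfies $2\rho(h)=2|I_1(h)|=2|J_1(h)|(1+o(1))=\exp\big((-S(v_0,L)+o(1))/h\big)$. For the oscillating factor, $\mathsf a,\mathsf b$ are continuous and $6\pi$-periodic, hence uniformly continuous; and by~\eqref{eq:app-theta} one has $3\theta(h)\equiv\Phi/h\pmod{6\pi}$ up to an $o(1)$ error — on $\mathcal N(c_0)$ this is~\eqref{eq:app-theta} multiplied by $3$, while off $\mathcal N(c_0)$ a $2\pi$-jump of $\theta(h)$ becomes a $6\pi$-jump of $3\theta(h)$, invisible modulo $6\pi$. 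Therefore $\mathsf a(3\theta(h))=\mathsf a(\Phi/h)+o(1)$ and $\mathsf b(3\theta(h))=\mathsf b(\Phi/h)+o(1)$, which gives~\eqref{eq:HK3wells} for all $L>2a(v_0)$ under Assumption~\ref{ass:v0L}.

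For the crossing sequence, I would first observe that since $\rho(h)=|J_1(h)|(1+o(1))>0$ for small $h$, each pairwise gap — which by elementary trigonometry equals $\pm2\sqrt3\,\rho(h)\sin(\theta(h)+c_{ij})$ for an explicit constant $c_{ij}$ — vanishes precisely at the two-point sets of $\theta$-values listed after~\eqref{eq:Ik=Jk} ($\{\pi/3,4\pi/3\}$ for $\mu_1=\mu_3$; $\{2\pi/3,5\pi/3\}$ for $\mu_1=\mu_2$; $\{0,\pi\}$ for $\mu_2=\mu_3$) and changes sign there. Now fix $c_0\in(0,\tfrac{1}{12})$ and take $k_0$ large. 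For $k\ge k_0$, on the $h$-interval $I_k$ where $\lfloor\Phi/(3h)\rfloor=k$, as $h$ decreases across $I_k$ the quantity $\Phi/(3h)$ increases from $k$ to $k+1$, so by~\eqref{eq:app-theta} the reduced phase $\theta(h)$ increases (up to $o(1)$) from $0$ to $2\pi$ and in particular sweeps through $\pi/3$, then $2\pi/3$, then $\pi$, at $h$-parameters lying in $\mathcal N(c_0)$. Applying the intermediate value theorem to the continuous functions $h\mapsto\mu_1(h)-\mu_3(h)$, $h\mapsto\mu_1(h)-\mu_2(h)$, $h\mapsto\mu_2(h)-\mu_3(h)$ across these three sign changes yields $h_1(k)>h_2(k)>h_3(k)$ in $I_k$ with $\mu_1(h_1(k))=\mu_3(h_1(k))$, $\mu_1(h_2(k))=\mu_2(h_2(k))$, $\mu_2(h_3(k))=\mu_3(h_3(k))$; the ordering is forced since $\pi/3<2\pi/3<\pi$ corresponds to decreasing $h$, and $h_1(k+1)\in I_{k+1}$ lies below the left endpoint of $I_k$, so $h_1(k+1)<h_3(k)$. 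As $I_k\to\{0\}$, the sequence converges to $0$.

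The real work — the precise exponential rate and oscillating phase of the interaction coefficient $J_1(h)$ — has already been done in Propositions~\ref{prop:HK22-hopping} and~\ref{prop:HK-sym}, so the rest is essentially bookkeeping. The one point I expect to require care is the transition from~\eqref{eq:app-theta}, which is asserted only on $\mathcal N(c_0)$, to conclusions (the form of the gaps, the placement of the sign changes) valid on a full right-neighbourhood of $0$; this is handled by passing to arithmetic modulo $6\pi$ and by confining all relevant phase values to $\{\pi/3,2\pi/3,\pi,4\pi/3,5\pi/3\}$, which stay bounded away from $0$ and $2\pi$ and are therefore attained inside $\mathcal N(c_0)$ once $c_0$ is small.
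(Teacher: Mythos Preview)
Your proof is correct and follows essentially the same route as the paper: invoke the abstract \(n=3\) reduction of \S\ref{sec:3-wells}, feed in the asymptotics of \(J_1(h)\) from Proposition~\ref{prop:HK-sym} and \eqref{eq:I1(h)=J1(h)*} to obtain \eqref{eq:HK3wells}, and then use the monotone sweep of the phase \(\theta(h)\) together with continuity to locate the crossing points by the intermediate value theorem. The paper's version is terser---it fixes explicit sample points \(h_i^*(k)\) with \(\Phi/(3h_i^*(k))=k+\delta_i\) for \(\delta_i\) straddling \(1/6,1/3,1/2\) and applies continuity to \(\theta(h)\) itself rather than to the gap functions \(\mu_i-\mu_j\)---but the substance is the same.
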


\begin{proof}
To get the asymptotics in~\eqref{eq:HK3wells},  we use the computations in  Paragraph~\ref{sec:3-wells} and~\eqref{eq:I1(h)=J1(h)*}.  Then we approximate \(|J_1(h)|\) by  Proposition~\ref{prop:HK-sym}. 

Consider constants \(\delta_i\), \(i=0,1,2,3\),  such that
\[
  0<\delta_0<\frac16<\delta_1<\frac13<\delta_2<\frac12<\delta_3<1.
\]
For all \(k\geq 1\),  consider 
\[
  h_0^*(k)>h_1^*(k)>h_2^*(k)>h_3^*(k)
\]
defined by
\[
  \frac{\Phi}{3h_i^*(k)}=k+\delta_i \quad (0\leq i\leq 3).
\]
Notice that \(h_3^*(k)>h_0^*(k+1)\) and by~\eqref{eq:app-theta}, there exists \(k_0\geq 1\) such that, for all \(k\geq k_0\), we have
\[
  0<\theta\bigl(h_0^*(k)\bigr)<\frac\pi3<\theta\bigl(h_1^*(k)\bigr)<\frac{2\pi}{3}<\theta\bigl(h_2^*(k)\bigr)<\pi<\theta\bigl(h_3^*(k)\bigr)<2\pi.
\]
By continuity,  we select 
\[
  h_1(k)\in \bigl(h_0^*(k),h_1^*(k)\bigr),\quad h_2(k)\in \bigl(h_1^*(k),h_2^*(k)\bigr),\quad h_3\in \bigl(h_2^*(k),h_3^*(k)\bigr),
\]
such that
\[
  \theta\bigl(h_1(k)\bigr)=\frac\pi3,\quad \theta\bigl(h_2(k)\bigr)=\frac{2\pi}{3},\quad \theta_3\bigl(h_3(k)\bigr)=\pi
\]
and therefore we get the eigenvalue crossings as indicated in Paragraph~\ref{sec:3-wells}.
\end{proof} 

\section{Smoothed triangles and Neumann boundary condition}\label{sec.Neumann}

\subsection{Introduction}
\subsubsection{Geometric setting}\label{sec:geom}
In this section, $\Omega$ is a bounded open set of $\mathbb R^2$ with $C^\infty$ boundary \(\Gamma\).  The Hilbert space is $H=L^2(\Omega)$ and we will assume (see below for the invariance assumption)  that we are in the situation considered  in Remark \ref{remlessabstract}.  

We assume that \(\Gamma\) is a simple curve and denote its length by \(|\Gamma|=2L\). Let \(\R/2L\Z\ni s\mapsto \gamma(s)\) be the arc-length parameterization of \(\Gamma\) such that the unit tangent vector \(\tau(s)\coloneqq \dot\gamma(s)\) turns counterclockwise.  Let us denote by \(k\)  the curvature along \(\Gamma\) defined as follows
\[
  \ddot\gamma(s)=k(s)\nu(s)
\]
where \(\nu(s)\) is the unit normal to \(\Gamma\) at \(\gamma(s)\) pointing  inward  \(\Omega\).

\begin{definition}\label{def:curv-well}
We introduce the maximal curvature along \(\Gamma\) as follows
\begin{equation}\label{eq:def-curv}
k_{\max}=\max_{s\in \R/2L\Z} k(s)
\end{equation}
and call a point \(z_0\in\Gamma\) a \emph{curvature well} if \(z_0=\gamma(s_0)\) and \(k(s_0)=k_{\max}\). The point \(z_0\) is said to be a non-degenerate curvature well if furthermore \(k''(s_0)<0\).  We denote by \(\Gamma_0\) the set of curvature wells. 
\end{definition}

Consider a positive integer \(n\) and the rotation  by \(2\pi/n\) denoted by \(g_n\).   
We assume that
\begin{subequations}\label{eq:4.1}
\begin{equation}\label{eq:4.1a}
  \Omega \text{ is invariant by the rotation } g_n
\end{equation}
and  that we have \(n\) non-degenerate curvature wells
\begin{equation}\label{eq:4.1b}
  \Gamma_0=\{z_1=\gamma(s_1),\cdots,z_n=\gamma(s_n)\}
\end{equation}
with\footnote{ We identify \(\R/2L\Z\) and \([0,2L)\).}
\begin{equation}\label{eq:4.1c}
  s_j=(j-1)\frac{2L}{n}, \quad k''(s_j)<0\quad (1\leq j\leq n).
\end{equation}

The symmetry assumption yields that
\begin{equation}\label{eq:zk+1=zk}
  z_{j+1}=g_nz_j\quad(1\leq j\leq n-1).
\end{equation}
and
\begin{equation}\label{eq:gamma(s+s1)}
  \gamma\left(s+\frac{2L}{n}\right)
  =
  g_n\gamma(s),\quad k\left(s+\frac{2L}{n}\right)=k(s)\quad (s\in\R/2\pi\Z).
\end{equation}
\end{subequations}

\subsubsection{The magnetic Neumann Laplacian}\label{sec:Neumann}

We are interested in the magnetic Neumann Laplacian \(\mathscr L_h^N=(-\ii h\nabla-\Ab)^2\),  in the Hilbert space  \(L^2(\Omega)\),  where the magnetic field \(B=\curl\Ab=1\) is uniform\footnote{We can handle any magnetic field intensity \(b>0\) by a change of semi-classical parameter.} and generated by the magnetic potential \(\Ab\) introduced in~\eqref{eq:A(x)}. The operator \(\mathscr L_h^N\) acts on functions \(u\in H^2(\Omega)\) satisfying the (magnetic) Neumann condition
\[
  \nu\cdot (-\ii h\nabla-\Ab)u|_{\Gamma}=0.
\]
The case of double curvature wells corresponding to $n=2$ was treated in~\cite{BHR},  where the symmetry was generated by the reflection $\tilde g_2$ (see Remark \ref{rem:reflection}).  Here we focus on the case with \(n\geq 3\) curvature wells and where the symmetry is generated by 
a rotation.  When \(n=3\),  a typical example is the smoothed triangle (Fig.~\ref{fig1-intro}). When $\Omega$ is an equilateral triangle the heuristic discussion is given in~\cite{FH-b} but no rigorous result can be given since the authors were unable to have a sufficiently accurate control of the tunneling.  Numerically,  this has been computed  in~\cite{BDMV}
 which in particular gives the enlightening picture predicting eigenvalue crossings (Fig.~\ref{VPEquicopy}).

 \begin{figure}[htb]
  \centering
  \includegraphics[scale=0.4]{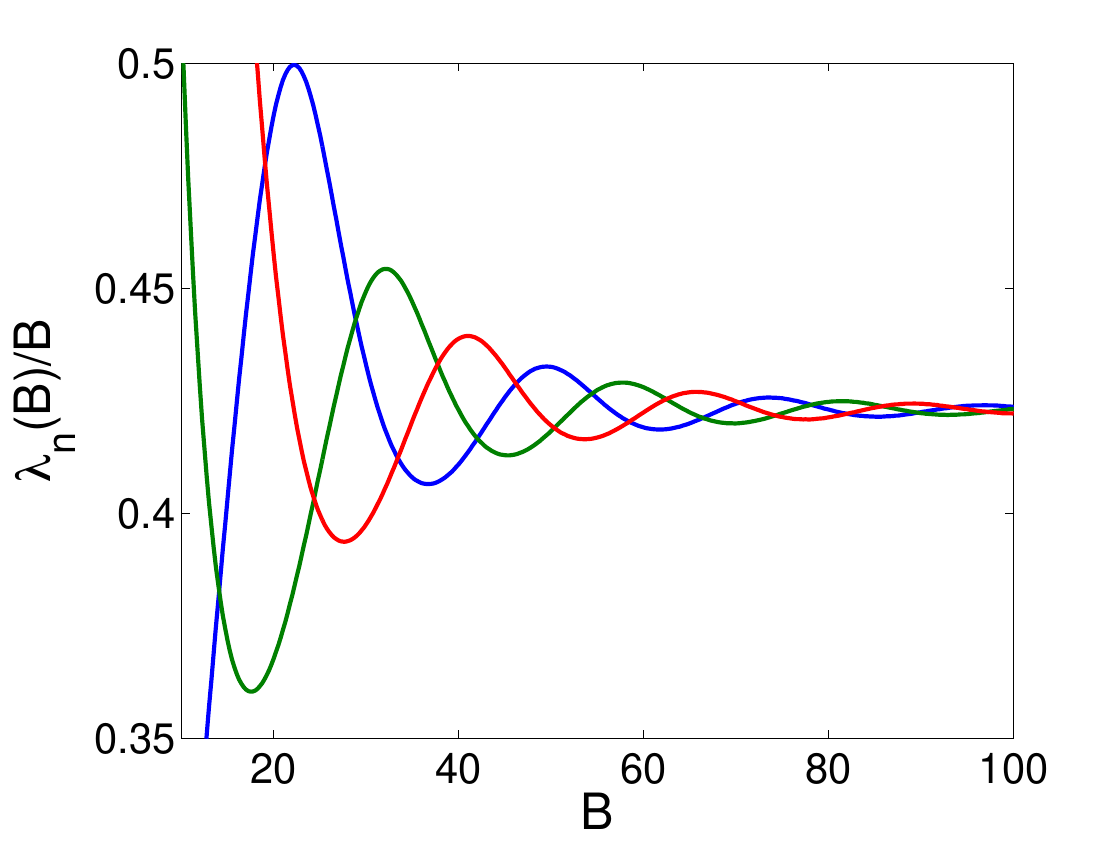} 
  \caption{Eigenvalue crossings in an equilateral triangle.}\label{VPEquicopy}
\end{figure}

The investigation of \(\mathscr L_h^N\) can be connected with Section~\ref{sec.IM}  after  shifting by a constant \(\ell(h)\) and taking the operator \(T_h\) as follows
\begin{equation}\label{eq:N-def-Th}
  T_h
  =
  \mathscr L_h^N-\ell(h).
\end{equation}
The shift constant \(\ell(h)\) will be defined by the ground state energy of  an operator with a single curvature well.  
 
Two  constants are important in our analysis.  First we meet  a magnetic flux like term  that controls the eigenvalue crossings and is defined by
\begin{subequations}\label{eq:N-constants}
\begin{equation}\label{eq:def-flux-Om}
  \Phi_0
  =
  |\Omega|-|\Gamma|\xi_0=|\Omega|-2L\xi_0,
\end{equation}
Secondly  the following constant controls the strength of the tunneling and is defined by
\begin{equation}\label{eq:def-S-N}
  \mathsf S_n=\sqrt{ \frac{2C_1}{(\mu_1^N)''(\xi_0)}}\int_0^{2L/n} \sqrt{k_{\max}-k(s)}\dd s.
\end{equation}
\end{subequations}
The definition of  \(\Phi_0\) and \(\mathsf S_n\)  involves universal constants related to the de\,Gennes model. We recall that, for \(\xi\in\R\),   \(\mu_1^N(\xi)\) denotes  the lowest eigenvalue of the Neumann realization of the Harmonic oscillator on the semi-axis  $\mathbb R_+$:
\begin{subequations}\label{eq:dG-model}
\begin{equation}\label{eq:dG-model-a}
  \mathfrak h^{N}[\xi]=-\frac{d^2}{d\tau^2}+(\xi+\tau)^2
\end{equation}
Minimizing over \(\xi\in\R\) we get the de\,Gennes constant
\begin{equation}\label{eq:dG-model-b}
  \Theta_0
  =
  \inf_{\xi\in\mathbb R}\mu^{N}_1(\xi)=\mu^{N}_1(\xi_0),\mbox{ where } \xi_0=-\sqrt{\Theta_0}\,.
\end{equation}
Denoting by  $u_0$  the positive $L^2$- normalized ground state of $\mathfrak h^N[\xi_0]$,   the constant $C_1$ appearing in~\eqref{eq:def-S-N} is defined by
\begin{equation}\label{eq:dG-model-c}
  C_1
  =
  \frac{|u_0(0)|^2}{3}\,.
\end{equation}
\end{subequations}

\subsection{Reduction to tubular domain}
In a tubular neighborhood of the boundary,  we can work with adapted coordinates \((s,t)\in \R/2L\Z\times \R_+\) that are linked to Cartesian coordinates as follows
\begin{equation}\label{eq:def-transf}
  (x_1,x_2)=\mathscr T(s,t)\coloneqq \gamma(s)-t\nu(s).
\end{equation}
There exists a geometric constant \(\varepsilon_0>0\) such that  the above transformation is invertible when \(0<t<\varepsilon_0\); the image of \(\Phi\) is the tubular neighborhood of \(\Gamma\)
\begin{equation}\label{eq:def-Gam(epsilon)}
  \Gamma(\varepsilon_0)
  =
  \{x\in\Omega~:~{\rm dist}(x,\partial\Omega)<\varepsilon_0\} 
\end{equation}
and for \(x\in\Gamma(\varepsilon_0)\),  \((s,t)=\mathscr T^{-1}(x)\) means that \(s\) is the arc-length coordinate of the projection of \(x\) on \(\Gamma\) and \(t\) is the normal distance from \(x\) to \(\Gamma\).  We will refer to \(s\) as the tangential variable and to \(t\) as the normal variable.

Let us understand the action of the operator in~\eqref{eq:def-Mgn} in these adapted coordinates.  Let \(u\) be supported in \(\Gamma(\varepsilon_0)\) and \(v=M(g)u\).  Then by~\eqref{eq:gamma(s+s1)},  \(\tilde v(s,t)=v\circ\mathscr T\) satisfies
\begin{equation}\label{eq:def-Mgn-bnd}
  \tilde v(s,t)
  =
  \tilde u\left(s-\frac{2L}{n},t \right),\quad \tilde u=u\circ\mathscr T.
\end{equation}  
The Hilbert space \(L^2(\Gamma(\varepsilon_0))\) is transformed to the weighted space  
\[
  L^2\big((\mathbb R/ 2L \mathbb Z) \times(0,\varepsilon_0);a\dd s \dd t\big),\quad a(s,t)=1-tk(s).
\]
After a gauge transformation to eliminate the normal component of $\mathbf A$, see~\cite[App.~F]{FH-b}, the action of the  operator $\mathscr L_h^N$ is  transformed into  
\[
  \begin{multlined}
  \tilde{\mathscr L}_h\coloneqq -h^2 a^{-1}\partial_t  a\partial_t + a^{-1}\left(-\ii h\partial_s+\gamma_0-t+\frac{k(s)}{2}t^2\right) \\
  \times a^{-1}\left(-\ii h\partial_s+\gamma_0-t+\frac{k(s)}{2}t^2\right)
  \end{multlined}
\]
where 
\begin{equation}\label{eq:gamma0}
\gamma_0=\frac{|\Omega|}{2L}=\frac{\pi}{L} \Phi
\end{equation}
and \[
\Phi=\frac1{2\pi}\int_\Omega B\dd x
\mbox{ is the magnetic flux  in } \Omega\,.
\]
   The   change of variables, $t=h^{1/2}\tau$ and $s=\sigma$,  transforms \(\mathbb R/ 2L \mathbb Z\times(0,\varepsilon_0)\) and the measure to \(a\dd s\dd t\) to
   \[\tilde\Gamma_h=(\mathbb R/ 2L \mathbb Z)\,\times(0,\varepsilon_0 h^{-1/2})\quad{\rm and }\quad \tilde a_h(\sigma,\tau)=1-h^{1/2}\tau k(\sigma).\]
Moreover,  it transforms the    Hilbert space
\(L^2\big((\mathbb R/ 2L \mathbb Z)\,\times(0,\varepsilon_0);a\dd s\dd t\big)\) to the Hilbert space
\(L^2\big(\tilde\Gamma_h;\tilde a_h\dd\sigma \dd\tau\big),\)
and the operator \(\tilde{\mathscr L}_h\) to \(h\tilde{\mathscr N}_h\) where
\begin{multline}\label{eq:tilde-Nh}
  \tilde{\mathscr N}_{h}
  =
  -\tilde a^{-1}_{h}\partial_\tau \tilde a_{h}\partial_\tau
  +\tilde{a}^{-1}_{h}\left(-\ii h^{1/2}\partial_\sigma+{ h^{-1/2}\gamma_0}-\tau +h^{1/2} \frac{k(\sigma)}{2}\tau^2\right)\\
  \times \tilde{a}^{-1}_{h}\left(-\ii h^{1/2}\partial_\sigma+{ h^{-1/2}\gamma_0}-\tau+h^{1/2} \frac{k(\sigma)}{2}\tau^2\right).
\end{multline}
Let us consider \(\tilde{\mathscr N}_h\), in \(L^2\big(\tilde\Gamma_h;\tilde a_h\dd\sigma d\tau\big)\), with domain\footnote{ Since \(\tilde a_h=\mathcal O(\varepsilon_0\|k\|_\infty)\), and \(\varepsilon_0\ll 1\),  the  vector space with weighted measure \(L^2\big(\tilde\Gamma_h;\tilde a_h\dd\sigma d\tau\big)\) is the same as the space  with the flat measure \(L^2\big(\tilde\Gamma_h;d\sigma d\tau\big)\) with equivalent norm.}
\[
  \begin{aligned}
    \mathsf{Dom}(\tilde{\mathscr N}_h)
    =
    \{u\in L^2\big(\tilde\Gamma_h;d\sigma d\tau\big)~:~&\partial_\tau^2u,\partial_\sigma^2u\in L^2\big(\tilde\Gamma_h;d\sigma d\tau\big),\\
    &
    \partial_\tau u|_{\tau=0}=0,~u|_{\tau=\varepsilon_0h^{-1/2}}=0\}.
  \end{aligned}
\]
The study of the eigenvalues of \(\mathscr L_h^N\),  can then be compared with those of \(\tilde{\mathscr N}_h\) (see~\cite[Prop.~2.7]{BHR}).  

\begin{proposition}\label{prop:BHR2.7}
  Let \(N\in\N\) and $\mathsf S_n$ be   the constant introduced in~\eqref{eq:def-S-N}.  There exist $K>\mathsf{S}_n$, $C,h_0>0$ such that, for all $h\in(0,h_0]$ and \(1\leq k\leq N\), we have,
  \[
    \lambda_k (\mathscr L_h^N)-Ce^{-K/h^{\frac 14}}\,\leq  h\lambda_k(\tilde{\mathscr N}_h)\leq \lambda_k(\mathscr L_h^N)+Ce^{-K/h^{\frac 14}}\,,
  \]
  where $\lambda_k(\mathscr L_h^N)$ and $\lambda_k(\tilde{\mathcal N}_h)$ are the $k$-th eigenvalues, counting multiplicity,    of the operators $\mathscr L_h^N$ and $\tilde{\mathscr N}_{h}$, respectively.
\end{proposition}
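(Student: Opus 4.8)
The plan is to follow the standard reduction for magnetic Neumann problems: the first $N$ eigenfunctions of $\mathscr L_h^N$ concentrate, in the semi-classical limit, in the tubular neighbourhood $\Gamma(\varepsilon_0)$ of $\Gamma$, so the eigenvalue problem on $\Omega$ can be compared with the one obtained by restricting to $\Gamma(\varepsilon_0)$ with a Dirichlet condition at $\dist(\cdot,\Gamma)=\varepsilon_0$. Denote by $\mathscr L_h^{N,\varepsilon_0}$ the realization of $(-\ii h\nabla-\Ab)^2$ on $\Gamma(\varepsilon_0)$ with the magnetic Neumann condition on $\Gamma$ and the Dirichlet condition on $\{x\in\Omega:\dist(x,\Gamma)=\varepsilon_0\}$. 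After the boundary coordinates~\eqref{eq:def-transf}, the normal gauge transformation and the rescaling $t=h^{1/2}\tau$ (with the attendant change of measure), $\mathscr L_h^{N,\varepsilon_0}$ is unitarily equivalent to $h\,\tilde{\mathscr N}_h$ (with Neumann at $\tau=0$ and Dirichlet at $\tau=\varepsilon_0h^{-1/2}$), so that $\lambda_k(\mathscr L_h^{N,\varepsilon_0})=h\lambda_k(\tilde{\mathscr N}_h)$ for every $k$. It then suffices to sandwich $\lambda_k(\mathscr L_h^{N,\varepsilon_0})$ between $\lambda_k(\mathscr L_h^N)$ and $\lambda_k(\mathscr L_h^N)+Ce^{-K/h^{1/4}}$ for $1\le k\le N$. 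The invariance hypotheses of~\S\ref{sec:geom} play no role here; only the smoothness of $\Gamma$ and the choice of $\varepsilon_0$ enter.

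The left inequality $\lambda_k(\mathscr L_h^N)-Ce^{-K/h^{1/4}}\le h\lambda_k(\tilde{\mathscr N}_h)$ is the easy one: extension by zero embeds the form domain of $\mathscr L_h^{N,\varepsilon_0}$ into that of $\mathscr L_h^N$, so the min--max principle gives $\lambda_k(\mathscr L_h^N)\le\lambda_k(\mathscr L_h^{N,\varepsilon_0})=h\lambda_k(\tilde{\mathscr N}_h)$, even with no error term. The substance is the reverse inequality $h\lambda_k(\tilde{\mathscr N}_h)\le\lambda_k(\mathscr L_h^N)+Ce^{-K/h^{1/4}}$. For this I would first recall that $\lambda_k(\mathscr L_h^N)\le\Theta_0h+\mathcal O(h^{3/2})$ for $k\le N$, with $\Theta_0<1$, so that these energies lie strictly below the bulk threshold; the usual Agmon argument (exponential-weight commutator estimate, as in~\cite{FH06} and as carried out in~\cite{BHR}) then yields, for a normalized eigenfunction $u_{h,j}$ of $\mathscr L_h^N$ with eigenvalue $\lambda_j(\mathscr L_h^N)$,
\[
  \int_\Omega \ee^{2\alpha\,\dist(x,\Gamma)/h^{1/2}}\Bigl(|u_{h,j}|^2+h^{-1}\abs{(-\ii h\nabla-\Ab)u_{h,j}}^2\Bigr)\dd x=\mathcal O(1)
\]
for some $h$-independent $\alpha>0$; in particular the mass of $u_{h,j}$ in $\Omega\setminus\Gamma(\varepsilon_0/2)$ is $\mathcal O(\ee^{-\alpha\varepsilon_0/h^{1/2}})$.

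Next, fix $\chi\in C_c^\infty(\Gamma(\varepsilon_0);[0,1])$ with $\chi=1$ on $\Gamma(\varepsilon_0/2)$. For $1\le j\le k\le N$ the functions $\chi u_{h,j}$ are admissible test functions for $\mathscr L_h^{N,\varepsilon_0}$; by the decay estimate they are almost orthonormal, hence span a $k$-dimensional subspace for $h$ small, and the localization identity
\[
  \IP{\mathscr L_h^{N,\varepsilon_0}(\chi u_{h,j}),\chi u_{h,j}}=\lambda_j(\mathscr L_h^N)\,\norm{\chi u_{h,j}}^2+h^2\norm{|\nabla\chi|\,u_{h,j}}^2,
\]
together with $\supp\nabla\chi\subset\Gamma(\varepsilon_0)\setminus\Gamma(\varepsilon_0/2)$ and the decay, bounds the Rayleigh quotient of $\mathscr L_h^{N,\varepsilon_0}$ on $\Span(\chi u_{h,1},\dots,\chi u_{h,k})$ by $\lambda_k(\mathscr L_h^N)+C\ee^{-\alpha\varepsilon_0/h^{1/2}}$. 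By min--max, $h\lambda_k(\tilde{\mathscr N}_h)=\lambda_k(\mathscr L_h^{N,\varepsilon_0})\le\lambda_k(\mathscr L_h^N)+C\ee^{-\alpha\varepsilon_0/h^{1/2}}$. Finally, since $h^{1/2}<h^{1/4}$ for $h<1$, one has $C\ee^{-\alpha\varepsilon_0/h^{1/2}}\le\ee^{-K/h^{1/4}}$ for $h$ small whatever the fixed value of $K$; choosing $K=\mathsf S_n+1>\mathsf S_n$ gives the stated form.

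The main obstacle is the Agmon/normal-decay estimate for the magnetic Neumann Laplacian with the correct $h$-dependence, uniformly for the first $N$ eigenfunctions: this rests on the a priori localization of $\lambda_k(\mathscr L_h^N)$ strictly below the Landau threshold by the de\,Gennes gap $(1-\Theta_0)h$, and on running the exponential-weight argument in the presence of the boundary and the magnetic potential---precisely the analysis of~\cite{FH06,BHR}, which we quote. A secondary, bookkeeping-type point is to check that the chain of transformations (restriction with Dirichlet data, coordinates~\eqref{eq:def-transf}, normal gauge transformation, rescaling and measure change) is genuinely unitary and carries $\mathscr L_h^{N,\varepsilon_0}$ onto $h\tilde{\mathscr N}_h$ with matching boundary conditions; here some care is needed with the weight $\tilde a_h(\sigma,\tau)=1-h^{1/2}\tau k(\sigma)$, but no new difficulty arises, this being exactly the computation recalled around~\eqref{eq:tilde-Nh}.
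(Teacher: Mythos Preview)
Your proposal is correct and is precisely the argument behind \cite[Prop.~2.7]{BHR}, which the paper simply cites without reproducing a proof. The only remark is that your normal Agmon decay yields an error $\mathcal O(\ee^{-c/h^{1/2}})$, strictly smaller than the stated $\ee^{-K/h^{1/4}}$; the weaker $h^{1/4}$ scale is retained in the statement only because it matches the tangential tunneling scale $\ee^{-\mathsf S_n/h^{1/4}}$ used later, exactly as you observe.
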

Recall that we are interested in applying the results in Section~\ref{sec.IM} to the operator \(T_h\) obtained by shifting the operator \(\mathscr L_h^N\) (see~\eqref{eq:N-def-Th}). Effectively, that is related to the operator obtained by doing the corresponding shift to the operator  \(\tilde{\mathscr N}_h\),
\begin{equation}\label{eq:N-def-Th-t}
  \tilde T_h
  =
  \tilde{\mathscr N}_h-h^{-1}\ell(h).
\end{equation}
Our next task is to verify Assumptions~\ref{ass1} and \ref{ass:symbis} (they will both hold for \(\tilde T_h\) and \(T_h\)) and this will require the construction of certain quasi-modes \((u_{h,i})_{1\leq i\leq n}\).
Let us make the following two observations:

\begin{itemize}
  \item [(i)]
  The  assumption in (\ref{eq:4.1}a)  implies that $\tilde T_h$ commutes with $M(g_n)$,  hence the condition of invariance by rotation holds.
  \item[(ii)]
  For every fixed labeling \(n\),  the eigenfunctions of $\tilde T_h$ corresponding to the \(n\)'th eigenvalue decay exponentially in the (rescaled) tangential variable; more precisely,  given an eigenfunction \(f_n\) of \(\tilde T_h\) with corresponding eigenvalue \(\lambda_n(\tilde T_h\),  there exist positive constants \(C_n,h_n,\alpha_n\) such that 
  \[ 
    \forall\,h\in(0,h_n],\quad \int_{\tilde\Gamma_h}\ee^{\alpha_k\tau}\bigl(|\partial_\tau f_n|^2+|f_n|^2 \bigr)\dd s\dd\tau\leq C_k\int_{\tilde\Gamma_h}|f_n|^2\dd s\dd \tau.
  \]
\end{itemize}

The relevance of (i) above is that we can construct quasi-modes of \(\tilde T_h\) obeying the symmetry invariance properties as in Assumption~\ref{ass:symbis},  and in turn we can use these quasi-modes to produce quasi-modes for the initial operator \(T_h\) in~\eqref{eq:N-def-Th}. The observation in (ii) asserts that the eigenfunctions of \(\tilde T_h\),  once rescaled to the initial tangential variable \(t=h^{1/2}\tau\),  can be ignored in the interior of the domain \(\Omega\).  

The expression of the operator \(\tilde{\mathscr N}_h\) (and hence \(\tilde T_h\)) involves the effective semi-classical parameter \(\hbar\coloneqq h^{1/2}\). This leads us to adjust our setting by working with the  operators
\begin{equation}\label{eq:N-def-Th-bar}
  \mathscr T_\hbar
  =
  \mathscr N_\hbar-\lambda(\hbar)
\end{equation}
and
\begin{multline}\label{eq:tilde-Nh-bar}
  \mathscr N_{\hbar}=-a^{-1}_{\hbar}\partial_\tau  a_{\hbar}\partial_\tau
  +{a}^{-1}_{\hbar}\left(-\ii \hbar\partial_\sigma+{ \hbar^{-1}\gamma_0}-\tau +\hbar \frac{k(\sigma)}{2}\tau^2\right)\\
  \times {a}^{-1}_{\hbar}\left(-\ii \hbar\partial_\sigma+{ \hbar^{-1}\gamma_0}-\tau +\hbar \frac{k(\sigma)}{2}\tau^2\right)
\end{multline}
where
\[
  a_\hbar(\sigma,\tau)
  =
  1-\hbar \tau k(\sigma)
  =
  \tilde a_h(\sigma,\tau),
  \quad 
  \lambda(\hbar)
  =
  h^{-1}\ell(h)\,.
\]

\subsection{The single well problem}

\subsubsection{Definition of the operator}

Let us recall that by~\eqref{eq:4.1a} and~\eqref{eq:4.1b},  we have on the interval \( (-2L/n,2L/n)\) a single  non-degenerate  maximum,  \(s_1=0\), 
of the curvature \(k(s)\).  Let us fix  a positive \(\eta<\min(\frac14,\frac{L}{4n})\) and consider the following set
\[\tilde\Gamma_{\hbar,\eta}^{(1)}=\Bigl(-\frac{2L}{n}+\eta,\frac{2L}{n}-\eta\Bigr)\times (0,\varepsilon_0\hbar^{-1}).\]
Now we consider the operator
\begin{multline}\label{eq:tilde-Nh-bar-1}
  \mathscr N_{\hbar}^{(1)}=-a^{-1}_{\hbar}\partial_\tau  a_{\hbar}\partial_\tau +{a}^{-1}_{\hbar}\left(-\ii \hbar\partial_\sigma+{ \hbar^{-1}\gamma_0}-\tau +\hbar \frac{k(\sigma)}{2}\tau^2\right)\\
  \times {a}^{-1}_{\hbar}\left(-\ii \hbar\partial_\sigma+{ \hbar^{-1}\gamma_0}-\tau +\hbar \frac{k(\sigma)}{2}\tau^2\right)
\end{multline}
with domain 
\[
  \begin{aligned}
    \mathsf{Dom}(\mathscr N_{\hbar}^{(1)})=\{u\in L^2\big(\tilde\Gamma_{\hbar,\eta}^{(1)};d\sigma d\tau\big)~:~&\partial_\tau^2u,\partial_\sigma^2u\in L^2\big(\tilde\Gamma_{\hbar,\eta}^{(1)};d\sigma d\tau\big),\\
    &\partial_\tau u|_{\tau=0}=0,~u|_{\tau=\varepsilon_0h^{-1/2}}=0,~u|_{\sigma=\pm L/4n}=0\}.
  \end{aligned}
\]
We denote by \(\lambda(\hbar)\) the ground state energy of the operator \(\mathscr N_{\hbar}^{(1)}\); it is simple and can be expanded as follows~\cite{FH06,  BHR16}
\begin{subequations}\label{eq:N-WKB-ev}
\begin{equation}\label{eq:N-WKB-ev1} 
  \lambda(\hbar)\underset{\hbar\to0}{\sim}\Theta_0-3C_1k_{\max}\hbar+C_1\Theta_0^{1/4}\sqrt{\frac{3|k_2|}{2}}\hbar^{3/2}+\sum_{j\geq 4}\delta_{1,j}\hbar^{j/2}.
\end{equation}
where \((\delta_{1,j})_{j\geq 1}\) are real constants and \(k_2=k_2''(0)<0\). Moreover there exist real constants  \((\delta_{2,j})_{j\geq 1}\) such that the second eigenvalue satisfies
\begin{equation}\label{eq:N-WKB-ev2} 
  \lambda_2(\hbar)\underset{\hbar\to0}{\sim}\Theta_0-3C_1k_{\max}\hbar+3C_1\Theta_0^{1/4}\sqrt{\frac{3|k_2|}{2}}\hbar^{3/2}+\sum_{j\geq 4}\delta_{2,j}\hbar^{j/2}
\end{equation}
\end{subequations}
and there is a spectral  gap
\[
  \lambda_2(\hbar) -\lambda(\hbar) \underset{\hbar\to0}{\sim}  2 C_1\Theta_0^{1/4}\sqrt{\frac{3|k_2|}{2}}\hbar^{3/2}.
\]
Functions in the domain of \(\mathscr N_{\hbar}^{(1)}\) can be extended to the full half-plane \(\R_+^2=\R\times\R_+\) after  multiplication by a suitable cutoff function.  We could have considered the single well problem in an alternative manner by truncating  the curvature and extending it by \(0\) outside \(\hat\Gamma_{\hbar,\eta}^{(1)}\) and  also the weight function \(a_\hbar\) to get an operator in \(\R^2_+\).  Due to the exponential decay of bound states,  the spectra agree up to exponentially small errors that are negligible compared with the estimate of the tunneling~\cite[Prop.~2.7]{BHR}.

\subsubsection{Approximation of ground states}

The ground states of \(\mathscr N_{\hbar}^{(1)}\) concentrate near the curvature well \(s_1=0\)~\cite[Corol.6.1]{BHR}  and one can expand them in a WKB form.   

Let \(\phi_{\hbar,1}\)  be a normalized ground state of  \(\mathscr N_{\hbar}^{(1)}\).   We introduce the  function
\begin{equation}\label{eq:N-Agmon}
  \Phi_1(\sigma)
  =
  \sqrt{\frac{2C_1}{(\mu_1^N)''(\xi_0)}}\int_{[0,\sigma]}\sqrt{k_{\max}-k(\varsigma)}\dd \varsigma,
\end{equation}
where \([0,\sigma]\) is the segment joining \(0\) to \(\sigma\)  oriented counter-clockwise; in particular \(\int_{[0,\sigma]}=\int_{\sigma}^0\) if \(\sigma< 0\) and    \(\int_{[0,\sigma]}=\int_0^{\sigma}\) if \(\sigma> 0\).
The ground state \(\phi_{\hbar,1}\) is approximated as follows.  Let \(K\subset \bigl(-\frac{L}{2n}+\eta,\frac{L}{2n}-\eta\bigr)\) be a compact interval,  then we have~\cite[Prop.~6.3~\&~Eq.~(2.5)]{BHR}
\begin{subequations}\label{eq:N-WKB}
\begin{equation}\label{eq:N-WKB-a}
  \Bigl\|\langle\tau\rangle\ee^{\Phi_1/\sqrt{\hbar}}\bigl(\phi_{\hbar,1}-\ee^{-\ii\gamma_0\sigma/\hbar^2}\psi_{\hbar,1} \bigr)\Bigr\|_{C^1(K;L^2(\R_+))}=\mathcal O(h^\infty),
\end{equation}
where \(\langle \tau\rangle=(1+|\tau|^2)^{1/2}\) and \(\gamma_0\) is introduced in~\eqref{eq:gamma0}. \\  The function \(\psi_{\hbar,1}\) is defined as follows
\begin{equation}\label{eq:N-WKB-b}
  \psi_{\hbar,1}
  =
  \chi\Psi_{\hbar,1}
\end{equation}
where \(\chi\in C^{\infty}(\R;[0,1])\) satisfies 
\begin{equation}\label{eq:cond-chi-N}
  \chi
  =
  1\text{ on }\Bigl(-\frac{L}{2n}+2\eta,\frac{L}{2n}-2\eta\Bigr),~ {\rm supp}\chi\subset \Bigl(-\frac{L}{2n}+\eta,\frac{L}{2n}-\eta\Bigr)
\end{equation}
and the function \(\Psi_{\hbar,1}\) has the following expansion~\cite[Thm.~2.8]{BHR}
\begin{equation}\label{eq:N-WKB-c}
\ee^{\Phi_1(\sigma)/\hbar^{\frac 12}} \ee^{-\ii \sigma\xi_0/\hbar}\Psi_{\hbar,1}(\sigma,\tau)\underset{\hbar\to0}{\sim}\hbar^{-\frac18} \sum_{j\geq 0} b_{j}(\sigma,\tau) \hbar^{\frac j2},
\end{equation}
where 
\[
  b_{0}(\sigma,\tau)=f_0(\sigma)u_0(\tau)
\]
and the sequence of  functions \((b_j(\sigma,\tau))_{j\geq 1}\) can be constructed by recursion~\cite[Thm.~5.6]{BHR16}.
\end{subequations}

\subsection{Construction of quasi-modes}
Now we can introduce the following quasi-modes for \(\mathscr T_\hbar\) 
\begin{equation}\label{eq:N-t-phi}
  \tilde \phi_{\hbar,1}
  \coloneqq
  \chi\phi_{\hbar,1},
  \quad 
  \tilde\phi_{\hbar,2}
  \coloneqq
  \mathscr M_n\tilde\phi_{\hbar,1},\ldots\quad \tilde\phi_{\hbar,n}
  \coloneqq 
  \mathscr M_n\tilde\phi_{\hbar,n-1},
\end{equation}
where (see~\eqref{eq:def-Mgn-bnd})
\begin{equation}\label{eq:def-Mgn-bnd*}
  \mathscr M_nu(\sigma,\tau)
  =
  u\left(\sigma-\frac{L}{2n},\tau\right).
\end{equation}
Notice that \(\tilde\phi_{\hbar,i}\) is supported in \(\tilde\Gamma_{\hbar,\eta}^{(i)}\) defined as follows
\[ \tilde\Gamma_{\hbar,\eta}^{(i)}=\left( (i-2)\frac{2L}{n}+\eta,i\frac{2L}{n}-\eta\right)\times (0,\varepsilon_0\hbar^{-1/2}).\]
This yields quasi-modes for the operator \(\tilde T_h\) in~\eqref{eq:N-def-Th} obtained from \(\mathscr T_\hbar\) by the change of parameter \(\hbar=h^{1/2}\); more precisely,  we introduce
\begin{equation}\label{eq:N-t-u}
  \tilde u_{h,i}
  =
  \tilde\phi_{\hbar,i}\qquad (\hbar=h^{1/2}, ~ 1\leq i\leq n).
\end{equation}
Notice that by~\eqref{eq:N-WKB-a},
\[
  \ee^{\Phi_2(\sigma,\tau)}\bigl(\tilde u_{\hbar,2}(\sigma,t)- \ee^{2\ii \gamma_0 L/n\hbar}   \ee^{-\ii\gamma_0\sigma/\hbar}\tilde\psi_{\hbar,1}(\sigma-2L/n,\tau)\bigr)=\mathcal O(h^\infty),
\]
and by~\eqref{eq:gamma(s+s1)},
\[
  \begin{aligned}
    \Phi_2(\sigma)
    &=
    \Phi_1\left(\sigma-\frac{2L}{n}\right)\mbox{ on }\tilde\Gamma_{\hbar,\eta}^{(2)}\\
    \Phi_2(\sigma)+\Phi_1(\sigma)
    &=
    \sqrt{\frac{2C_1}{(\mu_1^N)''(\xi_0)}} \int_{\sigma-\frac{2L}{n}}^\sigma \sqrt{k_{\max}-k(\varsigma)}\dd \varsigma\\
    &=
    \mathsf S_n\mbox{ on }\tilde\Gamma_{\hbar,\eta}^{(2)}\cap \tilde\Gamma_{\hbar,\eta}^{(1)}
  \end{aligned}
\]
where \(\mathsf S_n\) is introduced in~\eqref{eq:def-S-N}.

To obtain quasi-modes for the operator \(T_h\) in~\eqref{eq:N-def-Th},   we truncate,  re-scale and pull back the quasi-modes \(\tilde u_{h,i}\) to the Cartesian coordinates via the transformation in~\eqref{eq:def-transf} and finally renormalize. More precisely, we introduce
\begin{equation}\label{eq:N-u}
  u_{h,i}(x)
  =
  h^{-1/4}\chi_0\big(t(x)/h^{1/2}\big) \tilde u_{h,i}\big(s(x),h^{-1/2}t(x)\big)
\end{equation}
where \(\chi_0\in C_c^\infty(\R;[0,1])\) satisfies \({\rm supp}\chi_0\subset(-\varepsilon_0,\varepsilon_0)\) and \(\chi_0=1\) on \([-\varepsilon_0/2,\varepsilon_0/2] \).

\subsection{Estimates of interaction coefficients}

We need to estimate
\[
  J_0(h)=\langle T_h u_{h,1},u_{h,1}\rangle\mbox{ and }J_1(h)=\langle T_hu_{h,1}, u_{h,2}\rangle.
\] 
By the exponential decay of \(u_{h,1}\) and \(u_{h,2}\),  we may write
\[
  \tilde J_0(h)
  =
  h\tilde J_0+\mathcal O(\ee^{-K'/h^{1/4}}),
  \quad 
  J_1(h)
  =
  h\tilde J_1+\mathcal O(\ee^{-K'/h^{1/4}})
\]
where \(K'>\mathsf S_n\) and
\[
  \tilde J_0
  =
  \langle \mathscr T_\hbar \tilde\phi_{\hbar,1},\tilde\phi_{\hbar,1}\rangle,\quad 
  \tilde J_1
  =
  \langle \mathscr T_\hbar \tilde\phi_{\hbar,1},\tilde\phi_{\hbar,2}\rangle  
\]
The term \(\tilde J_0\) is estimated as \( \mathcal O(\ee^{(-2\mathsf S_n+c\eta)/\hbar^{1/2}})\) where \(c\) is a positive constant independent of \(\hbar\) and \(\eta\). We fix now the choice of \(\eta\ll 1\) so that
\(2\mathsf S_n-c\eta>\mathsf S_n\).  

The term \(\tilde J_1\) is calculated as in~\cite[Sec.~7.2.1]{BHR}
\[
  \tilde J_1=  \ee^{2\ii (\gamma_0-\xi_0)L/n\hbar} \ee^{-\mathsf S_n/\hbar^{1/2}}\bigl(\hbar^{5/4} C_* +\mathcal O(\hbar^{7/4})\bigr)
\]
where \(C_*\in \C \setminus \{0\}\) is a constant independent of \(\hbar\). 

Writing \(C_*=|C_*|\ee^{\ii\alpha_0}\) and recalling the definition of \(\Phi_0\) in~\eqref{eq:def-flux-Om} and the relation 
\(\hbar=h^{1/2}\),  we get 
\begin{subequations}\label{eq:N-J}
\begin{equation}\label{eq:N-J1}
  J_1(h)\underset{h\searrow0}{\sim} |C_*|h^{13/8}\ee^{\ii\alpha_0  + 2\ii \Phi_0 /n h^{1/2}}\ee^{-\mathsf S_n/h^{1/4}}
\end{equation}
and 
\begin{equation}\label{eq:N-J0}
  J_0(h)\underset{h\searrow0}{=}o\big(J_1(h)\big).
\end{equation}
\end{subequations}

\subsection{Application when $n=3$.}

Let us assume that \(n=3\).   The functions  \(u_{h,i}\) introduced in~\eqref{eq:N-u} satisfy the conditions in Assumptions~\ref{ass1},   \ref{ass:symbis} (in Remark \ref{remlessabstract}) and \ref{ass-error}.   In  fact,  
\begin{itemize}
  \item
  By~\eqref{eq:N-t-phi} and~\eqref{eq:N-t-u},  we check that the symmetry invariance in Assumption~\ref{ass:symbis} is respected.
  \item The symmetry invariance and~\eqref{eq:N-WKB-a} ensure that \(u_{h,i}\) satisfy  the conditions in Assumption~\ref{ass1} with \(\mathfrak S_1,\mathfrak S_2,\mathfrak S_3\in(0,\mathsf S_n)\) arbitrarily close to \(\mathsf S_n\); we fix the choice so that \(\mathsf S_n<2\min_{1\leq j\leq 3}\mathfrak S_j\).
  \item 
  The estimates in~\eqref{eq:N-J} ensure that Assumption~\ref{ass-error} holds with \(\mathfrak S=\mathsf S_n\). 
\end{itemize}

Applying~\eqref{eq:formn=3},  we get a similar result to Theorem~\ref{thm:HK-3wells} (by following exactly the same argument).  In fact,   there is a relabeling  \(\mu_1(h),\mu_2(h),\mu_3(h)\) of the eigenvalues \(\lambda_1(h),\lambda_2(h),\lambda_3(h)\) of the Neumann magnetic Laplacian \(\mathcal L_h^N\)  with  the asymptotics 
\begin{equation}\label{eq:N-3wells}
  \begin{aligned}
    \mu_2(h)-\mu_1(h)&\underset{h\searrow0}{=} \bigl(\mathsf a(\Phi_0/3\sqrt{h}+\alpha_0)+o(1)\bigr) |C_*|\exp\left(\frac{- \mathsf S_3}{h^{1/2}}\right)\\
    \mu_3(h)-\mu_2(h) &\underset{h\searrow0}{=}\bigl(\mathsf b(\Phi_0/3\sqrt{h}+\alpha_0) +o(1)\bigr) |C_*|\exp\left(\frac{- \mathsf S_3}{h^{1/2}}\right)
  \end{aligned}
\end{equation}
where \(\mathsf a(\cdot)\) and \(\mathsf b(\cdot)\) are introduced~\eqref{eq:def-a-b}.
  
Moreover,  there exists a sequence \(\bigl((h_1(k),h_2(k),h_3(k)\bigr)_{k\geq k_0}\) which converges to \(0\) such that,  for all \(k\geq k_0\) we have
\[
  0<h_1(k+1)<h_3(k)<h_2(k)<h_1(k)<1
\]
and
\[
  \mu_1\big(h_1(k)\big)=\mu_3\big(h_1(k)\big),~  \mu_1\big(h_2(k)\big)
  =
  \mu_2\big(h_2(k)\big),~ \mu_2\big(h_3(k)\big)=\mu_3\big(h_3(k)\big).
\]
In particular,  this finishes the proof of Theorem~\ref{thm:HKS3}.

\section{Magnetic steps}\label{sec.edge}

\subsection{Introduction} 

In this section,  we work in the plane and the Hilbert space is \(H=L^2(\R^2)\).  Consider a positive integer \(n\) and denote by \(g=g_n\) the rotation in \(\R^2\) by \(2\pi/n\). We are then in the setting of Remark~\ref{remlessabstract} with the domain being all of \(\R^2\). Recall the definition of the transformation 
\[
  M(g)u(x)=u(g^{-1}x)\quad (u\in L^2(\R^2)).
\]

Let \(\Omega\subset\R^2\) be an open bounded subset of \(\R^2\) with \(C^\infty\) boundary \(\Gamma\). We assume that \(\Omega\) satisfies the conditions in \eqref{eq:4.1}.

Let \(\Ab:\R^2\to\R^2\) be a vector field such that
\begin{equation}\label{eq:def-ms}
  B\coloneqq \curl\Ab
  =
  \begin{cases}
    1&{\rm on~}\Omega\\
    \vartheta&{\rm on~}\R^2\setminus\overline{\Omega}
  \end{cases}
\end{equation}
where \(-1<\vartheta<0\) is a fixed constant. The magnetic field \(B=B_{\vartheta,\Omega}\) is a step function, hence called a \emph{magnetic step} (\cite{AHK} and references therein). The boundary \(\Gamma\) is the discontinuity curve of the magnetic step, and sometimes is called the \emph{magnetic edge} (\cite{FHK} and references therein).

Consider the Landau Hamiltonian on \(\R^2\)
\begin{equation}\label{eq:def-op-ms}
  \mathscr L_h^B
  =
  (-\ii h\nabla-\Ab)^2
\end{equation}
with semi-classical parameter \(h\) and magnetic field \(B\) as in \eqref{eq:def-ms}.  The symmetry conditions in \eqref{eq:4.1} allow us to prove that the eigenvalues of \(\mathscr L_h^B\) exhibit a braid structure in the semi-classical limit.  The proof and the construction are very similar to those for the Neumann problem in Section~\ref{sec.Neumann} modulo the following slight modifications:

\begin{enumerate}[i)]
  \item 
  Use the model operator on the full real line from \cite{AK20}.
  \item 
  Introduce adapted coordinates on a curved strip defined by the boundary \(\Gamma\),  via the signed normal distance to \(\Gamma\) and the tangential arc-length distance along \(\Gamma\).
  \item 
  Use a modified  single well operator,  with magnetic steps,  analyzed in \cite{AHK}.
  \item 
  Construct quasi-modes and apply the abstract results in Section~\ref{sec.IM}. 
\end{enumerate}

Most of the computations were carried out in \cite{AHK, FHK} so we will be rather succinct and just present the key constructions.

\subsection{Model on the real line}

On \(L^2(\R)\),  consider the family of Schr\"{o}dinger operators,   parameterized by \(\xi\in\R\),  
\begin{equation}\label{eq:ha}
  \mathfrak h_{\vartheta}[\xi]
  =
  -\frac{d^2}{d\tau^2}+V_{\vartheta}(\xi,\tau),
\end{equation}
where $\xi\in\mathbb R$ is a parameter and 
\begin{equation}\label{eq:potential}
  V_{\vartheta}(\xi,\tau)
  =
  \big(\xi+b_{\vartheta}(\tau)\tau\big)^2,\quad 
  b_{\vartheta}(\tau)
  =
  \mathbf{1}_{\mathbb R_+}(\tau)+\vartheta\mathbf{1}_{\mathbb R_-}(\tau)\,.
\end{equation}
We denote by  \(\mu_{\vartheta}(\xi)\) the ground state energy   of $\mathfrak h_{\vartheta}[\xi]$,  and according to \cite{AK20},  we can introduce the constants \(\beta_{\varsigma},\zeta_{\vartheta}\) as follows 
\begin{equation}\label{eq:beta}
  \beta_{\vartheta}
  \coloneqq 
  \inf_{\xi \in \mathbb R} \mu_{\vartheta}(\xi)=\mu_{\vartheta}(\zeta_{\vartheta})\,,
\end{equation}
where $\zeta_{\vartheta}<0$, is the unique minimum of $\mu_{\vartheta}(\cdot)$ and we have \(\mu_{\vartheta}''(\zeta_{\vartheta})>0\).    

Let $\phi_{\vartheta}$ be the positive $L^2$-normalized ground state of $\mathfrak  h_\vartheta[\zeta_{\vartheta}]$.  We introduce the negative constant~\cite{AK20}
\begin{equation}\label{eq:m3}
  M_3(\vartheta)
  =
  \frac 13\Big(\frac 1\vartheta-1\Big)\zeta_{\vartheta}\phi_{\vartheta}(0)\phi_{\vartheta}'(0).
\end{equation} 
\subsection{Adapted coordinates and single well problem}

Recall that \(\gamma:\R/2L\Z\to\R^2\) is a counter-clockwise oriented  arc-length parameterization of the curve \(\Gamma\) and that, for  \(s\in \R/2L\Z\),  \(\nu(s)\) is the unit normal to \(\Gamma\) at \(\gamma(s)\) which points to the interior of the \(\Gamma\). We can adjust the coordinates   \((s,t)\) introduced in \eqref{eq:def-transf},   by allowing \(t\) to have negative values.  We therefore set
\[
  \mathscr T(s,t)\coloneqq \gamma(s)-t\nu(s)\qquad ((s,t)\in \R/2L\Z\times \R).
\]
We introduce the curved strip 
\[
  \hat\Gamma(\varepsilon_0)=\{x\in\R^2~:~{\rm dist}(x,\Gamma)<\varepsilon_0\}
\]
and we choose \(\varepsilon_0>0\) so that \(\mathscr T\) is one-to-one on \(\R/2L\Z\times (-\varepsilon_0,\varepsilon_0)\) and that \(\hat\Gamma(\varepsilon_0)\) is the image of \(\R/2L\Z\times (-\varepsilon_0,\varepsilon_0)\) by \(\mathscr T\).

We assign to a function \(u\) defined on \(\hat\Gamma(\varepsilon_0)\) the function \(\tilde u=u\circ \mathscr T\) defined on \(\R/2L\Z\times (-\varepsilon_0,\varepsilon_0)\). The Hilbert space \(L^2(\hat\Gamma(\varepsilon_0))\) is then transformed to the weighted space  
\[
  L^2\big((\mathbb R/ 2L \mathbb Z) \times(-\varepsilon_0,\varepsilon_0); a\dd s\dd t\big),\quad  a(s,t)=1-tk(s)
\]
and the action of the transformation \(M(g)\) is still given by \eqref{eq:def-Mgn-bnd}.

Note that the action of $\mathscr L_h^B$ on \(\hat\Gamma(\varepsilon_0)\) is  transformed to  (after,  possibly,   a gauge transformation)
\begin{multline*}
  \tilde{\mathscr L}_h^B\coloneqq -h^2  a^{-1}\partial_t   a\partial_t
  + a^{-1}\left(-\ii h\partial_s+\gamma_0-b_{\vartheta}(t)t+\frac{k(s)}{2}b_{\vartheta}(t)t^2\right)\\
   \times a^{-1}\left(-\ii h\partial_s+\gamma_0-b_{\vartheta}(t)t+\frac{k(s)}{2}b_{\vartheta}(t)t^2\right)
\end{multline*}
where  the constant \(\gamma_0\) and the function \(b_{\vartheta}\) are  introduced in \eqref{eq:gamma0} and  \eqref{eq:potential} respectively.

We do the scaling \((\sigma,\tau)=(s,h^{-1/2}t)\) and introduce the effective semi-classical parameter \(\hbar=h^{1/2}\).  We obtain the operator
\begin{multline}\label{eq:tilde-Nh-bar-ms}
\mathscr N_{\hbar,\vartheta}=- a^{-1}_{\hbar}\partial_\tau   a_{\hbar}\partial_\tau
+  {a}^{-1}_{\hbar}\left(-\ii \hbar\partial_\sigma+{ \hbar^{-1}\gamma_0}-b_{\vartheta}\tau +\hbar \frac{k(\sigma)}{2}b_{\vartheta}\tau^2\right)\\
   \times {a}^{-1}_{\hbar}\left(-\ii \hbar\partial_\sigma+{ \hbar^{-1}\gamma_0}-b_{\vartheta}\tau +\hbar \frac{k(\sigma)}{2}b_{\vartheta}\tau^2\right)
\end{multline}
where\footnote{The function \(b_{\vartheta}\) now depends on the variable \(\tau\).}
\[
  a_\hbar(\sigma,\tau)=1-\hbar \tau k(\sigma).
\]
Our single well problem is defined as follows.  Fix a positive  \(\eta<\min(\frac14,\frac{L}{4n})\) and consider  
\[
  \hat\Gamma_{\hbar,\eta}^{(1)}=\Bigl(-\frac{2L}{n}+\eta,\frac{2L}{n}-\eta\Bigr)\times (-\varepsilon_0\hbar^{-1},\varepsilon_0\hbar^{-1}).
\]
We introduce the operator on \(L^2\big(\hat\Gamma_{\hbar,\eta}^{(1)}; a_{\hbar}\dd\sigma \dd\tau\big)\),
\begin{equation}\label{eq:tilde-Nh-bar-1-ms}
  \mathscr N_{\hbar,\vartheta}^{(1)}=\mathscr N_{\hbar,\vartheta}
\end{equation}
with domain 
\[
  \begin{aligned}
    \mathsf{Dom}(\mathscr N_{\hbar,\vartheta}^{(1)})=\{u\in L^2\big(\hat\Gamma_{\hbar,\eta}^{(1)};\dd\sigma \dd\tau\big)~:~&\partial_\tau^2u,\partial_\sigma^2u\in L^2\big(\hat\Gamma_{\hbar,\eta}^{(1)};\dd\sigma \dd\tau\big),\\
    &u|_{\tau=\pm\varepsilon_0h^{-1/2}}=0,~u|_{\sigma=\pm L/4n}=0\}.
  \end{aligned}
\]
We denote by \(\lambda(\hbar)\) the ground state energy of the operator \(\mathscr N_{\hbar,\vartheta}^{(1)}\); it is simple and can be expanded as follows~\cite{AHK, FHK}
\begin{equation}\label{eq:N-WKB-ev1-ms} 
  \lambda(\hbar)\underset{\hbar\to0}{\sim}\beta_{\varsigma}+M_3(\vartheta)k_{\max}\hbar+\sqrt{\frac{k_2M_3(\vartheta)\mu_{\vartheta}''(\zeta_{\vartheta})}{4} }\hbar^{3/2},
\end{equation}
and the splitting between \(\lambda(\hbar)\) and the second eigenvalue \(\lambda_2(\hbar)\)  is estimated as follows
\[
  \lambda_2(\hbar)-\lambda(\hbar)\underset{\hbar\to0}{\sim}\sqrt{\frac{k_2M_3(\vartheta)\mu_{\vartheta}''(\zeta_{\vartheta})}{4} }\hbar^{3/2}.
\]

\subsection{Quasi-modes and application}

In order to apply the results in Section~\ref{sec.IM} we introduce the operator
\[
  \mathcal T_h=\mathscr L_h^B-\ell(h)
\]
and construct suitable quasi-modes.  With \(\hbar=h^{1/2}\),  we choose \(\ell(h)=h\lambda(\hbar)\),  where \(\lambda(\hbar)\) is the ground state energy of the single well operator \(\mathscr N_{\hbar,\vartheta}^{(1)}\) introduced in \eqref{eq:tilde-Nh-bar-1-ms}.

Let \(\phi_{\hbar,1}^\vartheta\) be a normalized ground state of \(\mathscr N_{\hbar,\vartheta}^{(1)}\) and choose  \(\chi\in C^{\infty}(\R;[0,1])\) satisfying \eqref{eq:cond-chi-N}.   We introduce the quasi-modes
\begin{equation}\label{eq:N-t-phi-ms}
  \tilde \phi_{\hbar,1}^\vartheta\coloneqq \chi\phi_{\hbar,1}^\vartheta,\quad \tilde\phi_{\hbar,2}^\vartheta\coloneqq \mathscr M_n\tilde\phi_{\hbar,1}^\vartheta,\ldots\quad \tilde\phi_{\hbar,n}^\vartheta\coloneqq \mathscr M_n\tilde\phi_{\hbar,n-1}^\vartheta,
\end{equation}
where \(\mathscr M_n\) is introduced in \eqref{eq:def-Mgn-bnd*}.  We obtain quasi-modes for the operator \(T_h\) by  truncation,  re-scaling and pulling back to Cartesian coordinates; more precisely we introduce 
\begin{equation}\label{eq:N-u-ms}
  u_{h,i}^\vartheta(x)= h^{-1/4}\chi_0\big(t(x)/h^{1/2}\big) \tilde u_{h,i}^\vartheta\big(s(x),h^{-1/2}t(x)\big)
\end{equation}
where \(\chi_0\in C_c^\infty(\R;[0,1])\) is as in \eqref{eq:N-u} and \(\tilde u_{h,i}^\vartheta=\tilde\phi_{\hbar,i}^\vartheta\), with \(\hbar=h^{1/2}\).

We move then to the calculation of the following two terms
\[
  J_0(h)=\langle T_h u_{h,1},u_{h,1}\rangle\mbox{ and }J_1(h)=\langle T_hu_{h,1}, u_{h,2}\rangle.
\] 
That is essentially done in \cite{FHK}.  We introduce the following `distance'
\[
  \mathsf S_n(\vartheta)=\sqrt{\frac{ -2M_3(\vartheta)}{\mu_{\vartheta}''(\zeta_{\vartheta})}}\int_0^{2L/n}\sqrt{k_{\max} -k(s)}\dd s
\]
and the flux like term
\[
  \Phi_\vartheta=|\Omega|-2L\zeta_{\vartheta}.
\]
We have 
\[
  J_0(h)= \mathcal O(\ee^{(-2\mathsf S_n(\vartheta)+c\eta)/h^{1/4}})
\] 
where \(c\) is a positive constant independent of \(\hbar\) and \(\eta\),  and
\[
  \tilde J_1(h)=  \ee^{2\ii \Phi_\vartheta/n\hbar} \ee^{-\mathsf S_n(\vartheta)/h^{1/4}}\bigl( C_*(\vartheta)h^{13/8} +\mathcal O(h^{15/8})\bigr)
\]
where \(C_*(\vartheta)\in \C \setminus \{0\}\) is a constant independent of \(\hbar\). 

We fix now the choice of \(\eta\ll 1\) so that
\(2\mathsf S_n(\vartheta)-c\eta>\mathsf S_n(\vartheta)\) and we write \(C_*(\vartheta)=|C_*(\vartheta)|\ee^{\ii\alpha_0(\vartheta)}\).   We get 
\begin{subequations}\label{eq:N-J-ms}
\begin{equation}\label{eq:N-J1-ms}
  J_1(h)\underset{h\searrow0}{\sim} |C_*(\vartheta)|h^{13/8}\ee^{\ii\alpha_0 (\vartheta) + 2\ii \Phi_\vartheta/n h^{1/2}}\ee^{-\mathsf S_n(\vartheta)/h^{1/4}}
\end{equation}
and 
\begin{equation}\label{eq:N-J0-ms}
  J_0(h)\underset{h\searrow0}{=}o\big(J_1(h)\big).
\end{equation}
\end{subequations}

Let us now assume that \(n=3\),  which corresponds to the setting in Theorem~\ref{thm:HKS4}.     The functions  \(u_{h,i}^\vartheta\) introduced in~\eqref{eq:N-u} satisfy the conditions in Assumptions~\ref{ass1},   \ref{ass:symbis} (in Remark \ref{remlessabstract}) and \ref{ass-error}. Applying~\eqref{eq:formn=3},  we get   a relabeling  \(\mu_1(h),\mu_2(h),\mu_3(h)\) of the eigenvalues \(\lambda_1(h),\lambda_2(h),\lambda_3(h)\) of the Landau hamiltonian  \(\mathscr L_h^B\)  with  the asymptotics 
\[  
  \begin{aligned}
    \mu_2(h)-\mu_1(h)
    &
    \underset{h\searrow0}{=} \bigl(\mathsf a(\Phi_\vartheta/3\sqrt{h}+\alpha_0(\vartheta))+o(1)\bigr) |C_*(\vartheta)|\exp\left(\frac{- \mathsf S_3(\vartheta)}{h^{1/2}}\right)\\
    \mu_3(h)-\mu_2(h) 
    &
    \underset{h\searrow0}{=}\bigl(\mathsf b(\Phi_\vartheta/3\sqrt{h}+\alpha_0(\vartheta)) +o(1)\bigr) |C_*(\vartheta)|\exp\left(\frac{- \mathsf S_3(\vartheta)}{h^{1/2}}\right)
  \end{aligned}
\]
where \(\mathsf a(\cdot)\) and \(\mathsf b(\cdot)\) are introduced~\eqref{eq:def-a-b}.
  
Moreover, there exists a sequence \(\bigl((h_1(k),h_2(k),h_3(k)\bigr)_{k\geq k_0}\) which converges to \(0\) such that,  for all \(k\geq k_0\) we have
\[
  0<h_1(k+1)<h_3(k)<h_2(k)<h_1(k)<1
\]
and
\[
  \mu_1\big(h_1(k)\big)=\mu_3\big(h_1(k)\big),~  \mu_1\big(h_2(k)\big)
  =
  \mu_2\big(h_2(k)\big),~ \mu_2\big(h_3(k)\big)=\mu_3\big(h_3(k)\big).
\]
In particular, this finishes the proof of Theorem~\ref{thm:HKS4}.

\section*{Acknowledgments} 

The authors take the opportunity to thank the Knut and Alice foundation (grant KAW 2021.0259) for the possibility to host A.\ Kachmar in Lund for six months (1 Sep 2022 - 28 Feb 2023) and for supporting the visit of B. Helffer to Lund in January 2023.

\appendix

\section{Minimizing the function \(\Psi(r,t)\).}\label{sec:A}

Let \(\Psi\) be the function introduced in  \eqref{eq:def-Psi}.  Using the inequality \(\sqrt{t(t+1)}\leq t+\frac12\) we get
\begin{equation}\label{eq:A1}
  \Psi(r,t)\geq d(r)+\frac{|v_0^{\min}|}{2}\ln\Bigl(1+\frac1t\Bigr)+\frac{(L-r)^2}{2}\left(t+\frac12\right).
\end{equation}
Consequently,  the minimum of \(\Psi(r,t)\) over \([a,L-a]\times\overline\R_+\) is attained at \((r_0,t_0)\in [a,L-a]\times\R_+\). 
By \cite[Remark~6.8]{HK22},
\begin{equation}\label{eq:A2}
  \inf_{(r,t)\in[a,L-a]\times\R_+}{\Psi (r,t)} \geq \inf\left\{\inf_{(r,t)\in [0,a]\times\R_+}\Psi(r,t),\inf_{t\in\R_+}\Psi(L-a,t) \right\}.
\end{equation}
Recalling \(F(v_0)\) from \eqref{eq:def-Psi},   we have by \cite[Eq.~(6.18) and Prop.~6.5]{HK22},  %
\begin{equation}\label{eq:A3}
  -F(v_0)+\inf_{(r,t)\in [0,a]\times\R_+}\Psi(r,t)= S(v_0,L)\geq S_a
\end{equation}
where  \(S(v_0,L)\mbox{ and }S_a\) are introduced in \eqref{eq:defSs}.

The function 
\[
  G(t)
  := 
  \frac{|v_0^{\min}|}{2}\ln\Bigl(1+\frac1t\Bigr)+\frac{a^2}{2}\left(t+\frac12\right)
\]
has a unique minimum on \(\R_+\),
\[
  t_*
  =
  \sqrt{\frac14+\frac{|v_0^{\min}|}{a^2}}-\frac12.
\]
By \eqref{eq:A1} (for \(r=L-a\)),  we get
\[
  \begin{aligned}
    \inf_{t\in\R_+}\Psi(L-a,t)
    &
    \geq d(L-a)+G(t_*)\\
    &
    =
    d(L-a)+\frac{a}{4}\sqrt{a^2+4|v_0^{\min}|}+\frac{|v_0^{\min}|}{2}\ln \frac{ \left(\sqrt{a^2+4|v_0^{\min}|} +a\right)^2}{4|v_0^{\min}|}.
  \end{aligned}
\]
Consequently,
\begin{equation}\label{eq:A4}
  -F(v_0)+\inf_{t\in\R_+}\Psi(L-a,t)\geq d(L-a)+d(a)=S_a.
\end{equation}
Collecting \eqref{eq:A3} and \eqref{eq:A4}, we infer from \eqref{eq:A2} that
\[
  -F(v_0)+\inf_{(r,t)\in[a,L-a]\times\R_+}\Psi(r,t)\geq S_a.
\]
\subsection*{Conflict of interest} The authors have no conflict of interest to declare.


\begin{thebibliography}{99}

\bibitem{Al} K. Abou Alfa.
\newblock Tunneling effect in two dimensions with vanishing magnetic fields.
\newblock Preprint arXiv:2212.04289 (2022).

\bibitem{AK20}
W.~Assaad and A.~Kachmar.
\newblock Lowest energy band function for magnetic steps.
\newblock {\em J. Spectr. Theory}
\newblock
 12(2):  813--833 (2022).
	
\bibitem{AHK} W. Assaad, B. Helffer, and A. Kachmar.   Semi-classical eigenvalue estimates under magnetic steps.  arXiv:2108.03964 (2021). To appear in Analysis and PDE.

\bibitem{BDMV} V. Bonnaillie-No$\rm\ddot{e}$l,  M. Dauge,  D. Martin,  G.  Vial.  Numerical computations
of fundamental eigenstates for the Schr$\rm\ddot{o}$dinger operator under
constant magnetic field.   {\it Comput.  Methods Appl.  Mech.   Engng.} 196:
3841--3858 (2007).

\bibitem{BHR} V. Bonnaillie-No\"{e}l, F. H\'erau, and N. Raymond. Purely magnetic tunneling effect in two dimensions. {\em Invent.  Math.}  227(2): 745--793 (2022).

\bibitem{BHR16} V. Bonnaillie-No\"{e}l, F. H\'erau, and N. Raymond.    \newblock
Magnetic WKB constructions.  \newblock Arch. Ration. Mech. Anal.  221(2):   817--891 (2016).

\bibitem{FSW} C.  Fefferman, J. Shapiro, M. Weinstein.
Lower bound on quantum tunneling for strong magnetic fields. {\it 
SIAM J. Math. Anal.} 54(1), 1105--1130 (2022). (see also arXiv:2006.08025v3).

\bibitem{FH-b} S. Fournais and B. Helffer. Spectral methods in surface superconductivity. Progress in Nonlinear Differential Equations and Their Applications 77. Basel: Birkh\"auser (2010).

\bibitem{FH06}  S. Fournais and B. Helffer.  \newblock Accurate eigenvalue asymptotics for the magnetic Neumann Laplacian.\newblock Ann.  Inst.   Fourier 56(1):  1--67 (2006).

\bibitem{FHK} S. Fournais, B. Helffer, A. Kachmar.
\newblock Tunneling effect induced by a curved magnetic edge. 
\newblock
R.L. Frank (ed.) et al., The physics and mathematics of Elliott Lieb. The 90-th anniversary. Volume I. Berlin: European Mathematical Society (EMS).   315--350 (2022).

\bibitem{FHKR} S.  Fournais, B.  Helffer,  A. Kachmar,  N. Raymond.  Effective operators on an attractive magnetic edge. 
Journal de l'\'Ecole polytechnique — Math\'ematiques,  Vol.  10: 917--944 (2023).

\bibitem{Ha} E. Harrell.
\newblock Double wells.
\newblock Comm. Math. Phys. 75: 239--261 (1980).

\bibitem{He88} B. Helffer. Semi-classical analysis for the Schr\"odinger operator and applications. Lecture Notes in Mathematics, Vol.~1336, Berlin : Springer-Verlag  (1988).

\bibitem{HK22} B. Helffer, A. Kachmar. Quantum tunneling in  deep potential wells and  strong magnetic field revisited.  ArXiv preprint: arXiv:2208.13030v4 (2022).

\bibitem{HeSj1} B. Helffer and J. Sj$\ddot{\rm o}$strand.
\newblock Multiple wells in the semi-classical limit I.
\newblock Comm. in PDE, vol.9: 337--408 (1984).

\bibitem{HeSj2} B. Helffer and J. Sj$\ddot{\rm o}$strand.
\newblock Puits multiples en limite semi-classique. II Interaction mol\'eculaire. Sym\'etries. Perturbation.
\newblock Ann. IHP, Section A 42(2): 127--212 (1985). 

\bibitem{HeSjPise} B. Helffer and J. Sj$\ddot{\rm o}$strand.   Effet tunnel pour l'\'equation de Schr$\ddot{\rm o}$dinger avec champ magn\'etique. \newblock
Ann. Sc. Norm.  Super. Pisa,  Cl. Sci., IV. Ser. 14(4),   625--657 (1987).

\bibitem{HHHO} B. Helffer, M.  Hoffmann-Ostenhof,  T.  Hoffmann-Ostenhof,  M.P.  Owen.
Nodal sets for ground states of Schr\"odinger operators with zero magnetic field in non simply connected domains. 
Commun.  Math.  Phys.  202(3): 629--649 (1999).

\bibitem{KR} A. Kachmar,  N. Raymond.
\newblock   Tunnel effect in a shrinking shell
enlacing a magnetic field. 
\newblock  {\it Rev.  Mat.  Iberoam.} 35(7):  2053--2070 (2019).

\bibitem{Si1} B. Simon.
\newblock  Semiclassical analysis of low lying eigenvalues, I. Non-degenerate minima: Asymptotic expansions, \newblock Ann. Inst. H. Poincar\'e 38:   295--307 (1983).

\bibitem{Si2} B. Simon.
\newblock Semiclassical analysis of low lying eigenvalues, II. Tunneling, 
\newblock Ann. of Math. 120:  89--118 (1984).
\end{thebibliography}
\end{document}